\newcommand\revred{}
\newcommand\revblue{}
\numberwithin{equation}{section}
\newtheorem{theorem}{Theorem}[section]
\newtheorem{proposition}[theorem]{Proposition}
\newtheorem{corollary}[theorem]{Corollary}
\newtheorem{lemma}[theorem]{Lemma}
\newtheorem{remark}[theorem]{Remark}
\renewcommand{\paragraph}[1]{\medskip\noindent \textbf{#1.} \,}
\newcommand{\s}{\sigma}
\renewcommand{\t}{\tau}
\newcommand{\wt}[1]{\widetilde{#1}}
\newcommand{\NN}{\mathbb{N}}
\newcommand{\RR}{\mathbb{R}}
\newcommand{\CC}{\mathbb{C}}
\newcommand{\dd}{\mathrm{d}}
\newcommand{\ee}{\mathrm{e}}
\newcommand{\ii}{\mathrm{i}}
\newcommand{\Ah}{A_h}
\newcommand{\Bh}{B_h}
\newcommand{\Bt}{\widehat{B}}
\newcommand{\Bth}{\Bt_h}
\newcommand{\DP}{\Delta^{-1}}
\newcommand{\DPh}{\Delta^{-1}_h}
\newcommand{\nC}{\mathcal{C}}
\newcommand{\EF}{\varphi_{\text{\tiny{SP}}}}
\newcommand{\EA}{\varphi_A}
\newcommand{\EAh}{\varphi_{A_h}}
\newcommand{\EB}{\varphi_B}
\newcommand{\EBh}{\varphi_{B_h}}
\newcommand\eA{\EA(\half \t)}
\newcommand\eAh{\EAh(\half \t)}
\newcommand\eb{\mathcal{E}_B}
\newcommand\ebh{\mathcal{E}_{B_h}}
\newcommand\ebt[1]{\mathcal{E}_B(\t,{#1})}
\newcommand\ebht[1]{\mathcal{E}_{B_h}(\t,{#1})}
\newcommand\ebhtee[1]{\mathcal{E}_{B_h}(t,{#1})}
\newcommand{\nS}{\mathcal{S}}
\newcommand{\nnS}[1]{\mathcal{S}^{#1}\,}
\newcommand{\nnSh}[1]{\mathcal{S}_h^{#1}\,}
\newcommand{\Ih}{\mathcal{I}_h}
\newcommand{\Ph}{\mathcal{P}_h}
\newcommand{\nV}{\mathcal{V}}
\newcommand{\nVh}{\nV^h}
\newcommand\wu{\wt{u}}
\newcommand{\ini}{\psi_0}
\newcommand{\sgn}{\mathrm{sgn}}
\newcommand\half{\tfrac{1}{2}}
\newcommand{\scp}[3]{(#2,#3)_{#1}}
\newcommand{\norm}[2]{\|#2\|_{#1}}
\newcommand\ol[1]{\overline{#1}}
\newcommand{\const}{{\mathcal C}}
\newcommand{\nO}{\mathcal{O}}
\newcommand{\nP}{\mathcal{P}}
\newcommand{\pot}{\Theta}
\newcommand{\potm}{\Phi}
\begin{document}
\title{Convergence of a Strang splitting finite element discretization for the Schr{\"o}dinger-Poisson equation}%
\thanks{This work was supported by the Austrian Science Fund (FWF) under grants P24157-N13 and P21620-N13, the Vienna Science and Technology Fund (WWTF) under the grant MA14-002 and the Doktoratsstipendium of the University of Innsbruck.}
\author{Winfried~Auzinger}
\address{Technische Universit{\"a}t Wien, Institut für Analysis und Scientific Computing, Wiedner Hauptstra{\ss}e \mbox{8-10}, A-1040 Wien, Austria.} \author{Thomas~Kassebacher}
\address{Leopold-Franzens Universit{\"a}t Innsbruck, Institut f{\"u}r Mathematik, Technikerstra{\ss}e 13, A-6020 Innsbruck, Austria.}
\author{Othmar~Koch}
\address{Universit{\"a}t Wien, Fakult{\"a}t f{\"u}r Mathematik, Oskar-Morgenstern-Platz~1, A-1090 Wien, Austria.}
\author{Mechthild~Thalhammer}
\sameaddress{2}
\begin{abstract}
Operator splitting methods combined with finite element spatial discretizations are studied for time-dependent nonlinear  Schrödinger equations.
In particular, the Schrödinger--Poisson equation under homogeneous Dirichlet boundary conditions on a finite domain is considered.
A rigorous stability and error analysis is carried out for the second-order Strang splitting method and conforming polynomial finite element discretizations.
For sufficiently regular solutions the classical orders of convergence are retained, that is,
second-order convergence in time and polynomial convergence in space is proven.
The established convergence result is confirmed and complemented by numerical illustrations.
\end{abstract}
\subjclass{65J15, 
65L05, 
65M60 
65M12, 
65M15} 
\keywords{Nonlinear Schr{\"o}dinger equations, Operator splitting methods, Finite element discretization, Stability, Local error, Convergence}
\maketitle

\section{Introduction and overview} \label{sec:intro}
We consider full discretization methods for the time-dependent Schr{\"o}dinger--Poisson equation,  which typically arises in models of quantum transport~\cite{bremar91,illneretal94}.
Our approach relies on a second-order Strang splitting time discretization combined with a conforming $ hp $ finite element space discretization.
The motivation for the proposed solution method is that separate treatment of the nonlinear part suggests the application of special solvers for the Poisson equation, which are particularly efficient in the context of an underlying finite element space discretization.
For this purpose it is common to truncate the unbounded spatial domain to a sufficiently large finite domain
and impose homogeneous Dirichlet boundary conditions.
{\revred Indeed, the evaluation of the nonlocal convolution integral in the standard formulation generally implies
a huge computational effort caused by the suitable treatment of the singular integral
kernel for the evaluation on a large domain. By the splitting approach, we can separately treat the Poisson equation by appropriate methods where optimized linear solvers are available as for instance multigrid
or domain decomposition methods~\cite{olstyr14,saad03}. The finite element discretization additionally
enables a solution on a solution-adapted non-uniform spatial grid, which can be updated in the course
of the time integration~\cite{schiesser91}.}

Our main objective is to provide an error analysis for this full discretization, showing the expected second-order convergence of the Strang splitting method and polynomial spatial error decay corresponding with the finite elements employed.
By using Gauss--Lobatto nodes, the setup of the stiffness matrix is exact;
the errors arising in the construction of the mass matrix and the right-hand side are of higher order than the discretization error and therefore will not be taken into account.

\paragraph{Splitting methods}
The computational advantages of operator splitting methods for the time integration of problems in quantum dynamics have been emphasized in recent literature.
A comprehensive overview of investigations for time-dependent Gross--Pitaevskii equations is given in~\cite{baoetal13a}, which summarizes most of the studies conducted in this field.
The Crank-Nicholson finite difference method preserves most of the important invariants like symmetry in time, mass and energy and is unconditionally stable;
however, the computational cost for this fully implicit method is considerable, and the conservation properties only hold up to the accuracy of the nonlinear solver.
Semi-implicit relaxation methods which only treat the kinetic part implicitly share the conservation properties if only a cubic nonlinearity is present, but they are still computationally expensive and suffer from stability limitations.
Semi-implicit finite difference schemes lose most of the desired properties.
For regular solutions, time-splitting methods in conjunction with Fourier- or Sine-spectral methods are overall concluded to be the most successful discretization schemes;
they are unconditionally stable, conserve norm, energy, and also dispersion, which is not the case for many other time-stepping schemes.
For non-smooth or random spatial profiles, the spectral accuracy may be lost, however, and thus splitting methods in conjunction with finite difference spatial discretizations may be more efficient (see~\cite{baobecreview13}).

Recently, full discretization of the Schr{\"o}dinger--Poisson equation by splitting methods in conjunction with spectral space discretization has been investigated in~\cite{bao15speq}, where the long-range interaction is approximated efficiently by nonuniform fast Fourier transform (NUFFT).
The authors conclude superior accuracy and performance of their approach in particular over the Sine-spectral method.

\paragraph{Error analysis}
The stability and error behavior of operator splitting methods for the Schr{\"o}dinger--Poisson equation have first been analyzed in~\cite{lubich07}.
For the structurally similar equations associated with the multi-configuration time-dependent Hartree--Fock method, a complete convergence analysis of high-order splitting methods has been given in~\cite{knth10a}.
An error analysis of splitting methods applied to the Schr{\"o}dinger--Poisson equation in the semiclassical regime is provided in~\cite{carles13}.

\paragraph{Finite element method}
The literature on finite element spatial discretizations is vast.
Finite element methods (FEM) have been widely used for electronic structure calculations, see for instance~\cite{gargri00,schauer14} and~\cite{baodft12,fangetal12,motamarrietal12,zhouetal06}. For the solution of time-dependent Schr{\"o}dinger equations see for example~\cite{bandrauk95,makridakis96} and the more recent contribution~\cite{katkyz15}, and for atomic and molecular systems see~\cite{HocBon2014} for a general review.

\paragraph{Truncation to a finite domain}
In conjunction with the application of the finite element method, the restriction to a finite domain introduces a truncation error which we do not consider in this work.
Strategies to cope with related issues have been proposed for instance in~\cite{antoinebesseklein12}.
The investigation in the context of the Schr{\"o}dinger--Poisson equation remains an open question.

\paragraph{Outline}
{In Sec.~\ref{sec:setting} we state the Schr{\"o}dinger--Poisson equation.
We specify the full discretization method and formulate our
main convergence results.
In Sec.~\ref{sec:conv-ana} we provide
the underlying comprehensive stability and error analysis.
Our numerical illustrations given in Sec.~\ref{sec:num} confirm the theoretical convergence result and demonstrate that also higher-order splitting methods show their expected behavior.
The appendices contain proof details, important results from the literature
which we rely on and auxiliary estimates used in our analysis.
}

\section{Problem setting, discretization method,
         {\revred and main results}} \label{sec:setting}
\subsection{Problem setting}
\paragraph{Schr{\"o}dinger-Poisson equation}
We consider the time-dependent Schr{\"o}dinger-Poisson equation for
$\psi\colon \Omega \times [0, T] \rightarrow \CC,\,(x,t) \mapsto \psi(x,t)$,
\begin{subequations}
\label{eq:SPE}
\begin{equation}
\ii\,\partial_t \psi(x,t) = - \tfrac {1}{2}\,\Delta \psi(x,t) + \DP(|\psi(x,t)|^2)\,\psi(x,t)\,, 
\end{equation}
where $\Omega \subset \RR^d$, $d \in \{2,3\}$,
is a bounded domain with smooth boundary.

We impose homogeneous Dirichlet boundary conditions and an initial condition
\begin{equation}
\psi(x,t)\big|_{x\in \partial \Omega} = 0\,, \quad \psi(x,0) = \ini(x)\,.
\end{equation}
For the subsequent analysis we will assume that the initial state
satisfies\footnote{For simplicity of notation we write $ L^2, H^k $ instead of
                   $ L^2(\Omega), H^k(\Omega) $, etc.}
$\ini \in H^2 = H^2(\Omega) $.
The nonlocal nonlinear term $\DP(|\psi|^2)$ describing the electrostatic self-interaction
is the solution $ \pot $ of the Poisson equation under homogeneous Dirichlet boundary conditions,
\begin{equation}
\Delta \pot(x,t) = |\psi(x,t)|^2\,, \quad \pot(x,t)\big|_{x\in \partial \Omega} = 0\,.
\end{equation}
\end{subequations}
The evolution operator associated with problem~\eqref{eq:SPE} will be denoted by $ \EF $, i.e.,
\begin{equation*}
\psi(\,\cdot\,,t) = \EF(t,\psi_0)\,.
\end{equation*}
\paragraph{Abstract formulation}
Introducing the operator notation
\begin{subequations}
\label{eq:operator_notation}
\begin{equation}
\label{eq:AB}
\begin{split}
&A\colon\, H^2 \cap H^1_0 \to L^2\colon~~ u \,\mapsto\,\tfrac{1}{2}\,\ii\,\Delta u\,, \\
&\Bt\colon\, H^1_0 \to H^2 \cap H^1_0\colon~~ w \,\mapsto\, -\,\ii\,\DP(|w|^2)\,, \\
&B\colon\, H^1_0 \to H^1_0\colon~~~~~~~~\; u \,\mapsto\, -\,\ii\,\DP(|u|^2)\,u\,,
\end{split}
\end{equation}
we employ a compact formulation of problem~\eqref{eq:SPE} as an abstract evolution equation
\begin{equation}
\label{eq:SPEproblem}
\begin{cases}
~\partial_t \psi =A\,\psi + B(\psi) = A\,\psi + \Bt(\psi)\psi\,, \\
~\psi\big|_{t=0} =\ini\,.
\end{cases}
\end{equation}
\end{subequations}
\subsection{Semidiscretization in time by the Strang splitting method}
\paragraph{Subproblems}
For the discretization of~\eqref{eq:SPEproblem} in time
we apply exponential operator splitting methods based on the solution of two subproblems,
see for instance~\cite{haireretal02,macqui02}.
\begin{itemize}
\item
The evolution operator associated with the linear initial value problem
\begin{subequations} \label{eq:EA-all}
\begin{equation}
\label{eq:EA-problem}
\begin{cases}
~\partial_t \psi = A\,\psi\,, \\
~\psi\big|_{t=0} = u\,,
\end{cases}
\end{equation}
is denoted by $ \EA(t) $, such that
\begin{equation}
\label{eq:EA-def}
\psi(\,\cdot\,,t) = \EA(t)\,u\,.
\end{equation}
\end{subequations}
\item
The evolution operator associated with the nonlinear initial value problem
\begin{subequations} \label{eq:EB-all}
\begin{equation}
\label{eq:EB-problem}
\begin{cases}
~\partial_t \psi = B(\psi)\,, \\
~\psi\big|_{t=0} = u\,,
\end{cases}
\end{equation}
is denoted by $ \EB(t,\,\cdot\,) $, such that
\begin{equation}
\label{eq:EB-def}
\psi(\,\cdot\,,t) = \EB(t,u)\,.
\end{equation}
Due to the fact that $\DP(|\psi(\,\cdot\,,t)|^2)$ defines a real-valued function
and thus
\begin{equation*}
\partial_t |\psi(\,\cdot\,,t)|^2
= 2\,\Re\big(\overline{\psi(\,\cdot\,,t)}\,\partial_t \psi(\,\cdot\,,t)\big) = 2\,\Re\big(\Bt(\psi(\,\cdot\,,t))\,|\psi(\,\cdot\,,t)|^2\big) = 0\,,
\end{equation*}
the nonlinear equation~\eqref{eq:EB-problem} reduces to the linear equation
\begin{equation}
\label{eq:EB-problem-linear}
\begin{cases}
~\partial_t \psi = \Bt(u)\psi\,,  \\
~\psi\big|_{t=0} = u\,.
\end{cases}
\end{equation}
\end{subequations}
We will also employ a notation analogous to~\eqref{eq:EB-problem-linear}
but with a linear evolution operator $ \eb $ depending on $ u $ and $w$ as the solution to
\begin{equation}
\label{eq:eb-problem-linear}
\begin{cases}
~\partial_t \psi = \Bt(w)\psi\,,  \\
~\psi\big|_{t=0} = u\,,
\end{cases}
\end{equation}
such that $\psi=\eb(t,w)\,u$. Clearly,
\begin{equation}
\label{eq:EB}
\EB(t,u)=\eb(t,u)\, u \,.
\end{equation}
\end{itemize}

\paragraph{Strang splitting method}
Our main focus is on the symmetric second-order Strang splitting method applied to the splitting
according to~\eqref{eq:EA-all}, \eqref{eq:EB-all}.
That is, for a time increment $\tau > 0$, the time-discrete solution values
\begin{equation*}
\psi_n \approx \psi(n \t)\,, \quad n=0,1,2,\ldots
\end{equation*}
are determined by the recurrence
\begin{subequations}
\label{eq:Strang-splitting}
\begin{equation}
\label{eq:Strang-splitting-1}
\psi_n = \nS(\t,\psi_{n-1}) = \EA(\half \t)\,\EB\big(\t,\EA(\half \t) \psi_{n-1}\big)\,.
\end{equation}
For notational simplicity we shall employ a formal notation for the $n$-fold composition,
\begin{equation}
\label{eq:Strang-splitting-2}
\psi_n = \nnS{n}\ini := \underbrace{\nS(\t,\,\cdot\,) \circ \cdots \circ \nS(\t,\ini)}_{n \text{ times}}\,.
\end{equation}
\end{subequations}

\paragraph{Weak formulation of the subproblems}
In view of full discretization (see Sec.~\ref{sec:conforming-FEM}) we consider the following weak formulations of the subproblems.
For~\eqref{eq:EA-problem},
\begin{equation}
\label{eq:SubproblemWeak-A}
\begin{cases}
~\scp{L^2}{\partial_t \psi}{\phi} = -\,\tfrac{1}{2}\,\ii\,\scp{L^2}{\nabla\psi}{\nabla\phi}
 \quad \text{for all} ~~ \phi \in H^1_0\,,  \\
~\psi\big|_{t=0} = u\,,
\end{cases}
\end{equation}
where we require $\psi, u \in H^1_0$.
\begin{subequations}
\label{eq:SubproblemWeak-B}
For~\eqref{eq:EB-problem-linear},
\begin{equation}
\label{eq:EB-weak-problem}
\begin{cases}
~\scp{L^2}{\partial_t \psi}{\phi} = -\,\ii\,\scp{L^2}{\pot\,\psi}{\phi}
  \quad \text{for all} ~~ \phi \in H^1_0\,,  \\
~\psi\big|_{t=0} = u\,,
\end{cases}
\end{equation}
where $\pot$ is the solution of the Poisson equation in weak formulation,
\begin{equation}
\label{eq:EB-poisson-problem}
\scp{L^2}{\nabla\pot}{\nabla\chi} = -\,\scp{L^2}{|u|^2}{\chi}
\quad \text{for all} ~~ \chi \in H^1_0\,,
\end{equation}
\end{subequations}
requiring $\psi,\pot, u \in H^1_0$.

{\revred In the following we use the standard denotation
for the Sobolev semi-norms,
i.e., $|\psi|_{H^1} = \| \nabla\psi \|_{L^2} $
for $ \psi \in H_0^1 $, and
$ |\psi|_{H^2} = \big( \sum_{|\alpha|=2}
  \|D^\alpha \psi\|_{L^2}^2 \big)^{1/2} $
for $ \psi \in H^2 $.}

\subsection{Conforming finite element discretization of the subproblems} \label{sec:conforming-FEM}
A full discretization arises by solving both initial value subproblems~\eqref{eq:EA-all}
and~\eqref{eq:EB-all} in their weak reformulation~\eqref{eq:SubproblemWeak-A} and~\eqref{eq:SubproblemWeak-B},
respectively, by means of a finite element method (FEM).

\paragraph{Finite element space}
For the space discretization of the subproblems, we choose a tessellation~$\mathcal{T}^h$
over subdomains $ \Omega_k $, with
\begin{equation*}
\Omega = \bigcup_{k=1}^{K}\,\Omega_k\,, \qquad
h = \max_{k\,\in\,\{1,\dots,K\}} \mathrm{diam}\,\Omega_k\,,
\end{equation*}
which are affine-equivalent to a reference domain~$\Omega_0$.
With~$\Omega_0$ we associate a triplet $(\Omega_0,P,\mathcal{N})$,
where the set~$\mathcal{N}$ comprises the interpolation nodes~$x_i$,
and~$P$ is the linear space spanned by the
polynomial nodal basis functions $ v_j $ of degree~$p$.
We require the finite elements to be conforming and quasi-uniform.
As common we choose a linear indexing of the basis functions, $(v_j)_{j=1}^{J}$.
The subspace spanned by these functions is denoted by
\begin{equation}
\label{eq:Vbasis-vj}
\nVh = \mathrm{span} \{v_1, \dots, v_J\} \subset H^1_0\,.
\end{equation}

\paragraph{Finite element interpolation and projection}
By $\Ih\colon\,\nC(\Omega) \to \nVh$ we denote the nodal interpolation operator,
\begin{equation}
\label{eq:Ih-def}
\Ih(f) = \sum_{j=1}^{J} f(x_j)\,v_j\,.
\end{equation}
The Rayleigh-Ritz projection $\Ph\colon\, H_0^1 \to \nVh$ is defined implicitly by
the Galerkin orthogonality relation
\begin{subequations}
\begin{equation}
\label{eq:Ph-def}
\scp{L^2}{\nabla (u - \Ph\,u)}{\nabla v_h} = 0 \quad \text{for all} ~~ v_h\,\in\,\nVh\,,
\end{equation}
satisfying
\begin{equation}
|\Ph\,u|_{H^1} \leq |u|_{H^1} \quad \text{for all} ~~ u \in H_{0}^1\,.
\end{equation}
By the Poincar{\'e} inequality {\revred $\|v\|_{L^2}\leq C\,|v|_{H^1}$}, this also implies
\begin{equation}
\label{eq:Proj-prop}
\| \Ph\,u \|_{H^1} \leq C\,\| u \|_{H^1} \quad \text{for all} ~~ u \in H_{0}^1\,,
\end{equation}
with a constant $ C $ depending on $\Omega$.
\end{subequations}

\begin{remark} \label{rem:Ph}
{\rm{
The Rayleigh-Ritz projection $ \Ph $ is connected to {\revred the} finite element approximation in
the following way.
Consider a Poisson problem $ \Delta u = f $ with homogeneous Dirichlet boundary conditions in weak formulation
(see~\eqref{eq:EB-poisson-problem}),
\begin{equation*}
\scp{L^2}{\nabla u}{\nabla v} = -\,\scp{L^2}{f}{v}
\quad \text{for all} ~~ v \in H^1_0\,,
\end{equation*}
and its FEM discretization by the Galerkin equations (see~\eqref{eq:EBh-poisson-problem} below),
\begin{equation*}
\scp{L^2}{\nabla u_h}{\nabla v_h} = -\,\scp{L^2}{f}{v_h}
\quad \text{for all} ~~ v_h \in \nVh\,.
\end{equation*}
Then,
\begin{equation*}
\scp{L^2}{\nabla (u - u_h)}{\nabla v_h} = 0 \quad \text{for all} ~~ v_h\,\in\,\nVh\,,
\end{equation*}
i.e.,
\begin{equation*}
u_h = \Ph\,u\,.
\end{equation*}
}}
\end{remark}
{\revred For a sufficiently smooth boundary,
the $ H^2 $ regularity estimate
\begin{equation}
\label{eq:Omega-property}
|\DP(u-\Ph u)|_{H^2}  \leq C\, \| u-\Ph u\|_{L^2}
\end{equation}
holds, see~\cite[Sec.~5.5]{brennerscott02}.}

\paragraph{Fully discrete solution and computational representation}
The full discretization of the Schr{\"o}dinger-Poisson equation is based on
solving the subproblems~\eqref{eq:SubproblemWeak-A}, \eqref{eq:SubproblemWeak-B}
arising in the Strang splitting time discretization by means of a FEM/Galerkin space discretization.
Here, the coefficients associated with the prescribed initial state are
determined by interpolation,
\begin{equation*}
\Ih(\ini) = \sum_{j=1}^{J} c_{j,0}\,v_j \in \nVh\,.
\end{equation*}
In each substep of the time propagation by Strang splitting,
subproblems of the following types~\eqref{eq:DiscreteSubproblemWeak-A}, \eqref{eq:DiscreteSubproblemWeak-B}
for the solutions $\psi_h\in \nVh$ and $\pot_h\in \nVh$ are solved.
\begin{itemize}
\item
\begin{subequations}
\label{eq:DiscreteSubproblemWeak-A}
For the first subproblem~\eqref{eq:SubproblemWeak-A}, $\psi_h$ is determined from
\begin{equation}
\label{eq:EAh-weak-problem}
\begin{cases}
~\scp{L^2}{\partial_t \psi_h}{\phi_h} = -\,\tfrac{1}{2}\,\ii\,\scp{L^2}{\nabla\psi_h}{\nabla\phi_h}
\quad \text{for all} ~~ \phi_h \in \nVh\,,  \\
~\psi_h\big|_{t=0} = u_h\,.
\end{cases}
\end{equation}
With the ansatz in terms of the basis~\eqref{eq:Vbasis-vj},
\begin{equation*}
\psi_h(t) = \sum_{j=1}^{J} c_j(t)\,v_j\,,
\end{equation*}
\eqref{eq:EAh-weak-problem} yields the Galerkin equations for the coefficients $ c_j(t) $ in the form
\begin{equation}
\label{eq:EAh-weak-problem-coeff}
\sum_{j=1}^{J} \partial_t\,c_j(t)\,\scp{L^2}{v_j}{v_i} = -\,\tfrac{1}{2}\,\ii\,\sum_{j=1}^{J} c_j(t)\,\scp{L^2}{\nabla v_j}{\nabla v_i}\,,
\quad i=1 \ldots J\,.
\end{equation}
\end{subequations}
\item
For the second subproblem~\eqref{eq:SubproblemWeak-B}, $ \psi_h $ is determined such that
\begin{subequations}
\label{eq:DiscreteSubproblemWeak-B}
\begin{equation}
\label{eq:EBh-weak-problem}
\begin{cases}
~\scp{L^2}{\partial_t \psi_h}{\phi_h} = -\,\ii\,\scp{L^2}{\pot_h\,\psi_h}{\phi_h}
\quad \text{for all} ~~ \phi_h \in \nVh\,,  \\
~\psi_h\big|_{t=0} = u_h\,,
\end{cases}
\end{equation}
where $\pot_h$ is the solution of the discretized Poisson problem
\begin{equation}
\label{eq:EBh-poisson-problem}
\scp{L^2}{\nabla\pot_h}{\nabla\chi_h} = -\,\scp{L^2}{|u_h|^2}{\chi_h}
\quad \text{for all} ~~ \chi_h \in \nVh\,.
\end{equation}
With the ansatz in terms of the basis~\eqref{eq:Vbasis-vj},
\begin{equation*}
\psi_h(t) = \sum_{j=1}^{J} c_j(t)\,v_j \in \nVh\,, \qquad \pot_h = \sum_{j=1}^{J} d_j\,v_j \in \nVh\,,
\end{equation*}
we obtain the Galerkin equations~\eqref{eq:EBh-poisson-problem} in the form
\begin{equation}
\label{eq:EBh-weak-problem-coeff}
\begin{split}
\sum_{j=1}^{J} \partial_t\,c_j(t)\,\scp{L^2}{v_j}{v_i}
&= -\,\ii\,\sum_{j=1}^{J} c_j(t)\,\sum_{k=1}^{J} d_k\,\scp{L^2}{ v_k\, v_j}{v_i}, \quad i=1 \dots J\,, \\
\sum_{j=1}^{J} d_j\,\scp{L^2}{\nabla v_j}{\nabla v_i}
&= -\!\sum_{j,k=1}^{J} c_j(0)\,\overline{c_k(0)}\,\scp{L^2}{v_j\,v_k}{v_i}\,, \quad i=1 \dots J\,.
\end{split}
\end{equation}
\end{subequations}
\end{itemize}
{\revred The above computations imply that the unknown coefficients
$c_j,\ d_j$ satisfy systems
of linear ordinary differential equations.}
For a more compact formulation we introduce the vectors
\begin{subequations}
\label{eq:Matrix-definition}
\begin{equation}
\begin{gathered}
c(t) = \big( c_j(t) \big)_{j=1}^{J} \in \CC^{J}\,, \qquad d = \big( d_j \big)_{j=1}^{J} \in \RR^{J}\,, \\
F(c(0)) = \Big(\sum_{j,k=1}^{J} c_j(0)\,\overline{c_k(0)}\,\scp{L^2}{v_j\,v_k}{v_i}\Big)_{i=1}^{J} \in \RR^{J}\,,
\end{gathered}
\end{equation}
and the invertible symmetric matrices
\begin{equation}
\begin{aligned}
M &= \big( M_{ij} \big)_{i,j=1}^{J} \in \RR^{J \times J}\,, \quad \text{with} ~~ M_{ij} = \scp{L^2}{v_i}{v_j}\,, \\
K &= \big( K_{ij} \big)_{i,j=1}^{J} \in \RR^{J \times J}\,, \quad \text{with} ~~ K_{ij} = \scp{L^2}{\nabla v_i}{\nabla v_j}\,, \\
\potm(d) &= \big( \potm_{ij}(d) \big)_{i,j=1}^{J} \in \RR^{J \times J}\,,
\quad \text{with} ~~ \potm_{ij}(d) =\sum_k d_k \scp{L^2}{v_k\,v_i}{v_j}\,.
\end{aligned}
\end{equation}
\end{subequations}
In this notation, the system~\eqref{eq:EAh-weak-problem-coeff} reads
\begin{equation}
\label{eq:EAh-problem}
\begin{cases}
~M\,\partial_t\,c(t) &= -\,\tfrac{1}{2}\,\ii\,K c(t)\,, \\
~c(0) & \text{given}\,,
\end{cases}
\end{equation}
with solution
\begin{equation}
c(t) = \ee^{-\,\frac{1}{2}\,\ii\,t\,M^{-1} K}\,c(0)\,.
\end{equation}
System~\eqref{eq:EBh-weak-problem-coeff} takes the form
\begin{equation}
\label{eq:EBh-problem}
\begin{cases}
~K d& = -\,F(c(0)) \,, \\
~M\,\partial_t\,c(t) &= -\,\ii\,\potm(d)\, c(t)\,, \\
~c(0) & \text{given}\,,
\end{cases}
\end{equation}
with solution
\begin{equation}
c(t) = \ee^{-\,\ii\,t\,(M^{-1} \potm(d))}\,c(0)\,, \text{ where}~~d=-K^{-1}F(c(0))\,.
\end{equation}
To realize the fully discrete propagation in time
according to the Strang recurrence~\eqref{eq:Strang-splitting}, systems of this type are alternately solved.

\paragraph{Finite element operators}
We define the discrete Laplace operator
$\Delta_h\colon\,\nVh \to H^{-1}$ ($ u_h \mapsto \Delta_h\,u_h $)
and its inverse $\Delta_h^{-1}\colon\,H^{-1} \to \nVh$ ($ f \mapsto \Delta_h^{-1} f $) via
\begin{subequations}
\label{eq:Deltah+i-Def}
\begin{align}
& \scp{L^2}{\Delta_h\,u_h}{v_h} = -\,\scp{L^2}{\nabla u_h}{\nabla v_h} \quad \text{for all} ~~ v_h \in \nV^h\,, \label{eq:Deltah-Def} \\
& \scp{L^2}{\nabla\,\Delta_h^{-1} f}{\nabla v_h} = -\,\scp{L^2}{f}{v_h} \quad~\, \text{for all} ~~ v_h \in \nV^h\,. \label{eq:Deltahi-Def}
\end{align}
\end{subequations}
In particular, \eqref{eq:Deltahi-Def} means that for the solution $ u $ of $ \Delta\,u = f $ we have
\begin{equation}
\label{eq:Ph-Deltahi}
\Delta_h^{-1} f = \nP_h\,u = u_h
\end{equation}
in the sense of Remark~\ref{rem:Ph}.

Moreover, in analogy to~\eqref{eq:AB} we set
\begin{equation}
\label{eq:AhBh}
\begin{split}
&\Ah\colon\,\nVh \to H^{-1}\colon~~ u_h \,\mapsto\,\tfrac{1}{2}\,\ii\,\Delta_h u_h\,, \\
&\Bth\colon\,\nVh \to \nVh\colon~~ w_h \,\mapsto\, -\,\ii\,\DPh(|w_h|^2)\,, \\
&\Bh\colon\,\nVh \to \nVh\colon~~ u_h \,\mapsto\, -\,\ii\,\DPh(|u_h|^2)\,u_h\,.
\end{split}
\end{equation}
In this notation,
\begin{itemize}
\item $\EAh(t)\, u_h$ is associated with subproblem~\eqref{eq:EAh-weak-problem},
\item $\EBh(t,u_h)$ is associated with subproblem~\eqref{eq:EBh-weak-problem}.
\end{itemize}
For representing the solution $\psi_h $ of a system of the type
\begin{equation}
\label{eq:ebh-weak-problem}
\begin{cases}
~\scp{L^2}{\partial_t \psi_h}{\phi_h} = \scp{L^2}{\Bth(w_h)\, \psi_h}{\phi_h}
\quad \text{for all} ~~ \phi_h \in \nVh\,,  \\
~\psi_h\big|_{t=0} = u_h\,,
\end{cases}
\end{equation}
we will also employ an analogous notation as for problem~\eqref{eq:eb-problem-linear},
\begin{subequations}
\begin{equation}
\psi_h= \ebh(t,w_h)\,u_h \,.
\end{equation}
Then, analogously as in~\eqref{eq:EB},
\begin{equation}
\label{eq:def-ebh}
\EBh(t,u_h)= \ebh(t,u_h)\,u_h  \,.
\end{equation}
\end{subequations}
For the resulting fully discrete Strang splitting solution we again write
\begin{equation*}
\psi_n \approx \psi(n \t)\,, \quad n=0,1,2,\ldots,
\end{equation*}
determined by the recurrence
\begin{subequations}
\label{eq:Strang-splitting-h}
\begin{equation}
\label{eq:Strang-splitting-h-1}
\psi_n = \nS_h(\t,\psi_{n-1}) = \EAh(\half \t)\,\EBh\big(\t,\EAh(\half \t)\,\psi_{n-1}\big)\,,
\end{equation}
and we again employ a formal notation for the $n$-fold composition,
\begin{equation}
\label{eq:Strang-splitting-h-2}
\psi_n = \nnSh{n} \ini := \underbrace{\nS_h(\t,\,\cdot\,) \circ \cdots \circ \nS_h(\t,\ini)}_{n \text{ times}}\,.
\end{equation}
\end{subequations}

{\revblue
\subsection{Main results} \label{sec:Outline}
{The central interest of this paper is to establish a convergence result for the
splitting finite element discretization of the Schr{\"o}dinger--Poisson equation
\eqref{eq:SPE}. Here we give a brief overview of the structure
of our convergence proof and state the resulting theorem.
The detailed convergence analysis is worked out in Sec.~\ref{sec:conv-ana}.}

In order to study the global error $\psi_n- \psi(t_n)$
we separate the terms associated with space and time discretization, respectively.
With $\psi_n=\nnSh{n} \Ih\,\ini$ and $\psi(t_n)=\EF(t_n,\ini)$, we write
\begin{equation}
\label{eq:global-parts-overview}
\psi_n - \psi(t_n) = \big( \nnSh{n}\Ih\,\ini - \nnS{n}\ini \big)
                     + \big( \nnS{n}\ini - \EF(t_n,\ini) \big)\,.
\end{equation}
The first term represents the error attributable to the space discretization and the second term is the splitting error
at the semi-discrete level.
\begin{itemize}
\item
The first term in~\eqref{eq:global-parts-overview} is expanded into a telescoping sum
in the following way:
\begin{subequations}
\begin{align}
\label{eq:global-space-overview}
\qquad\qquad
\nnSh{n} \Ih\,\ini - \nnS{n} \ini &=
\nnSh{n} (\Ih-\Ph)\ini
+ (\nnSh{n} \Ph - \Ph\,\nnS{n})\ini
+ (\Ph\,-\mathrm{Id})\,\nnS{n}\ini \notag \\
&= \nnSh{n} (\Ih-\Ph)\ini
+ \sum_{j=1}^{n} \nnSh{n-j}(\nnSh{}\Ph  - \Ph\,\nS)\,\nnS{j-1} \ini
+ (\Ph\,-\mathrm{Id})\,\nnS{n}\ini\,.
\end{align}
We combine a stability argument for the fully discrete splitting operator $\nS_h$
(see Sec.~\ref{sec:S-stab}) with the approximation properties of the finite-element interpolants
(see Theorem~\ref{thm:Brenner-Scott1})
and the Rayleigh-Ritz-projection $\Ph$ (see Theorem~\ref{thm:Brenner-Scott-Ph-conv}).
What remains to be estimated are terms of the form $(\nnSh{}\Ph - \Ph\,\nS) u$,
which is worked out in Sec.~\ref{sec:interpS-err}
(see Theorem~\ref{thm:semi-discrete-error-space})
\item
The second term in~\eqref{eq:global-parts-overview} can similarly be recast as
\begin{equation}
\label{eq:global-time-overview}
\nnS{n} \ini - \EF(t_n,\ini) =
\sum_{j=1}^{n} \nnS{n-j}(\nnS{} \EF(t_{j-1},\ini) - \EF(\t,\EF(t_{j-1},\ini))\,.
\end{equation}
\end{subequations}
Here apply a standard argument for estimating the splitting error
at the semi-discrete level combining the stability of the
splitting operator $\nS$ (see Sec.~\ref{sec:S-stab})
with an estimate for the local splitting error ${\nS(\t,\psi) - \EF(\t,\psi)}$
(see Theorem~\ref{thm:Lubich-conv} or~\cite{lubich07}).
\end{itemize}
This leads to the following global error bound for the full discretization.
\begin{theorem}
	\label{thm:convergence-complete}
	Suppose that $\psi\in H^\ell$, $\ell\geq 4$ and that $\Omega$
    is such that~\eqref{eq:Omega-property} holds.
    Consider the fully discretized method from~\eqref{eq:Strang-splitting-h-1}
    based on the Strang splitting scheme and conforming finite elements of degree $p$, then
	\begin{align}
		\label{eq:Global-error-FEM}
		\begin{aligned}
			\|\psi_n - \psi(t_n)\|_{L^2} & \leq C\, t_n\big(\t^2 + h^{s}\,(1+\tfrac{1}{\t})^\beta\big)\,, \\
            \|\psi_n - \psi(t_n)\|_{H^1} & \leq C\, t_n\big(\t + h^{s-1}\,(1+\tfrac{1}{\t})^\beta\big)\,,
		\end{aligned}
	\end{align}
	where $s=\min\{\ell,p+1\}$ and $\beta=\max\{0,\sgn(p+3-\ell)\}$.
    Here, $C$ depends on $\Omega$, $d$, the $H^4$- and the $H^{s+2(1-\beta)}$-norms of $\psi$. 
\end{theorem}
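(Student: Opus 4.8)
The plan is to exploit the error decomposition~\eqref{eq:global-parts-overview}, splitting $\psi_n-\psi(t_n)$ into the space error $\nnSh{n}\Ih\ini-\nnS{n}\ini$ and the temporal splitting error $\nnS{n}\ini-\EF(t_n,\ini)$, and to estimate each by a Lady Windermere's fan argument: a telescoping decomposition that pairs stability of the relevant propagator with a local one-step error bound. I expect the temporal part to contribute the $\t^2$ term and the spatial part the $h^{s}$, $h^{s-1}$ terms, the anomalous factor $(1+\tfrac{1}{\t})^\beta$ appearing only when the solution is too rough to extract an extra power of $\t$ from the local spatial error. The embedding $H^2\hookrightarrow L^\infty$ for $d\in\{2,3\}$, which makes the Poisson nonlinearity locally Lipschitz on the relevant Sobolev spaces, is used throughout and accounts for the dependence of $C$ on $d$.

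For the temporal part I would telescope as in~\eqref{eq:global-time-overview}, bound the amplification factors $\nnS{n-j}$ by the $L^2$- and $H^1$-stability of the semi-discrete splitting operator $\nS$ (Sec.~\ref{sec:S-stab}), and insert the local splitting error for Strang splitting applied to~\eqref{eq:SPEproblem} from Theorem~\ref{thm:Lubich-conv} (cf.~\cite{lubich07}), which for $\psi\in H^4$ is $\nO(\t^{3})$ in $L^2$ and $\nO(\t^{2})$ in $H^1$ along the exact flow. Summing $n=t_n/\t$ contributions yields $\|\nnS{n}\ini-\EF(t_n,\ini)\|_{L^2}\le C\,t_n\,\t^{2}$ and $\nO(t_n\t)$ in $H^1$, with $C$ depending on $\Omega$, $d$, $\|\psi\|_{H^4}$; as a by-product the semi-discrete iterates $\nnS{j}\ini$ stay uniformly bounded in the Sobolev norms of $\psi$, which is used below.

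For the spatial part I would use the refined telescoping identity~\eqref{eq:global-space-overview}. The term $\nnSh{n}(\Ih-\Ph)\ini$ is controlled by the $h$-uniform stability of $\nS_h$ and $\|(\Ih-\mathrm{Id})\ini\|+\|(\mathrm{Id}-\Ph)\ini\|\le C\,h^{s}$ (Theorems~\ref{thm:Brenner-Scott1}, \ref{thm:Brenner-Scott-Ph-conv}), and $(\Ph-\mathrm{Id})\nnS{n}\ini$ by the projection error applied to the regular semi-discrete solution. For the central sum $\sum_{j=1}^{n}\nnSh{n-j}(\nnSh{}\Ph-\Ph\,\nS)\nnS{j-1}\ini$ I would use stability of $\nS_h$ for the $\nnSh{n-j}$ factors and Theorem~\ref{thm:semi-discrete-error-space} for each local term: when $\ell\ge p+3$ (so $\beta=0$) this term is $\nO(\t\,h^{s})$ at the cost of $\|\psi\|_{H^{s+2}}$, and summing $n=t_n/\t$ of them gives $C\,t_n\,h^{s}$; when $\ell<p+3$ (so $\beta=1$) only $\nO(h^{s})$ is available, and the sum produces $C\,t_n\,h^{s}/\t$. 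The two cases combine to $C\,t_n\,h^{s}(1+\tfrac{1}{\t})^\beta$, and the $H^1$ estimate follows the same pattern with one power of $h$ lost throughout.

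The main obstacle is Theorem~\ref{thm:semi-discrete-error-space}, the bound on $(\nnSh{}\Ph-\Ph\,\nS)u$. I would derive it by writing $\nS_h\Ph-\Ph\,\nS$ as a sum of commutator-type differences obtained by inserting intermediate states: the linear-Schr\"odinger terms $\EAh(\tfrac{\t}{2})\Ph-\Ph\,\EA(\tfrac{\t}{2})$ at the outer and inner half-steps, and the nonlinear term $\EBh(\t,\cdot)\Ph-\Ph\,\EB(\t,\cdot)$ in the middle. The linear terms are handled by the classical finite-element error analysis for the free Schr\"odinger flow via the Rayleigh--Ritz projection; the nonlinear term requires combining the $H^{2}$-regularity estimate~\eqref{eq:Omega-property} for $\DP-\DPh\Ph$, a local Lipschitz bound for $w\mapsto\Bt(w)$ and $w_h\mapsto\Bth(w_h)$, and a Gr\"onwall argument for the non-autonomous linear flows $\eb$, $\ebh$ from~\eqref{eq:eb-problem-linear} and~\eqref{eq:ebh-weak-problem}. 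The extra power of $\t$ in the regular case is produced by expanding these differences to first order in $\t$ and absorbing the remainder into two more spatial derivatives, which is why the constant then depends on $\|\psi\|_{H^{s+2(1-\beta)}}$; in the rough case this expansion is foregone. A second delicate point, used everywhere, is the $h$-uniform stability of $\nS_h$ (and of $\nS$): since the nonlinearity is only locally Lipschitz, one must first propagate an a~priori bound on $\|\psi_n\|_{H^1}$ (and $H^2$), exploiting that both discrete subflows preserve $\|\cdot\|_{L^2}$ and are bounded on bounded sets, before any Gr\"onwall estimate can be closed. Combining the temporal and spatial bounds yields~\eqref{eq:Global-error-FEM}.
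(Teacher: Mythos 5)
Your proposal is correct and follows essentially the same route as the paper: the decomposition~\eqref{eq:global-parts-overview} into a spatial part (handled by a Lady Windermere's fan combining $\nS_h$-stability, the interpolation/projection errors, and the one-step consistency of $\nS_h\Ph-\Ph\,\nS$, with the a~priori $H^1$-bound on the discrete iterates established by induction) and a temporal part (handled by stability of $\nS$ plus the local Strang splitting error of~\cite{lubich07}). The only slip is a labelling one: the one-step bound on $(\nnSh{}\Ph-\Ph\,\nS)u$ that you identify as the main obstacle is Theorem~\ref{thm:interpS-err}, whereas Theorem~\ref{thm:semi-discrete-error-space} is the already-telescoped global spatial estimate; the mathematical content you describe for it matches the paper's argument.
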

The proof of Theorem~\ref{thm:convergence-complete} is based on the combination of Theorem~\ref{thm:semi-discrete-error-space}
for the contribution $h^{s}(1+\tfrac{1}{\t})^\beta$ and on Theorem~\ref{thm:Lubich-conv} for the contribution of $\t^2$.

\paragraph{Conclusions} From Theorem~\ref{thm:convergence-complete}, we can deduce the following convergence properties:
\begin{itemize}
	\item For an initial value $\ini\in H^{p+3}$, we obtain the classical convergence order in $\t$ and $h$,
	\begin{equation*}
        \|\psi_n - \psi(t_n)\|_{L^2} \leq C\, t_n\big(\t^2 + h^{p+1}\big)\,, \qquad
		\|\psi_n - \psi(t_n)\|_{H^1} \leq C\, t_n\big(\t + h^{p}\big)\,.
	\end{equation*}
	\item
    {\revred For an initial value $\ini\in H^\ell$, $\ell<p+3$
    we obtain convergence of order $\nO(h^{s})$
    respectively $\nO(h^{s-1})$ in space, but
    with a possibly reduced convergence order in time
    (depending in the ratio between $ \tau $ and $ h $),
	\begin{equation*}
        \|\psi_n - \psi(t_n)\|_{L^2} \leq C\, t_n\big(\t^2 + \tfrac{h^{s}}{\t} \big)\,, \qquad
		\|\psi_n - \psi(t_n)\|_{H^1} \leq C\, t_n\big(\t + \tfrac{h^{s-1}}{\t} \big)\,,
	\end{equation*}
	where $s=\min\{\ell,p+1\}$.}
\end{itemize}
}


\section{Convergence analysis}
\label{sec:conv-ana}

\subsection{Global error bound}
\label{sec:full-semi-split}
We start by separating the effects of space and time discretization,
see~\eqref{eq:global-space-overview} and~\eqref{eq:global-time-overview},
and consider bounds in the $H^1$- and $L^2$-norm.

By a Lady Windermere's fan argument and the stability estimates
from Sec.~\ref{sec:S-stab}, the expression
$\| \nnSh{n}\Ih\,\ini - \nnS{n}\ini\|$ in~\eqref{eq:global-space-overview}
can be expressed by an $h^s$-bound in $L^2$ and an $h^{s-1}$-bound in $H^1$, as shown in the following Theorem~\ref{thm:semi-discrete-error-space}.
The norms $\| \nnS{n}\ini - \EF(t_n,\ini)\|$ have already been studied in~\cite{lubich07}
and are summarized in Theorem~\ref{thm:Lubich-conv} below.
{\revred
This implies the main convergence result stated in
Theorem~\ref{thm:convergence-complete},
where error bounds depending on the regularity
of the initial values are given.}

In our convergence theory we make use of several stability estimates and consistency results which are collected
in Sec.~\ref{sec:S-stab}--\ref{sec:regularity-full} below.
{\revred Several auxiliary results and estimates
are collected in the appendix.}

\begin{theorem}
	\label{thm:semi-discrete-error-space}
	Let $\ini\in H^{\ell}$ for $\ell\geq 4$, $\max\limits_{1 \leq m \leq n}\|\nnS{m}\ini\|_{H^s}\leq M_s$,
    and $\max\limits_{1 \leq m \leq n}(\|\nnSh{m}\Ph\,\nnS{n-m} u\|_{H^1})\leq a_{\nS_h}$.
	Then for $s=\min\{p+1,\ell\}$, the $L^2$ and $H^1$ bounds of the semi-discrete error
	$ \nnSh{n} \Ih\,\ini - \nnS{n} \ini$ can be bounded in $L^2$ and $H^1$:
	\begin{align*}
		\| \nnSh{n}\Ih\,\ini - \nnS{n}\ini\|_{L^2}
		&\leq C\, t_n\, h^s\,(1+\tfrac{1}{\t})^\beta\,, \\
		\| \nnSh{n} \Ih\,\ini - \nnS{n}\ini\|_{H^1}
		&\leq  C\, t_n\, h^{s-1}\,(1+\tfrac{1}{\t})^\beta\,,
	\end{align*}
	where $\beta=\max\{0,\sgn(p+3-\ell)\}$ and $C$ depends on $d$, $\Omega$, $t_n$, $a_{\nS_h}$ and $M_{s+2(1-\beta)}$.
\end{theorem}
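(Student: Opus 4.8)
The plan is to estimate the telescoping decomposition~\eqref{eq:global-space-overview} term by term, relying on a Lady Windermere's fan (discrete Gronwall) argument. Write
\[
\nnSh{n}\Ih\,\ini - \nnS{n}\ini
= \underbrace{\nnSh{n}(\Ih-\Ph)\ini}_{(\mathrm{I})}
+ \underbrace{\sum_{j=1}^{n}\nnSh{n-j}(\nnSh{}\Ph - \Ph\,\nS)\,\nnS{j-1}\ini}_{(\mathrm{II})}
+ \underbrace{(\Ph - \mathrm{Id})\,\nnS{n}\ini}_{(\mathrm{III})}\,.
\]
For (I) and (III) I would invoke the approximation properties of $\Ih$ and $\Ph$ (Theorems~\ref{thm:Brenner-Scott1} and~\ref{thm:Brenner-Scott-Ph-conv}): since $\ini,\nnS{n}\ini \in H^s$ with $s = \min\{p+1,\ell\}$, both $(\Ih-\Ph)\ini$ and $(\Ph-\mathrm{Id})\nnS{n}\ini$ are $\Order(h^s)$ in $L^2$ and $\Order(h^{s-1})$ in $H^1$; the regularity $\|\nnS{n}\ini\|_{H^s}\le M_s$ is supplied by hypothesis. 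For the leading factor $\nnSh{n}$ in (I), I would use the stability estimate for the fully discrete splitting operator from Sec.~\ref{sec:S-stab}, so that $\nnSh{n}$ contributes only a bounded constant (depending on $t_n$) in both norms.

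The heart of the matter is term (II). For each summand I split it as $\nnSh{n-j}\cdot(\nnSh{}\Ph - \Ph\,\nS)\cdot(\nnS{j-1}\ini)$: the outer operator $\nnSh{n-j}$ is controlled by fully discrete stability (again from Sec.~\ref{sec:S-stab}, with the bound $a_{\nS_h}$ on the intermediate iterates entering here), the argument $\nnS{j-1}\ini$ is bounded in the appropriate Sobolev norm by regularity of the exact semi-discrete flow (the quantity $M_{s+2(1-\beta)}$), and the middle factor is the one-step \emph{local} defect $(\nnSh{}\Ph - \Ph\,\nS)u$. This local defect is precisely what Sec.~\ref{sec:interpS-err} is designed to estimate, and I expect a bound of the form $\|(\nnSh{}\Ph - \Ph\,\nS)u\|_{L^2}\le C\,\t\,h^s\,(1+\tfrac{1}{\t})^\beta\,\|u\|_{H^{s+2(1-\beta)}}$ (and one power of $h$ less in $H^1$). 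Summing over $j=1,\dots,n$ produces the factor $n\t = t_n$, yielding the claimed $C\,t_n\,h^s\,(1+\tfrac{1}{\t})^\beta$ in $L^2$ and the analogous $H^1$ bound. The appearance of $\beta = \max\{0,\sgn(p+3-\ell)\}$ and the slightly anomalous Sobolev index $s+2(1-\beta)$ reflects that when $\ell$ is small (i.e.\ $\beta=1$), one cannot afford to differentiate the FEM projection of the nonlinear Poisson term twice for free, so one pays an extra $h^{-1}$ and correspondingly needs two more derivatives on $u$; when $\ell\ge p+3$ ($\beta=0$) the clean classical bound $h^s$ with $\|u\|_{H^{s}}$ survives.

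The main obstacle will be the local-defect estimate for $(\nnSh{}\Ph - \Ph\,\nS)u$ in Sec.~\ref{sec:interpS-err}. One must compare the fully discrete Strang step $\EAh(\tfrac{\t}{2})\,\EBh(\t,\cdot)\,\EAh(\tfrac{\t}{2})$ against $\Ph$ applied to the semi-discrete Strang step $\EA(\tfrac{\t}{2})\,\EB(\t,\cdot)\,\EA(\tfrac{\t}{2})$. This requires: (a) commuting $\Ph$ past the linear Schr\"odinger flow, using that $\Delta_h\Ph = \Ph\Delta$ only holds modulo the FEM error of the Laplacian (here the $H^2$-regularity assumption~\eqref{eq:Omega-property} is essential); (b) controlling the mismatch between the discrete Poisson solve $\DPh(|w_h|^2)$ and $\Ph\DP(|w|^2)$, where both the discretization of $\DP$ and the nonlinearity $w\mapsto|w|^2$ interact — this is where the quadratic nonlinearity forces the extra two derivatives and the $(1+\tfrac1\t)^\beta$ bookkeeping; and (c) assembling these through a Duhamel/variation-of-constants representation of the one-step error so that the $\Order(\t)$ gain (which combines with $h^s$) is visible. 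Once that local estimate is in hand, the Lady Windermere's fan assembly is routine.
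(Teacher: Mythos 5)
Your proposal is correct and follows essentially the same route as the paper's proof: the telescoping (Lady Windermere's fan) decomposition into the interpolation/projection terms and the sum of propagated one-step defects, the stability of $\nS_h$ from Sec.~\ref{sec:S-stab} with the constant $a_{\nS_h}$, the approximation bounds of Theorems~\ref{thm:Brenner-Scott1} and~\ref{thm:Brenner-Scott-Ph-conv} together with the $H^s$-regularity of $\nnS{n}\ini$, and the local consistency estimate of Theorem~\ref{thm:interpS-err} summed over $n$ steps to produce the factor $t_n$. Your additional remarks on how the local defect $(\nnSh{}\Ph-\Ph\,\nS)u$ itself would be established concern the separate proof of Theorem~\ref{thm:interpS-err} and are consistent with what the paper does there.
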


\begin{proof} $~$ \\[-2mm]
	\begin{itemize}
		\item {\em $ L^2 $-bound.}
		We proceed as indicated at the beginning of Sec.~\ref{sec:Outline}
        and use the stability properties~\eqref{eq:Sh-stability}
        of the splitting operator $\nS_h$ (see Proposition~\ref{thm:Sh-stability} in Sec.~\ref{sec:S-stab}):
		\begin{subequations}
			\begin{align}
				&\| \nnSh{n} \Ih\,\ini - \nnS{n} \ini \|_{L^2}
				\leq \| \nnSh{n}(\mathrm{Id} - \Ih)\ini \|_{L^2} + \| \nnSh{n}(\mathrm{Id} - \Ph) \ini \|_{L^2} \notag \\
				& \qquad {} + \sum_{j=1}^{n} \| \nnSh{n-j} \nnSh{} \Ph \nnS{j-1} \ini- \nnSh{n-j} \Ph\,\nS\,\nnS{j-1} \ini \|_{L^2}
				+  \| (\mathrm{Id}-\Ph)\,\nnS{n}\ini \|_{L^2} \notag \\
				& \quad \leq \ee^{C\,t_n\, a_{\nS_h}^2}
				\big(\|(\mathrm{Id} - \Ih)\ini\|_{L^2} + \|(\mathrm{Id} - \Ph)\ini\|_{L^2}\big) \notag \\
				& \qquad {} + \sum_{j=1}^{n} \ee^{C\,(t_n-t_j)\,a_{\nS_h}^2}
				\| \nnSh{} \Ph\,\nnS{j-1} \ini - \Ph\,\nS\,\nnS{j-1} \ini \|_{L^2}
				+ \| (\mathrm{Id}-\Ph)\,\nnS{n}\ini \|_{L^2} \notag \\
				& \quad \leq \ee^{C\,t_n\, a_{\nS_h}^2}
				\big(\|(\mathrm{Id} - \Ih)\ini\|_{L^2} + \|(\mathrm{Id} - \Ph)\ini\|_{L^2}\big) \label{eq:semi-FEM-error-line1}\\
				& \qquad {} + n\,\ee^{C\,t_n\,a_{\nS_h}^2}\max_{1 \leq j\leq n}
				\|\nnSh{} \Ph\,\nnS{j-1} \ini - \Ph\,\nS\,\nnS{j-1} \ini \|_{L^2}
				+ \| (\mathrm{Id}-\Ph)\,\nnS{n}\ini \|_{L^2}\,.
			\end{align}
		\end{subequations}
		By the regularity result for the splitting operator $\nS_h$,
        see Lemma~\ref{thm:Sh-regularity} in Sec.~\ref{sec:regularity-full},
        we can ensure the existence of the constant $a_{\nS_h} $.
			
		The expressions in~\eqref{eq:semi-FEM-error-line1} can be bounded using
		Theorems~\ref{thm:Brenner-Scott1} and~\ref{thm:Brenner-Scott-Ph-conv},
		\begin{align*}
			\|(\mathrm{Id} - \Ph)\,u\|_{L^2} & \leq C\,h^{s}\,\|u\|_{H^s}\,, \\
			\|(\mathrm{Id} - \Ih)\,u\|_{L^2} & \leq C\,h^{s}\,\|u\|_{H^s}\,,
		\end{align*}
		and by the bound~\eqref{eq:Hm-reg-bound} from Proposition~\ref{thm:Hm-regularity} we obtain
		\begin{equation*}
			\|\nnS{n}\ini \|_{H^s} \leq \ee^{ L_s\, t_n}\,\| \ini\|_{H^s}\,.
		\end{equation*}
		It remains to bound $\| \nnSh{} \Ph\,\nnS{j-1} \ini - \Ph\,\nS\,\nnS{j-1} \ini \|_{L^2}$.
        By Theorem~\ref{thm:interpS-err} from Sec.~\ref{sec:interpS-err}, it follows that
		\begin{equation*}
			\| \nS_h(\t,\Ph\,u) - \Ph\,\nS(\t,u) \|_{L^2} \leq C\,\t\,(1+\tfrac{1}{\t})^\beta\, h^s\,,
		\end{equation*}
		for $s=\min\{l,p+1\}\,,\,\beta=\max\{0,\sgn(p+3-l)\}$, where $C$ depends on $d$, $\Omega$, $L_s$, and $M_{s+2(1-\beta)}$. Altogether, we obtain
		\begin{align*}
			\| \nnSh{n} \Ih\,\ini - \nnS{n} \ini \|_{L^2} & \leq  \ee^{C\,t_n\, a_{\nS_h}^2}\,h^s\,\| \ini\|_{H^s}   + \ee^{t_n\, L_s}\,h^s\,\|  \ini \|_{H^s} + n\,\ee^{C\,t_n\,a_{\nS_h}^2}\,C\,\t\,(1+\tfrac{1}{\t})^\beta\, h^s\\
			& \leq C\, h^{s}\, \big( \ee^{C\,t_n\, a_{\nS_h}^2} \, \| \ini\|_{H^s} + t_n\,(1+\tfrac{1}{\t})^\beta \, \ee^{C\,t_n\,a_{\nS_h}^2}\big) \\
			& \leq C\, t_n\,(1+\tfrac{1}{\t})^\beta\, h^s\,,
		\end{align*}
		which concludes the proof for the $L^2$ bound.
				
		\item {\em $ H^1 $-bound.} Analogously as for the $L^2$-bound, we use Theorems~\ref{thm:Brenner-Scott1} and~\ref{thm:Brenner-Scott-Ph-conv} and Proposition~\ref{thm:Hm-regularity} and Theorem~\ref{thm:interpS-err}. Hence we obtain
		\begin{subequations}
			\begin{align*}
				\qquad\quad	\| \nnSh{n} \Ih\,\ini - \nnS{n} \ini\|_{H^1}
				& \leq \|\nnSh{n} \Ih \ini- \nnSh{n}\ini\|_{H^1} + \| \nnSh{n}\ini - \nnSh{n} \Ph \ini\|_{H^1}\notag \\
				& \quad {} + \sum_{j=1}^{n} \big\| \nnSh{n-j} \nnSh{} \Ph\,\nnS{j-1}\ini -\nnSh{n-j} \Ph\, \nnS{}\,\nnS{j-1}\ini \big\|_{H^1}  + \| \Ph\,\nnS{n}\ini - \nnS{n} \ini \|_{H^1}\notag\\
				& {} \leq \ee^{C\,t_n\, a_{\nS_h}^2} \big(\|\Ih\,\ini - \ini\|_{H^1} + \|\ini - \Ph\,\ini\|_{H^1}\big)
                  + \| \Ph\,\nnS{n} \ini - \nnS{n} \ini \|_{H^1}\notag\\
				& \quad {} + n\,\ee^{C\,t_n\,a_{\nS_h}^2}\max_{1 \leq j\leq n}
				\big\|  \nnSh{} \Ph\,\nnS{j-1} \ini- \Ph\,\nnS{} \nnS{j-1} \ini \big\|_{H^1} \notag\\
				& {} \leq \ee^{C\,t_n\, a_{\nS_h}^2} h^{s-1}\,\| \ini\|_{H^s}   + \ee^{t_n\, L_s}h^{s-1}\,\|  \ini \|_{H^s} + n\,\ee^{C\,t_n\,a_{\nS_h}^2}\,C\,\t\,(1+\tfrac{1}{\t})^\beta\, h^{s-1}\\
				& {}\leq  C\, h^{s-1}\, \big( \ee^{C\,t_n\, a_{\nS_h}^2} \, \| \ini\|_{H^s} + t_n\,(1+\tfrac{1}{\t})^\beta \, \ee^{C\,t_n\,a_{\nS_h}^2}\big) \\
				& {}\leq C\, t_n\,(1+\tfrac{1}{\t})^\beta\, h^{s-1}\,,
			\end{align*}
		\end{subequations}
		where the constant $C$ depends on $a_{\nS_h}$, $t_n$, and $\| \ini\|_{H^s}$.
	\end{itemize}
\end{proof}

The following theorem summarizes the semidiscrete error in time:
\begin{theorem}
	\label{thm:Lubich-conv}
	Suppose that the exact solution $\psi(t_n)$ to the Schr{\"o}dinger-Poisson
	equation~\eqref{eq:SPEproblem} is in $H^2$ for $0\leq t_n \leq T$.
	Then, the semi-discrete numerical solution $\nnS{n}\ini$ given
	by the Strang splitting scheme~\eqref{eq:Strang-splitting} with
	stepsize $\t$  satisfies
	\begin{subequations}
		\label{eq:Lub-proof12}
		\begin{align}
			\| \nnS{n}\ini - \psi(t_n)\|_{L^2} & \leq C_1\,\t^2\,,\label{eq:Lub-proof2} \\
			\| \nnS{n}\ini - \psi(t_n)\|_{H^1} & \leq C_2\,\t\,, \label{eq:Lub-proof1}
		\end{align}
	\end{subequations}
	where both constants $C_1$, $C_2$ depend on $T$, $\Omega$, and on the $H^2$-norm of $\psi$.
\end{theorem}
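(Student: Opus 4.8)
The plan is to run the standard Lady Windermere's fan argument for one-step methods, as carried out in~\cite{lubich07}; we only sketch its structure and indicate where the actual work lies. Starting from the telescoping identity~\eqref{eq:global-time-overview}, which rewrites the global error $\nnS{n}\ini-\EF(t_n,\ini)$ as a sum of $n=t_n/\t$ local splitting errors $\nS(\t,\EF(t_{j-1},\ini))-\EF(\t,\EF(t_{j-1},\ini))$, each transported over the remaining interval by the numerical flow $\nnS{n-j}$, two ingredients are needed in each of the norms $L^2$ and $H^1$: a stability bound for $\nnS{m}$ and a bound for the local error $\nS(\t,v)-\EF(\t,v)$.

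For stability I would invoke the estimates for the semi-discrete splitting operator $\nS$ collected in Sec.~\ref{sec:S-stab}: on a fixed ball of $H^1$ the map $\nS(\t,\cdot)$ is Lipschitz with constant $1+C\t$ both in $L^2$ and in $H^1$, so that $\|\nnS{m}v-\nnS{m}w\|_{X}\le\ee^{C\,m\t}\,\|v-w\|_{X}$ for $X\in\{L^2,H^1\}$ as long as the iterates stay in that ball. The arguments occurring in~\eqref{eq:global-time-overview} are the exact solution values $\EF(t_{j-1},\ini)=\psi(\cdot,t_{j-1})$ together with their one-step images under $\nS$ and $\EF$; all of these are $\nO(\t)$-close to the exact trajectory, hence---using the assumed bound on $\|\psi(\cdot,t)\|_{H^2}$ on $[0,T]$, conservation of the $L^2$-norm, and the propagation-of-regularity estimate (Proposition~\ref{thm:Hm-regularity})---lie in one fixed ball of $H^2$. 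This yields a uniform stability constant $\ee^{CT}$; that the $H^1$ Lipschitz estimate holds at all relies on the smoothing of $\DP$, the potential $\DP(|u|^2)$ being bounded in $W^{1,\infty}$ for $u\in H^1$ in dimension $d\le 3$.

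The core of the proof is the local error bound: $\|\nS(\t,v)-\EF(\t,v)\|_{L^2}\le C\t^3$ and $\|\nS(\t,v)-\EF(\t,v)\|_{H^1}\le C\t^2$ for $v$ in a fixed ball of $H^2$, with $C$ depending on $\Omega$, $T$ and $\sup_{[0,T]}\|\psi\|_{H^2}$. I would obtain these by expanding $\nS(\t,v)=\EA(\half\t)\,\EB(\t,\EA(\half\t)v)$ and $\EF(\t,v)$ in $\t$ via the variation-of-constants formula for the subflows; by symmetry of the Strang scheme the terms through order $\t^2$ cancel, and the remainder is $\t^3$ times bounded combinations of iterated commutators of $A=\tfrac{\ii}{2}\Delta$ and of the nonlinear map $u\mapsto-\ii\DP(|u|^2)\,u$, evaluated along the subflows (which preserve a ball of $H^2$). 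The decisive structural fact, special to the Schr{\"o}dinger--Poisson nonlinearity and precisely what makes $H^2$ regularity sufficient (rather than $H^4$ as for the cubic NLS), is that $\DP$ gains two spatial derivatives: every nested bracket contains at most two derivatives of $v$, and the potential only sees $|v|^2\in H^2$, so all these brackets act as bounded maps from a ball of $H^2$ into $L^2$---respectively into $H^1$ when one derivative is spent---using $H^2\hookrightarrow L^\infty\cap W^{1,\infty}$ and the algebra property of $H^2$ for $d\le 3$. This is exactly the computation in~\cite{lubich07}.

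Combining the two ingredients, the sum of the transported local errors is bounded by $\sum_{j=1}^{n}\ee^{C(t_n-t_j)}\,C\t^3\le\ee^{CT}\,C\,n\,\t^3=\ee^{CT}\,C\,t_n\,\t^2$ in $L^2$, and analogously, with $\t^2$ in place of $\t^3$, by $\ee^{CT}\,C\,t_n\,\t$ in $H^1$; absorbing $t_n\le T$ into the constants yields~\eqref{eq:Lub-proof12}. I expect the local-error step to be the main obstacle: one must verify that every nested commutator produced by the third-order expansion of the symmetric scheme is controlled by $\|v\|_{H^2}$ in the respective norm, which is where the regularizing property of the Poisson solve is indispensable and is the part that genuinely requires the analysis of~\cite{lubich07}; the stability and summation steps are then routine.
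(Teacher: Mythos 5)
Your overall architecture --- Lady Windermere's fan via~\eqref{eq:global-time-overview}, stability of $\nS$ in $L^2$ and $H^1$ from Sec.~\ref{sec:S-stab}, and local error bounds of order $\t^3$ in $L^2$ and $\t^2$ in $H^1$ --- is exactly the one the paper follows; the paper likewise defers the expansion of the local error to~\cite{lubich07} and~\cite{koclub08c,koclub08a}. The gap lies in the step you yourself identify as the core. Your justification of why $H^2$ regularity suffices is not correct as stated, and it is precisely this point that the paper's proof has to supply. The claim that ``every nested bracket contains at most two derivatives of $v$'' and that ``the potential only sees $|v|^2\in H^2$'' is false: the critical term in the double commutator governing the $L^2$ bound is $4\,\ii\,\DP(\psi\,\Delta^2\ol{\psi})\,\psi$, in which the Poisson solve is applied to an expression containing \emph{four} derivatives of $\psi$, and the critical term for the $H^1$ bound is $\DP(\psi\,\ol{\Delta\psi})\,\psi$; neither is controlled by $\|\psi\|_{H^2}$ by the naive ``$\DP$ gains two derivatives'' count. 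Moreover, attributing the $H^2$-sufficiency to ``exactly the computation in~\cite{lubich07}'' is inaccurate: the bounds there depend on the $H^3$- and $H^4$-norms of $\psi$, as the paper explicitly notes, and the reduction to $H^2$ is the content that must be proved here.

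What is actually needed, and what the paper does, is the following. For the $H^1$ bound one combines the elliptic estimate $\|\DP g\|_{H^1}\le C\,\|g\|_{H^{-1}}$ with the bilinear estimate $\|f\,g\|_{H^{-1}}\le C\,\|f\|_{L^2}\,\|g\|_{H^1}$ (Proposition~\ref{thm:Hmin1-est}) applied with $f=\ol{\Delta\psi}$, $g=\psi$, which yields $\|\DP(\psi\,\ol{\Delta\psi})\,\psi\|_{H^1}\le C\,\|\psi\|_{H^2}^2\,\|\psi\|_{H^1}$. For the $L^2$ bound the four derivatives cannot be removed by counting alone; one must pass to the dual pairing $(\DP(\psi\,\Delta^2\ol{\psi}),\phi)_{L^2}=(\psi\,\Delta^2\ol{\psi},\DP\phi)_{L^2}$ and integrate by parts twice to arrive at $(\Delta\ol{\psi},\Delta(\ol{\psi}\,\DP\phi))_{L^2}$, after which the regularity $\|\DP\phi\|_{H^2}\le C\,\|\phi\|_{L^2}$ and the embeddings $H^2\hookrightarrow L^\infty$, $H^1\hookrightarrow L^4$ give the bound $C\,\|\psi\|_{H^2}^3$. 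Without this redistribution of derivatives by duality the local error estimate in $L^2$ would require $\psi\in H^4$, and the theorem as stated, with constants depending only on the $H^2$-norm, would not follow from your argument.
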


\begin{proof}
	The detailed proof can be found in~\cite{lubich07} with the restriction that $C_1$ and $C_2$
	depend on the $H^3$- respectively the $H^4$-norm of $\psi$.
	For the improved bounds~\eqref{eq:Lub-proof12} in $H^2(\RR^d)$ we refer to~\cite{koclub08c} and for full details on the computation of the commutators see~\cite{koclub08a}.
	In our case, the domain $\Omega$ is finite, but the analogous Sobolev embeddings hold also in this case, see~\cite{adams75}.
	
	For the dominant terms
	we now show the sharp estimates directly. For the bound~\eqref{eq:Lub-proof1},
    the dependence on the $H^3$-norm is indicated in~\cite{lubich07} to arise from a bound of
	\begin{equation*}
		\|\DP(\psi\,\ol{\Delta \psi})\psi\|_{H^1}\,.
	\end{equation*}
	Since all other terms are already bounded in terms of $ \| \psi \|_{H^2} $, it suffices to estimate this term likewise.
	Using Proposition~\ref{thm:Hmin1-est} we obtain
	\begin{align*}
		\|\DP(\psi\,\ol{\Delta \psi})\,\psi\|_{H^1}&\leq \| \nabla\DP(\psi \,\ol{\Delta \psi})\,\psi\|_{L^2}+\|\DP(\psi\,\ol{\Delta \psi})\,\nabla\psi\|_{L^2}\\
		&\leq \| \DP(\psi\,\ol{\Delta \psi})\|_{H^1}\,\|\psi\|_{L^\infty} + C\,\|\DP(\psi\,\ol{\Delta \psi})\|_{H^1}\,\|\nabla\psi\|_{H^1}\\
		& \leq C\,\| \psi\,\ol{\Delta \psi}\,\|_{H^{-1}}\,\|\psi\|_{H^2}  \leq C\,\| \psi \|_{H^1}\,\| \ol{\Delta \psi}\,\|_{L^2}\,\|\psi\|_{H^2} \\
		& \leq C\,\|\psi\|_{H^2}^2\,\|\psi\|_{H^1}\,.
	\end{align*}
	For the bound~\eqref{eq:Lub-proof2} in terms of $ \| \psi \|_{H^2} $ we refer to the bounds given in~\cite{lubich07}
	and the improved bounds from~\cite{koclub08c}.
	For full details on the computation of the commutators involved, see~\cite{koclub08a}.
	
	However, the critical term in the commutator bound is identified as
	\begin{equation*}
		\|4\,\ii\,\DP(\psi \,\Delta^2 \ol{\psi})\,\psi\|_{L^2}\,,
	\end{equation*}
	for which we will show in detail that it can be bounded in terms of the $H^2$-norm.
	Using a duality argument in $L^2$ and integration by parts we obtain
	\begin{align*}
		\|\DP( \psi\,\Delta^2\ol{\psi})\,\psi\|_{L^2}
		& \leq C\,\| \DP(\psi\,\Delta^2 \ol{\psi})\|_{L^2}\,\| \psi\|_{H^2}
          \leq C\,\sup_{\|\phi\|_{L^2}=1} (\DP(\psi\,\Delta^2 \ol{\psi}),\phi)_{L^2}\,\| \psi\|_{H^2} \\
		& = \sup_{\|\phi\|_{L^2}=1} (\psi\,\Delta^2 \ol{\psi},\DP\phi)_{L^2}\,\|\psi\|_{H^2}  \leq C\,\sup_{\|\phi\|_{L^2}=1} (\Delta^2 \ol{\psi},\ol{\psi}\,\DP\phi)_{L^2}\cdot\|\psi\|_{H^2} \\
		&  = C\,\sup_{\|\phi\|_{L^2}=1} (\Delta \ol{\psi},\Delta(\ol{\psi}\,\DP\phi))_{L^2}\,\cdot\,\|\psi\|_{H^2} \\
		& \leq C\,\sup_{\|\phi\|_{L^2}=1} \int_{\Omega} (\Delta {\ol\psi}) \big( (\Delta \psi)\,\DP(\ol{\phi})
          + (\nabla\psi)\cdot (\nabla\DP(\ol{\phi}))
		+ \psi\,\ol{\phi} \big)\,\dd x\,\cdot\,\| \psi\|_{H^2} \\
		& \leq C\,\sup_{\|\phi\|_{L^2}=1}\big( \|\Delta \psi\|_{L^2}\,\|\Delta \psi\|_{L^2}\,\| \DP(\phi)\|_{L^\infty}
		+ \| \Delta \psi\,\|_{L^2}\,\| \nabla\psi\|_{L^4}\,\| \nabla\DP(\phi)\|_{L^4}\\[-2mm]
		& \qquad\qquad \qquad {} +\| \Delta \psi\,\|_{L^2}\,\| \psi\|_{L^\infty}\,\| \phi\|_{L^2}\big)\,\cdot\,\|\psi\|_{H^2} \\
		& \leq C\,\sup_{\|\phi\|_{L^2}=1}\big( \| \psi\|_{H^2}\,\|\psi\|_{H^2}\,\|\DP(\phi)\|_{H^2}
		+ \|\psi \|_{H^2}\,\| \nabla\psi\|_{H^1}\,\| \DP(\phi)\|_{H^2} \\[-2mm]
		& \qquad \qquad \qquad {} + \| \psi\|_{H^2}\,\| \psi\|_{H^2}\,\|\phi\|_{L^2}\big)\,\cdot\,\|\psi\|_{H^2}  \\
		& \leq C\,\| \psi\|_{H^2}^3\,,
	\end{align*}
	concluding the proof.
\end{proof}

\subsection{Stability properties of the splitting  operators}
\label{sec:S-stab}

To reduce the analysis of the global error to the study of the splitting error in a single time step,
the following stability estimates for the splitting operators $\nS$ and $\nS_h$ are required.
Since, analogously as in~\cite{lubich07},
the $L^2$- and $H^1$-bounds depend on the $H^1$-norms of the numerical solution,
we first need a stability and convergence result in $H^1$
to show the $H^1$-boundedness of the numerical solution.
We list $L^2$- and $H^1$-bounds together and verify the $L^2$-bounds in hindsight.

\begin{proposition}
	\label{thm:Sh-stability}
    The fully discretized splitting operator $\nS_h$ defined in~\eqref{eq:Strang-splitting-h-1}
    enjoys $H^1$-stability and $H^1$-conditional $L^2$-stability,
    \begin{subequations}
    \label{eq:Sh-stability}
	\begin{align}
	\label{eq:Sh-stability-1}
	\big\|\nS_h (\t,\wu) - \nS_h(\t,u)\big\|_{L^2} & \leq \ee^{C\,\t\,a_{\nS_h}^2 } \|\wu-u\|_{L^2}\,, \\
    \label{eq:Sh-stability-2}
    \big\|\nS_h (\t,\wu) - \nS_h(\t,u)\big\|_{H^1} & \leq \ee^{C\,\t\,a_{\nS_h}^2 } \|\wu-u\|_{H^1}\,,
	\end{align}
    \end{subequations}
	for $u,\wu\in \nV^h$, with $a_{\nS_h} = \max\{\| u\|_{H^1},\|\wu\|_{H^1}\}$,
    and $C$ depending on $h$, $d$, and $\Omega$.
\end{proposition}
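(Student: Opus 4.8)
The plan is to reduce both stability estimates to a one-step Lipschitz bound for the nonlinear flow $\EBh$ and then run a Duhamel expansion in the ``frozen potential''. Two elementary isometry facts drive the reduction. By~\eqref{eq:Deltah+i-Def} the discrete Laplacian $\Delta_h$ is self-adjoint and negative semidefinite on $\nVh$ with respect to $(\cdot,\cdot)_{L^2}$, and $\EAh(\half\t)=\ee^{\frac12\ii\t\Delta_h}$ commutes with it; hence $\EAh(\half\t)$ is unitary on $(\nVh,(\cdot,\cdot)_{L^2})$ and preserves the seminorm $|v_h|_{H^1}^2=-(\Delta_h v_h,v_h)_{L^2}$, so it is an isometry in both $L^2$ and $H^1$. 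Since $\DPh(|w_h|^2)$ is real-valued, the generator of the frozen-potential problem~\eqref{eq:ebh-weak-problem}, which acts on $\nVh$ as $v_h\mapsto-\ii\,\Pi_h\big(\DPh(|w_h|^2)\,v_h\big)$ with $\Pi_h$ the $L^2$-orthogonal projection onto $\nVh$, is skew-adjoint on $(\nVh,(\cdot,\cdot)_{L^2})$; therefore $\ebh(t,w_h)$ is an $L^2$-isometry for every $w_h\in\nVh$, and in particular $\|\EBh(t,v_h)\|_{L^2}=\|v_h\|_{L^2}$. Writing $\wu_1=\EAh(\half\t)\wu$, $u_1=\EAh(\half\t)u\in\nVh$, the isometry of $\EAh(\half\t)$ yields $\|\nS_h(\t,\wu)-\nS_h(\t,u)\|_X=\|\EBh(\t,\wu_1)-\EBh(\t,u_1)\|_X$ and $\|\wu_1\|_X=\|\wu\|_X$, $\|u_1\|_X=\|u\|_X$ for $X\in\{L^2,H^1\}$, so it suffices to prove~\eqref{eq:Sh-stability-1}--\eqref{eq:Sh-stability-2} with $\EBh$, $\wu_1$, $u_1$ in place of $\nS_h$, $\wu$, $u$.

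The key identity is the splitting
\[
\EBh(\t,\wu_1)-\EBh(\t,u_1)=\ebh(\t,\wu_1)(\wu_1-u_1)+\big(\ebh(\t,\wu_1)-\ebh(\t,u_1)\big)u_1 ,
\]
combined with the Duhamel formula for the two (autonomous) frozen-potential semigroups,
\[
\big(\ebh(\t,\wu_1)-\ebh(\t,u_1)\big)u_1=\int_0^\t\ebh(\t-s,\wu_1)\,\big(\Bth(\wu_1)-\Bth(u_1)\big)\,\EBh(s,u_1)\;\dd s ,
\]
where $\big(\Bth(\wu_1)-\Bth(u_1)\big)v_h=-\ii\,\Pi_h\big(\DPh(|\wu_1|^2-|u_1|^2)\,v_h\big)$ by linearity of $\DPh$. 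For~\eqref{eq:Sh-stability-1} the first term has $L^2$-norm exactly $\|\wu_1-u_1\|_{L^2}$, and in the integrand the $L^2$-isometries of $\ebh(\t-s,\wu_1)$, of $\EBh(s,u_1)$ and of $\Pi_h$ reduce the estimate to $\|\DPh(|\wu_1|^2-|u_1|^2)\|_{L^\infty}\,\|u_1\|_{L^2}$. The potential difference is bounded by combining the $H^1$-stability of the discrete Poisson solve (Galerkin orthogonality and~\eqref{eq:Proj-prop}), a discrete Sobolev / inverse inequality on the quasi-uniform mesh (whence $C$ becomes $h$-dependent), H\"older's inequality and the embedding $H^1\hookrightarrow L^4$ (valid for $d\le3$), giving $\|\DPh(|\wu_1|^2-|u_1|^2)\|_{L^\infty}\le C(h,d,\Omega)\,(\|\wu_1\|_{H^1}+\|u_1\|_{H^1})\,\|\wu_1-u_1\|_{L^2}\le 2C\,a_{\nS_h}\,\|\wu_1-u_1\|_{L^2}$. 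Since also $\|u_1\|_{L^2}\le\|u_1\|_{H^1}\le a_{\nS_h}$, the integrand is $\le C\,a_{\nS_h}^2\|\wu_1-u_1\|_{L^2}$, the Duhamel term is $\le C\,\t\,a_{\nS_h}^2\|\wu_1-u_1\|_{L^2}$, and hence $\|\EBh(\t,\wu_1)-\EBh(\t,u_1)\|_{L^2}\le(1+C\,\t\,a_{\nS_h}^2)\|\wu_1-u_1\|_{L^2}\le\ee^{C\,\t\,a_{\nS_h}^2}\|\wu_1-u_1\|_{L^2}$, which with the first paragraph is~\eqref{eq:Sh-stability-1}.

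For~\eqref{eq:Sh-stability-2} I would run the same decomposition, but first establish the auxiliary $H^1$-bound $\|\ebh(t,w_h)v_h\|_{H^1}\le\ee^{C(h,d,\Omega)\,t\,\|w_h\|_{H^1}^2}\|v_h\|_{H^1}$ by a Gr\"onwall estimate on $\tfrac{\dd}{\dd s}\|\ebh(s,w_h)v_h\|_{H^1}^2$: the $L^2$-part is conserved, and the gradient part is controlled through $2\,|(\nabla\partial_s\psi_h,\nabla\psi_h)_{L^2}|$, where --- since the $L^2$-projection hidden in the generator does not commute with $\nabla$, preventing the naive integration by parts available in the continuous case --- one bounds $\|\nabla\partial_s\psi_h\|_{L^2}$ by an inverse inequality and $\|\partial_s\psi_h\|_{L^2}\le\|\DPh(|w_h|^2)\|_{L^\infty}\|\psi_h\|_{L^2}$, together with $\|\DPh(|w_h|^2)\|_{L^\infty}\le C(h,d,\Omega)\|w_h\|_{H^1}^2$. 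With this, the first term is $\le\ee^{C\,\t\,a_{\nS_h}^2}\|\wu_1-u_1\|_{H^1}$, and in the Duhamel integrand $\|\ebh(\t-s,\wu_1)\,(\Bth(\wu_1)-\Bth(u_1))\,\EBh(s,u_1)\|_{H^1}\le\ee^{C\,\t\,a_{\nS_h}^2}\,\|\DPh(|\wu_1|^2-|u_1|^2)\|_{W^{1,\infty}}\,\|\EBh(s,u_1)\|_{H^1}$, which by the product rule, $H^1$-stability of $\Pi_h$ on the quasi-uniform mesh, the Poisson/inverse estimates and $\|\EBh(s,u_1)\|_{H^1}\le\ee^{C\,s\,a_{\nS_h}^2}\|u_1\|_{H^1}\le C\,a_{\nS_h}\,\ee^{C\,\t\,a_{\nS_h}^2}$ is $\le C\,a_{\nS_h}^2\,\ee^{C\,\t\,a_{\nS_h}^2}\|\wu_1-u_1\|_{H^1}$; integrating over $[0,\t]$, adding the first term and using $(1+C\,\t\,a_{\nS_h}^2)\le\ee^{C\,\t\,a_{\nS_h}^2}$ (absorbing the product of exponentials into a new constant) gives~\eqref{eq:Sh-stability-2}. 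The main obstacle is exactly this $H^1$ energy estimate for the frozen-potential flow: the presence of the $L^2$-projection in $\Bth(w_h)$ forces the use of an inverse inequality on the quasi-uniform mesh, which is the source of the $h$-dependence of $C$; all remaining ingredients are standard Sobolev and elliptic-regularity estimates of the kind collected in the appendix.
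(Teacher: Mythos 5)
Your overall architecture coincides with the paper's: reduce to the flow $\EBh$ via the $L^2$- and $H^1$-isometry of $\EAh(\half\t)$, split off the term $\ebh(\t,\wu_1)(\wu_1-u_1)$, and treat the remainder by the frozen-potential Duhamel formula~\eqref{eq:B-B-diff}. The gap lies in how you estimate the resulting potential terms: at two decisive points you invoke inverse inequalities on $\nVh$, and these put negative powers of $h$ into the stability constant. First, in the $L^2$ argument you pull $\DPh(|\wu_1|^2-|u_1|^2)$ out in $L^\infty$ and claim a bound by $C(h)\,a_{\nS_h}\,\|\wu_1-u_1\|_{L^2}$. Since $H^1\not\hookrightarrow L^\infty$ for $d\in\{2,3\}$, and an $L^2$-type control of the datum $|\wu_1|^2-|u_1|^2$ only yields $\|\DPh(ab)\|_{H^1}\leq C\,\|ab\|_{H^{-1}}\leq C\,\|a\|_{L^2}\,\|b\|_{H^1}$, the passage to $L^\infty$ costs a factor $h^{1-d/2}$, i.e.\ $h^{-1/2}$ for $d=3$ (the alternative route through $\|\DP(ab)\|_{H^2}\leq C\|ab\|_{L^2}$ avoids the $h$-loss but replaces $\|\wu_1-u_1\|_{L^2}$ by $\|\wu_1-u_1\|_{H^1}$, destroying the $L^2$-stability structure). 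Second, in your Gr\"onwall estimate for $\|\ebh(t,w_h)v_h\|_{H^1}$ you bound $\|\nabla\partial_s\psi_h\|_{L^2}\leq C\,h^{-1}\|\partial_s\psi_h\|_{L^2}$, which places an $h^{-1}$ directly into the exponent. A constant that degenerates as $h\to0$ formally matches the wording ``$C$ depending on $h$'', but it is not the statement the convergence analysis needs: in Theorem~\ref{thm:semi-discrete-error-space} and Lemma~\ref{thm:Sh-regularity} the factor $\ee^{C\,t_n\,a_{\nS_h}^2}$ must remain bounded as $h\to0$, and the paper's proof produces only a benign dependence of the form $1+h$ or $1+h^{1/2}$, explicitly declared negligible for $h<1$.

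Both blow-ups are avoidable, and this is precisely what the paper's auxiliary estimates accomplish. For the $L^2$ bound, do not separate the potential in $L^\infty$: estimate the product directly via $\|\DPh(a\,b)\,c\|_{L^2}\leq\|\DPh(a\,b)\|_{L^4}\,\|c\|_{L^4}\leq C\,\|\DPh(a\,b)\|_{H^1}\,\|c\|_{H^1}\leq C\,\|a\|_{L^2}\,\|b\|_{H^1}\,\|c\|_{H^1}$ (Proposition~\ref{thm:DPh-L2-est}), placing the extra regularity on $c=\ebh(\s,u_1)\,u_1$, which is controlled in $H^1$ by Proposition~\ref{thm:EBh-H1-est} --- no inverse estimate and no $L^\infty$ norm is required. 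For the $H^1$ Gr\"onwall bound, exploit the algebraic cancellation available because $\pot_h=\DPh(|w_h|^2)$ is real-valued: in $\partial_t|\psi_h|_{H^1}^2$ the contribution $\scp{L^2}{\pot_h\nabla\psi_h}{\nabla\psi_h}$ is real and drops out of the imaginary part, leaving only $2\,\Im\scp{L^2}{\psi_h\nabla\pot_h}{\nabla\psi_h}$, whence $\partial_t|\psi_h|_{H^1}\leq\|\psi_h\,\nabla\pot_h\|_{L^2}\leq C\,(1+h)\,\|w_h\|_{H^1}^2\,\|\psi_h\|_{H^1}$ by Corollary~\ref{thm:DPh-H1-est-corollary}, with no factor $h^{-1}$. (Your concern that the hidden $L^2$-projection obstructs this integration by parts is handled by testing the weak equation~\eqref{eq:ebh-weak-problem} with the discrete Laplacian of $\psi_h$; the resulting constants are absorbed into the harmless $1+h$.) With these two replacements your argument becomes the paper's proof; as written, it does not deliver the $h$-uniform stability constant that Proposition~\ref{thm:Sh-stability} is used for.
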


\begin{proposition}
	\label{thm:S-stability}
	The semi-discrete splitting operator $\nS$ defined in~\eqref{eq:Strang-splitting-1}
    enjoys $H^1$-stability and $H^1$-conditional $L^2$-stability,

	\begin{align*}
	\| \nS (\t,\wu) - \nS(\t,u)\|_{L^2}  & \leq \ee^{C\,\t\,a_{\nS}^2 }\,\|\wu-u \|_{L^2}\,, \\
    \| \nS (\t,\wu) - \nS(\t,u)\|_{H^1}  & \leq \ee^{C\,\t\,a_{\nS}^2 }\,\|\wu-u \|_{H^1}\,,
	\end{align*}
	for $u,\wu\in H^1_{0}$, with $a_{\nS}=\max\{\| u\|_{H^1},\|\wu\|_{H^1}\}$,
    and $C$ depending on $d$ and $\Omega$.
\end{proposition}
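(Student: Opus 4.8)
The plan is to reduce the $L^2$- and $H^1$-stability of $\nS(\t,\,\cdot\,)=\EA(\half\t)\,\EB\bigl(\t,\EA(\half\t)\,\cdot\,\bigr)$ to a Lipschitz estimate for the nonlinear subflow $\EB$, and to obtain the latter by exploiting that $\EB$ acts as multiplication by a function of modulus one. First I would note that $\EA(t)=\ee^{\half\ii t\Delta}$ (with homogeneous Dirichlet data) is unitary on $L^2$ and, since it commutes with $\Delta$, also an isometry for the $H^1$-norm; putting $v=\EA(\half\t)\wu$ and $w=\EA(\half\t)u$, this gives $\nS(\t,\wu)-\nS(\t,u)=\EA(\half\t)\bigl(\EB(\t,v)-\EB(\t,w)\bigr)$, with $\|v-w\|=\|\wu-u\|$ in both norms and $\max\{\|v\|_{H^1},\|w\|_{H^1}\}=a_{\nS}$, so it suffices to show $\|\EB(\t,v)-\EB(\t,w)\|\le\ee^{C\t a_{\nS}^2}\|v-w\|$ for $v,w\in H^1_0$, in $L^2$ and in $H^1$.

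Next I would use the fact recorded in Sec.~\ref{sec:setting} that $|\psi(\,\cdot\,,t)|$ is conserved along the $B$-flow, so that $\EB(t,v)=\ee^{-\ii t\,\pot_v}\,v$ with the real-valued potential $\pot_v:=\DP(|v|^2)\in H^2\cap H^1_0$. This already yields $\|\EB(\t,v)\|_{L^2}=\|v\|_{L^2}$, and with the splitting
\[
\EB(\t,v)-\EB(\t,w)=\ee^{-\ii\t\pot_v}(v-w)+\bigl(\ee^{-\ii\t\pot_v}-\ee^{-\ii\t\pot_w}\bigr)w
\]
the $L^2$-bound follows from $|\ee^{\ii a}-\ee^{\ii b}|\le|a-b|$ together with $\|\pot_v-\pot_w\|_{L^6}=\|\DP(|v|^2-|w|^2)\|_{L^6}\le C\,\||v|^2-|w|^2\|_{H^{-1}}\le C\,(\|v\|_{H^1}+\|w\|_{H^1})\,\|v-w\|_{L^2}$ (using $H^1\hookrightarrow L^6$ for $d\in\{2,3\}$ and the bilinear estimates of Proposition~\ref{thm:Hmin1-est}) and $\|w\|_{L^3}\le C\,\|w\|_{H^1}$; this gives $\|\EB(\t,v)-\EB(\t,w)\|_{L^2}\le(1+C\t a_{\nS}^2)\|v-w\|_{L^2}$, and the claim follows from $1+x\le\ee^{x}$. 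For the $H^1$-bound I would combine this with a gradient estimate: taking $\nabla$ of the two summands produces terms containing $\nabla\pot_v$, $\nabla\pot_w$ and $\nabla\pot_v-\nabla\pot_w=\nabla\DP(|v|^2-|w|^2)$, which are controlled in $L^3$ through the $H^2$-regularity estimate (cf.~\eqref{eq:Omega-property}) and $H^1\hookrightarrow L^6\hookrightarrow L^3$ (so that $\|\nabla\pot_v\|_{L^3}\le C\,\|v\|_{H^1}^2$ and $\|\nabla\pot_v-\nabla\pot_w\|_{L^3}\le C\,a_{\nS}\,\|v-w\|_{H^1}$), while the exponential factors are again estimated by $|\ee^{\ii a}-\ee^{\ii b}|\le|a-b|$ and, where an $L^\infty$-bound is needed, by $\|\pot_v-\pot_w\|_{L^\infty}\le C\,\|\DP(|v|^2-|w|^2)\|_{H^2}\le C\,a_{\nS}\,\|v-w\|_{H^1}$ via $H^2\hookrightarrow L^\infty$ ($d\in\{2,3\}$). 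Collecting, every contribution to $\|\nabla(\EB(\t,v)-\EB(\t,w))\|_{L^2}$ is bounded by $\t$ times a product of $H^1$-norms of $v,w$ times $\|v-w\|_{H^1}$, and absorbing the higher powers of $\t$ into $C$ by $0<\t\le T$ yields $\|\EB(\t,v)-\EB(\t,w)\|_{H^1}\le\ee^{C\t a_{\nS}^2}\|v-w\|_{H^1}$.

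A cleaner way to organise the $H^1$-part, which I would in fact prefer, starts from the Duhamel identity $\EB(\t,v)-\EB(\t,w)=v-w+\int_0^{\t}\bigl(B(\EB(s,v))-B(\EB(s,w))\bigr)\,\dd s$, uses the Lipschitz estimate $\|B(\phi)-B(\chi)\|_{H^1}\le C\max\{\|\phi\|_{H^1},\|\chi\|_{H^1}\}^2\,\|\phi-\chi\|_{H^1}$ (proved by the same elliptic-regularity and Sobolev arguments), together with an a priori bound $\|\EB(s,v)\|_{H^1}\le C\,\|v\|_{H^1}$ for $s\in[0,\t]$ deduced from a differential inequality for $\|\nabla\EB(s,v)\|_{L^2}$, and then closes by Grönwall's lemma. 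In either version the main obstacle is the $H^1$-estimate: one must treat the cubic but nonlocal nonlinearity so that only the \emph{second} power of the $H^1$-norm of the data enters the rate, and this is exactly where the two-derivative smoothing of $\DP$, the boundary regularity~\eqref{eq:Omega-property}, and the low dimensionality (yielding $H^1\hookrightarrow L^6$ and $H^2\hookrightarrow L^\infty$) are used, as well as the boundedness of $\t$ to absorb the higher-order-in-$\t$ remainders. The $L^2$-estimate is then a by-product of modulus conservation along the $B$-flow plus the same bilinear bounds, which matches the ``$H^1$-conditional'' character of the statement.
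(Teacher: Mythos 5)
Your proposal is correct and follows essentially the same route as the paper: reduce to the $\EB$-difference via the $L^2$- and $H^1$-isometry of $\EA$, split off the part controlled by unitarity of the frozen flow, and bound the remainder by $\t$ times the bilinear elliptic estimates ($\|\DP(fg)\|_{H^1}\le C\|fg\|_{H^{-1}}\le C\|f\|_{L^2}\|g\|_{H^1}$, $H^2$-regularity, and the embeddings $H^1\hookrightarrow L^4,L^6$, $H^2\hookrightarrow L^\infty$), closing with $1+x\le\ee^x$. Your use of the explicit representation $\EB(t,v)=\ee^{-\ii t\DP(|v|^2)}v$ with $|\ee^{\ii a}-\ee^{\ii b}|\le|a-b|$ is just the pointwise form of the variation-of-constants identity~\eqref{eq:B-B-diff} the paper uses, and your worked-out $H^1$ case supplies the details the paper delegates to~\cite{lubich07}.
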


\begin{proof}[Proof of $L^2$-stability in Proposition~\ref{thm:Sh-stability}]
	{\revred Our goal is to find} an estimate of $\nS_h(\t,\wu)-\nS_h(\t,u)$
    for two functions $u$, $\wu \in \nV^h$.
\begin{itemize}
\item
    We combine the unitarity of the operators $\EAh$ and $\EBh$
    (see Proposition~\ref{thm:discrete-conservation},
    \eqref{eq:discrete-conservation-A-L2} and~\eqref{eq:discrete-conservation-B-L2})
    with the linearity of $\EAh$ and the definition of
    $\EBh(\t,u)=\ebh(\t,u)u$ with the linear operator $ \ebh(\t,\,\cdot\,) $ from~\eqref{eq:def-ebh}.
    With the abbreviations $ w_h = \eAh u $, $ \wt{w}_h = \eAh\wu $ we obtain
	\begin{subequations}
		\begin{align*}
		\| \nS_h (\t,\wu) - \nS_h(\t,u)\|_{L^2}
        & = \|\eAh \big( \EBh(\t,\wt{w}_h ) - \EBh(\t,w_h) \big)\|_{L^2} \\
		&  
		   = \| \ebh(\t,\wt{w}_h )\wt{w}_h - \ebh(\t,w_h)\,w_h \|_{L^2} \\
		& \leq \| \ebh(\t,\wt{w}_h )(\wt{w}_h -w_h)\|_{L^2}
		    + \|\ebh(\t,\wt{w}_h )\,w_h - \ebh(\t,w_h)\,w_h \|_{L^2} \\
        & =\|\wt{w}_h -  w_h\|_{L^2} +  \| \ebh(\t,\wt{w}_h )\,w_h - \EBh(\t,w_h)\|_{L^2} \\
		& =\|\wu - u\|_{L^2} +  \| \ebh(\t,\wt{w}_h )\,w_h - \EBh(\t,w_h)\|_{L^2} \,.
		\end{align*}
	\end{subequations}
	To estimate the last term we use the
    {\revred mild formulation}~\eqref{eq:B-B-diff},
	\begin{align}
	&\| \ebht{\wt{w}_h} w_h - \EBh(\t, w_h) \|_{L^2}\notag \\
	&\qquad \leq \int_0^\t \big\|\ebh(\t-\s,\wt{w}_h)\big( \Bt_h(\wt{w}_h) - \Bt_h(w_h)\big)\,\ebh(\s,w_h)\,w_h \big\|_{L^2}\,\dd \s \notag \\
	&\qquad \leq \t\,\sup_{0\leq \s \leq\t} \big\|\big(\Bt_h(\wt{w}_h) - \Bt_h(w_h)\big)\,\ebh(\s,w_h)\,w_h \big\|_{L^2}\label{eq:Err-prop-est}\,.
	\end{align}	
\item
	To find an estimate for the right-hand side in~\eqref{eq:Err-prop-est}
    we denote $z_h=\Bt_h(w_h )$,\, $\wt{z}_h=\Bt_h(\wt{w}_h )$, and consider the Galerkin equations
		\begin{align*}
		(\nabla z_h,\nabla v_h)_{L^2} & = -(|w_h|^2,v_h)_{L^2}
        \quad \text{for all}~~ v_h \in \nV^h\,, 
     \\
		(\nabla \wt{z}_h,\nabla v_h)_{L^2} & = -(|\wt{w}_h|^2,v_h)_{L^2}
        \quad \text{for all}~~ v_h \in \nV^h\,. 
		\end{align*}
	The difference
	\begin{equation*}
	(\nabla (\wt{z}_h-z_h),\nabla v_h)_{L^2} = -(|\wt{w}_h|^2-|w_h|^2,v_h)_{L^2}
    \quad \text{for all}~~ v_h \in \nV^h\,,
	\end{equation*}	
	is again a discrete Poisson problem with solution
	\begin{equation*}
	\wt{z}_h - z_h = \DPh(|\wt{w}_h|^2- |w_h|^2)\,.
	\end{equation*}
	With this observation 
    we obtain for~\eqref{eq:Err-prop-est}:
	\begin{align}
	&\t\,\sup_{0\leq \s \leq\t}\|(\Bt_h(\wt{w}_h) - \Bt_h(w_h)) |\ebh(\s,w_h)\,w_h| \|_{L^2}  \notag \\
	& \qquad \qquad  = \t\,\sup_{0\leq \s \leq\t} \| \DPh (|\wt{w}_h|^2 -|w_h|^2 ) |\ebh(\s,w_h)\,w_h|\|_{L^2} \notag\\
	& \qquad \qquad \leq \t \,\sup_{0\leq \s \leq\t} \| \DPh (|\wt{w}_h-w_h| \cdot | \wt{w}_h+w_h| )|\ebh(\s,w_h)\,w_h|\|_{L^2}\,.	\label{eq:Bt-linear}
	\end{align}
	Proposition~\ref{thm:DPh-L2-est} in the appendix yields
	\begin{align*}
	&\| \DPh (|\wt{w}_h-w_h| \cdot | \wt{w}_h+w_h| )|\ebh(\s,w_h)\,w_h|\|_{L^2} \\
	& \qquad \qquad {} \leq C\,\| \wt{w}_h-w_h\|_{L^2}\,\| \wt{w}_h+w_h \|_{H^1} \| \ebh(\s,w_h)\,w_h\|_{H^1}\,.
	\end{align*}
	Inserting $w_h=\eAh u$ and $\wt{w}_h=\eAh \wu$ again and using Proposition~\ref{thm:EBh-H1-est} yields
	\begin{align*}
	& \| \ebh(\t,\eAh\wu )\,\eAh u - \EBh(\t,\eAh u ) \|_{L^2} \\
	& \qquad \leq \t\, C\,\| \eAh (\wu -u)\|_{L^2}\,\| \eAh (\wu +u)\|_{H^1}\,\ee^{\t\, C\,\| u\|_{H^1}^2}\,\| u\|_{H^1}\\
	& \qquad \leq \t\, C\,\| \wu - u\|_{L^2}\,( \| \wu\|_{H^1} + \| u \|_{H^1})\,\ee^{\t\, C\,\| u\|_{H^1}^2}\,\| u\|_{H^1}\\
	& \qquad \leq C\,\t\, a_{\nS_h}^2\,\ee^{\t\, C\,\| u\|_{H^1}^2}\,\| \wu -u\|_{L^2} \,.
	\end{align*}
	With $1+x\leq \ee^x$ we finally obtain
	\begin{equation*}
	\| \nS_h (\t,\wu) - \nS_h(\t,u)\|_{L^2}
    \leq \ee^{C\,\t\,a_{\nS_h}^2} \|\wu-u\|_{L^2}\,,
	\end{equation*}
	with $a_{\nS_h} = \max\{\| u \|_{H^1}, \|\wu \|_{H^1}\}$.  To show the boundedness of the constant $a_{\nS_h}$ in $H^1$, we further need the $H^1$ stability result (Proposition~\ref{thm:Sh-stability}),
    the $H^1$ interpolation error (Theorem~\ref{thm:interpS-err}) and the regularity of the $H^1$ solution
    (Lemma~\ref{thm:Sh-regularity}).
\end{itemize}
\end{proof}

\begin{proof}[Proof of the $H^1$-stability in Proposition~\ref{thm:Sh-stability}]
	
	We start similarly as for the $L^2$ case:

	We combine the unitarity of the operator $\EAh$ in $H^1$ (see Proposition~\ref{thm:discrete-conservation}, \eqref{eq:discrete-conservation-A-H1})
	with the linearity of $\EAh$ and the definition of  $\EBh(\t,u)=\ebh(\t,u)u$ with the linear operator $ \ebh(\t,\,\cdot\,) $ from~\eqref{eq:def-ebh}.
		With the abbreviations $ w_h = \eAh u $, $ \wt{w}_h = \eAh\wu $ we obtain
	\begin{subequations}
		\begin{align}
		\| S_h (\t,\wu) - S_h(\t,u) \|_{H^1}
		&= \|\eAh \EBh(\t,\wt{w}_h) -\eAh \EBh(\t,w_h)\|_{H^1} \notag \\
        &  = \| \ebh(\t,\wt{w}_h )\wt{w}_h - \ebh(\t,w_h)\,w_h \|_{H^1} \notag \\
		&\leq \|\ebh(\t,\wt{w}_h)\wt{w}_h-\ebh(\t,\wt{w}_h)\,w_h\|_{H^1}
		       + \|\ebh(\t,\wt{w}_h)\,w_h -\ebh(\t,w_h)\,w_h\|_{H^1} \notag\\
		&= \| \ebh(\t,\wt{w}_h)\,\eAh(\wu - u)\|_{H^1} \label{eq:Sh-H1-line-a}\\
		& \quad {} + \|\ebh(\t,\wt{w}_h)\,w_h - \ebh(\t,w_h)\,w_h \|_{H^1}\,.\label{eq:Sh-H1-line-b}
		\end{align}
	\end{subequations}
    Now we separately estimate~\eqref{eq:Sh-H1-line-a} and~\eqref{eq:Sh-H1-line-b}.
	First, from Proposition{~\ref{thm:EBh-H1-est}} we obtain for~\eqref{eq:Sh-H1-line-a}
	\begin{equation*}
	\| \ebh(\t,\wt{w}_h)\,\eAh(\wu - u) \|_{H^1}
    \leq \ee^{\t\,  C \| \wu\|_{H^1}^2} \|\wu - u \|_{H^1}\,.
	\end{equation*}
	To obtain a bound for~\eqref{eq:Sh-H1-line-b}, we use the linear variation-of-constant formula as in~\eqref{eq:B-B-diff} and
    Proposition~\ref{thm:EBh-H1-est},
	\begin{subequations}
		\begin{align}
		& \|\ebh(\t,\wt{w}_h)\,w_h - \ebh(\t,w_h)\,w_h \|_{H^1} \notag \\
		& \qquad \leq \t \sup_{0\leq \s\leq \t} \|\ebh(\t-\s,\wt{w}_h)(\Bt_h(\wt{w}_h ) - \Bt_h(w_h))
		                                        \cdot \ebh(\s,w_h)\,w_h\|_{H^1} \notag\\
		& \qquad \leq \t \sup_{0\leq \s\leq \t} \ee^{\t\, C \|\wt{w}_h\|_{H^1}^2}\|(\Bt_h(\wt{w}_h ) - \Bt_h(w_h))\,\ebh(\s,w_h)\,w_h\|_{H^1} \notag\\
		& \qquad \leq \t \sup_{0\leq \s\leq \t} \ee^{ \t\, C \|\wu\|_{H^1}^2}
          \big(\|(\nabla(\Bt_h(\wt{w}_h ) - \Bt_h(w_h)))\,\ebh(\s,w_h)\,w_h\|_{L^2} \label{eq:Sh-H1-line-c} \\[-2mm]
		& \qquad\qquad\qquad\qquad\qquad\qquad {} + \|(\Bt_h(\wt{w}_h ) - \Bt_h(w_h))\nabla(\ebh(\s,w_h)\,w_h )\|_{L^2} \big)\label{eq:Sh-H1-line-d}\,.
		\end{align}
	\end{subequations}
	For~\eqref{eq:Sh-H1-line-c} we use the bound~\eqref{eq:DPh-H1-est-corr} from Corollary~\ref{thm:DPh-H1-est-corollary},
	\begin{align*}
	\|(\nabla(\Bt_h(\wt{w}_h) - \Bt_h(w_h)))\,\ebh(\s,w_h)\,w_h\|_{L^2}
	&= \|(\nabla\DPh(|\wt{w}_h|^2 - |w_h|^2))\,\ebh(\s,w_h)\,w_h\|_{L^2} \\
	&\leq \|(\nabla\DPh(|\wt{w}_h -w_h|\,|\wt{w}_h + w_h|))\,\ebh(\s,w_h)\,w_h\|_{L^2} \\
	&\leq C\,(1+h)\,\| \wt{w}_h -w_h\|_{H^1}\,\| \wt{w}_h + w_h\|_{H^1}\,\|\ebh(\s,w_h)\,w_h\|_{H^1} \\
	&\leq C\,(1+h)\,\|\wu-u\|_{H^1}\,\| \wu+u\|_{H^1}\,\ee^{\t\, C\|u\|_{H^1}^2}\,\| u \|_{H^1}\,.
	\end{align*}
	For~\eqref{eq:Sh-H1-line-d} we use in addition~\eqref{eq:Brenner-Scott-infty}, the $L^\infty$ bound from Theorem~\ref{thm:Brenner-Scott1},
	\begin{align*}
	&\|(\Bt_h(\wt{w}_h) - \Bt_h(w_h))\nabla(\ebh(\s,w_h)\,w_h )\|_{L^2} \\
	& \qquad \leq  \|  \DPh(|\wt{w}_h -  w_h|\, | \wt{w}_h+ w_h|) \|_{L^\infty}\,\| \nabla(\ebh(\s,w_h)\,w_h )\|_{L^2} \\
	& \qquad\leq \big( \|  \DP(|\wt{w}_h -  w_h|\, | \wt{w}_h+ w_h|)  - \DPh(|\wt{w}_h -  w_h|\, | \wt{w}_h+ w_h|) \|_{L^\infty} \\
	& \qquad \qquad +  \|  \DP(|\wt{w}_h -  w_h|\, | \wt{w}_h+ w_h|) \|_{L^\infty}\big)  \| \nabla(\ebh(\s,w_h)\,w_h )\|_{L^2} \\
	& \qquad\leq \big(  C\,h^{2-\frac{3}{2}} \|  \DP(|\wt{w}_h -  w_h|\, | \wt{w}_h+ w_h|) \|_{H^2}+ C\,\|  \DP(|\wt{w}_h -  w_h|\, | \wt{w}_h+ w_h|) \|_{H^2} \big)
             \cdot \ee^{\s\, C\,\| w_h\|_{H^1}^2}\,\| w_h\|_{H^1} \\
	& \qquad\leq C\,(1+h^{1/2})\,\| \wt{w}_h- w_h\|_{H^1}\,\| \wt{w}_h +  w_h\|_{H^1}\,\ee^{\t\, C\,\| w_h\|_{H^1}^2}\,\|  w_h \|_{H^1}\\
	& \qquad\leq \| \wt{w}_h- w_h\|_{H^1} \big(\ee^{\t\, C\,\| w_h\|_{H^1}^2}\,C\,(1+h^{1/2}) (\| \wt{w}_h\|_{H^1} + \| w_h\|_{H^1})\,\|  w_h \|_{H^1} \big)\\
	& \qquad\leq \| \wu - u\|_{H^1} \big(\ee^{\t\, C\,\| u\|_{H^1}^2}\,C\,(1+h^{1/2}) (\| \wu \|_{H^1} + \| u\|_{H^1})\,\| u \|_{H^1} \big)\,.
	\end{align*}
	Summarizing, we conclude
	\begin{equation*}
		\| S_h (\t,\wu) - S_h(\t,u)\|_{H^1}
		\leq \| \wt u - u\|_{H^1}\,\ee^{\t\,C\, a_{\nS_h}^2} ( 1+C\,  \t\,a_{\nS_h}^2\,\ee^{\t\, C\,a_{\nS_h}^2} )\,,
		\end{equation*}
		where $a_{\nS_h}=\max\{\| u \|_{H^1}, \| \wu \|_{H^1} \} $ and $C$ depends on $h$, on $d$, and on $\Omega$.
        The dependence on $h$ for $h<1$ is negligible and will be omitted in the further analysis.
		
		With $1+x\leq \ee^x$ we finally obtain
		\begin{equation*}
		\| S_h (\t,\wu) - S_h(\t,u) \|_{H^1} \leq \ee^{C\,\t\, a_{\nS_h}^2} \| \wt u - u\|_{H^1}\,,
		\end{equation*}
		finishing the proof.
\end{proof}

\begin{proof}[Proof of Proposition~\ref{thm:S-stability}]
	This is conducted analogously to the proof of Proposition~\ref{thm:Sh-stability}, substituting $\eA u$ by $w$ and $\eA \wu$ by $\wt{w}$, with the Sobolev embedding replaced by Hardy's inequality (see the proof in~\cite{lubich07}),
	\begin{align*}
	\| \DP (|\wt{w} - w| \cdot | \wt{w}+ w| )\EB(\t,w)\|_{L^2} & \leq C\,\|\wt{w} - w\|_{L^2} \| \wt{w} + w\|_{H^1}  \|\EB(\t,w)\|_{L^2}\\
	& \leq C\,\| \wu - u\|_{L^2}\,\|\wu + u\|_{H^1}\,\|u\|_{L^2}\\
	& \leq C\,\| \wu - u\|_{L^2}\, a_{\nS}^2\,.
	\end{align*}
	
	For the proof of $H^1$-stability,
    which is less involved for $\nS$ than for $\nS_h$, we refer to~\cite{lubich07}.
\end{proof}

\subsection{Consistency of the fully discretized splitting operator}
\label{sec:interpS-err}

The aim of this section is to provide an estimate for $\nS_h(\t,\Ph\,u) -\Ph S(\t,u)$
in the $H^1$- and the $L^2$-norm, which will be presented in Theorem~\ref{thm:interpS-err}.
As an essential part of the proof, we first bound the approximation error of $\EAh$ as compared to $\Ph \EA$ in these norms.

\paragraph{$L^2$- and $H^1$-estimates for $ (\Ph \EA(\t) - \EAh(\t) \Ph)u $}
Propositions~\ref{thm:EAdiff} and~\ref{thm:EAdiff-weak} below specify bounds for
\begin{equation}
\label{eq:e(tau)}
e_A(\t) := (\Ph \EA(\t) - \EAh(\t) \Ph)u \,\in \nV^h\,.
\end{equation}
As before, $ p $ denotes the polynomial degree associated with the FEM subspace $\nV^h$.

Proposition~\ref{thm:EAdiff} requires a higher Sobolev regularity of $s+2$, but offers an additional dependence on $\t$, while Proposition~\ref{thm:EAdiff-weak} requires only a Sobolev regularity of $s$. Both results can be recast together as
\begin{align*}
\| e_A(\t) \|_{L^2}&\leq C\,\t\, (\tfrac{1}{\t})^\beta h^s \|u  \|_{H^{s+2(1-\beta)}}\,,\\
\| e_A(\t) \|_{H^1}&\leq C\,\t\, (\tfrac{1}{\t})^\beta h^{s-1} \|u  \|_{H^{s+2(1-\beta)}}\,,
\end{align*}
where $\beta=\max\{0,\sgn(p+3-\ell)\}$ and $s=\min\{p+1,\ell\}$. Here, the dependence on $(\tfrac{1}{\t})^\beta$ indicates a reduced approximation quality for $\ell< p+3$.

\begin{proposition}
	\label{thm:EAdiff}
	Let $u\in H^{\ell}$ for $\ell\geq 1$.  Then, $ e_A(\tau) $ from~\eqref{eq:e(tau)} satisfies
	\begin{align}
	\| e_A(\t) \|_{L^2}  & \leq C\, \t\, h^s \|u  \|_{H^{s+2}}\,, \label{eq:EAdiff-L2}\\
	\| e_A(\t) \|_{H^1}  & \leq C\, \t\, h^{s-1} \| u \|_{H^{s+2}}\,, \label{eq:EAdiff-H1}
	\end{align}
	where $C$ depends on $d$ and $\Omega$ and $s=\min\{p+1,\ell-2\}$.
\end{proposition}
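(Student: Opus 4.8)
The plan is to compare the exact flow $\psi(t)=\EA(t)u$ with the semidiscrete flow $\psi_h(t)=\EAh(t)\Ph u\in\nVh$, so that $e_A(\t)=\Ph\psi(\t)-\psi_h(\t)$, and to write $e_A$ as a time integral over a defect governed by the elliptic projection error; integrating over $[0,\t]$ then produces the extra factor $\t$. Since $\EAh(t)=\ee^{tA_h}$ and $\EA(t)$ commute with their generators, differentiating $s\mapsto\EAh(\t-s)\,\Ph\,\EA(s)u$ (legitimate for $u$ in the domain of a suitable power of $A$) gives the variation-of-constants representation
\[
  e_A(\t)=\int_0^\t\EAh(\t-s)\,\bigl(\Ph A-A_h\Ph\bigr)\,\EA(s)u\;\dd s .
\]

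The crucial algebraic fact, which I would establish first, is the commutation identity $\Delta_h\,\Ph v=Q_h\,\Delta v$ for $v\in H^2\cap H^1_0$, where $Q_h$ denotes the $L^2$-orthogonal projection onto $\nVh$. This follows by testing: for all $v_h\in\nVh$,
\[
  \scp{L^2}{\Delta_h\Ph v}{v_h}=-\scp{L^2}{\nabla\Ph v}{\nabla v_h}
   =-\scp{L^2}{\nabla v}{\nabla v_h}=\scp{L^2}{\Delta v}{v_h}=\scp{L^2}{Q_h\Delta v}{v_h},
\]
using~\eqref{eq:Deltah-Def}, the Galerkin orthogonality~\eqref{eq:Ph-def} of $\Ph$, integration by parts (no boundary term, $v_h\in H^1_0$), and the definition of $Q_h$. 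Consequently $(\Ph A-A_h\Ph)v=\half\ii\,(\Ph-Q_h)\Delta v$, so the integrand above equals $\half\ii\,\EAh(\t-s)\,(\Ph-Q_h)\,\Delta\EA(s)u$, and the estimate reduces to bounding the difference of the two projections applied to $\Delta\EA(s)u$.

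For the estimates I would combine three ingredients. First, $\EAh(\t-s)$ is bounded on $L^2$ and on $H^1$, uniformly (Proposition~\ref{thm:discrete-conservation}, \eqref{eq:discrete-conservation-A-L2}, \eqref{eq:discrete-conservation-A-H1}). Second, for $w\in H^s\cap H^1_0$ with $1\le s\le p+1$,
\[
  \|(\Ph-Q_h)w\|_{L^2}\le C\,h^{s}\,\|w\|_{H^s},\qquad
  \|(\Ph-Q_h)w\|_{H^1}\le C\,h^{s-1}\,\|w\|_{H^s},
\]
where the bounds for $\Ph$ come from Theorem~\ref{thm:Brenner-Scott-Ph-conv} (the Aubin--Nitsche step using the $H^2$-regularity of $\Omega$, \eqref{eq:Omega-property}) and those for $Q_h$ are the standard ones on a quasi-uniform mesh (the $H^1$-estimate also using the $H^1$-stability of $Q_h$). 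Third, $\|\Delta\EA(s)u\|_{H^s}\le C\,\|\EA(s)u\|_{H^{s+2}}\le C\,\|u\|_{H^{s+2}}$, which is meaningful and holds because $s=\min\{p+1,\ell-2\}$ forces $s+2\le\ell$ and $\EA(s)=\ee^{sA}=\ee^{\half\ii s\Delta}$ commutes with functions of the Dirichlet Laplacian, hence is bounded on $H^{s+2}$ (cf.\ Proposition~\ref{thm:Hm-regularity}). Putting these together bounds the $L^2$- (resp.\ $H^1$-) norm of the integrand by $C\,h^{s}\|u\|_{H^{s+2}}$ (resp.\ $C\,h^{s-1}\|u\|_{H^{s+2}}$), uniformly in $s\in[0,\t]$, and integration over $[0,\t]$ yields~\eqref{eq:EAdiff-L2} and~\eqref{eq:EAdiff-H1}.

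The point requiring genuine care --- and the most likely source of trouble --- is the regularity needed to make the above meaningful: applying $\Ph$ to $\Delta\EA(s)u$ and estimating $(\Ph-Q_h)\Delta\EA(s)u$ to order $h^s$ requires $\Delta\EA(s)u\in H^s\cap H^1_0$, i.e.\ a compatibility condition on $u$ beyond mere $H^\ell$-membership (otherwise a boundary layer would spoil the $L^2$-approximation of $\Delta\EA(s)u$ by functions in $\nVh\subset H^1_0$); this is covered by the global regularity hypotheses under which the proposition is invoked. Everything else is routine bookkeeping of Sobolev indices. An equivalent route, in the classical finite-element spirit, is to split $\psi_h-\Ph\psi=\theta+(\Ph\psi-\psi)$, note that the elliptic-projection part drops out of the weak error equation by the Galerkin orthogonality of $\Ph$, and estimate $\theta\in\nVh$ by the same Duhamel argument with forcing $(\Ph-\mathrm{Id})\partial_t\psi$.
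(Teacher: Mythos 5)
Your proof is correct, but it follows a genuinely different route from the paper's. You represent the error by a Duhamel formula, $e_A(\t)=\int_0^\t\EAh(\t-s)(\Ph A-A_h\Ph)\EA(s)u\,\dd s$, and identify the commutator explicitly via $\Delta_h\Ph=Q_h\Delta$, so that the defect becomes $\half\ii(\Ph-Q_h)\Delta\EA(s)u$; the estimate then follows from the unitarity of $\EAh$ in $L^2$ and $H^1$ together with the approximation properties of the two projections. The paper instead runs an energy argument: it subtracts the two weak formulations, tests with $e_A(t)$, uses the Galerkin orthogonality \eqref{eq:Ph-def} of $\Ph$ to annihilate the stiffness term, and arrives at the differential inequality $\partial_t\|e_A(t)\|_{L^2}\le\|(\Ph-\mathrm{Id})\partial_t y(t)\|_{L^2}$, which after inserting $\partial_t y=\half\ii\Delta y$ and Theorem~\ref{thm:Brenner-Scott-Ph-conv} integrates to \eqref{eq:EAdiff-L2}; the $H^1$ bound \eqref{eq:EAdiff-H1} is then obtained from the $L^2$ bound by the inverse estimate (Theorem~\ref{thm:Brenner-Scottinv}). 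The two mechanisms exploit the same cancellation --- the Ritz projection removes the elliptic part of the defect --- and both ultimately require $\Delta\EA(s)u\in H^s\cap H^1_0$, which is exactly where the $H^{s+2}$-norm and the index shift $s=\min\{p+1,\ell-2\}$ enter (the compatibility caveat you flag applies equally to the paper's argument). What the paper's version buys is economy: no $L^2$-projection $Q_h$, no differentiation of $s\mapsto\EAh(\t-s)\Ph\EA(s)u$, and only one projection estimate. What your version buys is an explicit defect structure and an $H^1$ bound obtained directly from the $H^1$-approximation of $(\Ph-Q_h)$ rather than through the inverse inequality --- though since $(\Ph-Q_h)w\in\nVh$, the two are interchangeable here, and your appeal to the $H^1$-stability of $Q_h$ (standard on the quasi-uniform meshes assumed) could be replaced by that same inverse estimate.
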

\begin{proposition}
	\label{thm:EAdiff-weak}
	Let $u\in H^{\ell}$ for $\ell\geq1$.  Then, $ e_A(\tau) $ from~\eqref{eq:e(tau)} satisfies
	\begin{align}
	\| e_A(\t) \|_{L^2}  & \leq C\, h^s \,\|u  \|_{H^{s}}\,,\\
	\| e_A(\t) \|_{H^1}  & \leq C\, h^{s-1}\, \| u \|_{H^{s}}\,,
	\end{align}
	where $C$ depends on $d$ and $\Omega$ and $s=\min\{p+1,\ell\}$.
\end{proposition}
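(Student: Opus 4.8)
\emph{Plan of proof.}\quad
The idea is to read $e_A(\t)$ from \eqref{eq:e(tau)} as the error of a semidiscrete Galerkin approximation of the free Schr\"odinger flow and to run the classical error-equation argument, keeping careful track of which Sobolev norms are used. Set $v(t)=\EA(t)u$ and $v_h(t)=\EAh(t)\,\Ph u\in\nVh$; by \eqref{eq:SubproblemWeak-A}, \eqref{eq:EAh-weak-problem} and \eqref{eq:e(tau)} one has $e_A(\t)=\Ph v(\t)-v_h(\t)$. Splitting $v_h-v=\theta+\eta$ with $\eta=(\Ph-\mathrm{Id})v$ and $\theta=v_h-\Ph v\in\nVh$ gives $e_A(\t)=-\theta(\t)$, $\theta(0)=0$, and subtracting the weak formulations and invoking the Galerkin orthogonality \eqref{eq:Ph-def} of $\Ph$ together with \eqref{eq:Deltah-Def} yields the error equation $\partial_t\theta=\Ah\theta-Q_h\partial_t\eta$, where $Q_h$ denotes the $L^2$-orthogonal projection onto $\nVh$ and $\Ah$ is as in \eqref{eq:AhBh}. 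Duhamel's formula and the $L^2$- and $H^1$-conservation of $\EAh$ (Proposition~\ref{thm:discrete-conservation}), together with the $H^1$-stability of $Q_h$ on the quasi-uniform mesh, give
\begin{equation*}
\|e_A(\t)\|_{L^2}\le\int_0^\t\|\partial_\s\eta(\s)\|_{L^2}\,\dd\s\,,\qquad
\|e_A(\t)\|_{H^1}\le\int_0^\t\|\partial_\s\eta(\s)\|_{H^1}\,\dd\s\,.
\end{equation*}
Since $\partial_\s v=\tfrac{\ii}{2}\Delta v$ and $\EA(\s)$ is a bijective isometry of every $H^k$, one has $\partial_\s\eta(\s)=\tfrac{\ii}{2}(\mathrm{Id}-\Ph)\Delta v(\s)$ with $\|\Delta v(\s)\|_{H^{s-2}}\le C\|u\|_{H^{s}}$, and Theorem~\ref{thm:Brenner-Scott-Ph-conv} then produces $\|e_A(\t)\|_{L^2}\le C\,\t\,h^{s-2}\|u\|_{H^{s}}$ and the analogous $H^1$-bound. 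This is already Proposition~\ref{thm:EAdiff} (with its regularity index taken equal to $s-2$), but it falls two powers of $h$ short of the present claim.

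The point of the present Proposition is to trade that factor $\t$ for the two missing powers of $h$, i.e.\ to show that the per-step error of $\EAh$ relative to $\Ph\,\EA$ is of the optimal size $h^{s}$ ($h^{s-1}$ in $H^1$) rather than $\t\,h^{s-2}$, without strengthening the hypothesis. To this end I would either (i) exploit the algebraic identity $e_A(\t)=\Delta_h^{-1}\bigl(Q_h\EA(\t)\Delta u-\EAh(\t)Q_h\Delta u\bigr)$ --- valid because $\Delta_h\Ph=Q_h\Delta$, $\Delta_h^{-1}$ commutes with $\EAh$ on $\nVh$, and $\Delta_h^{-1}$ annihilates the $L^2$-complement of $\nVh$, so that the genuinely continuous contribution $(\mathrm{Id}-Q_h)\EA(\t)\Delta u$ drops out --- which reduces the claim to negative-order bounds for an $e_A$-type quantity with datum $\Delta u\in H^{s-2}$, to be obtained by an Aubin--Nitsche duality using \eqref{eq:Ph-Deltahi} and the $H^2$-regularity estimate \eqref{eq:Omega-property}; or (ii) pass to the eigenbases of $-\Delta$ and $-\Delta_h$, split the frequency range at $\sim h^{-1}$, estimate the eigenvalue (phase) and eigenfunction errors on the resolved part and bound the unresolved part by the $H^{s}$-tail of $u$. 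The $H^1$-bound follows along the same lines, now using in addition the $H^1$-stability of $\EAh$. In the regime $\ell\ge p+3$ one keeps instead the sharper, $\t$-weighted Proposition~\ref{thm:EAdiff}; the quantity $s=\min\{p+1,\ell\}$ records whichever of the approximation order and the available regularity is binding.

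I expect the hard part to be exactly this trade-off. Because $\partial_t v=\tfrac{\ii}{2}\Delta v$, the consistency defect $(\mathrm{Id}-\Ph)\partial_t v$ loses two derivatives, so a straightforward Gronwall/Duhamel estimate can avoid neither the extra regularity nor the factor $\t$; recovering the optimal $h$-power uniformly in $\t$ forces one to exploit oscillatory cancellation in the Duhamel integral $\int_0^\t\EAh(\t-\s)(\,\cdot\,)\,\dd\s$, which is precisely what the spectral-approximation (or negative-norm duality) step does, and controlling all the mesh-dependent constants there --- via the inverse inequalities on the quasi-uniform family and the stability of $\EAh$, $\Ph$ and $Q_h$ --- is the delicate point.
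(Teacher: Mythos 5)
Your setup (the error equation for $e_A$, Galerkin orthogonality, and the observation that the naive Duhamel/Gronwall route only yields $\t\,h^{s}\|u\|_{H^{s+2}}$, i.e.\ Proposition~\ref{thm:EAdiff}) matches the paper. But for the statement actually at issue you do not give a proof: you offer two candidate strategies --- an algebraic reduction via $\Delta_h^{-1}$ and $Q_h$ followed by Aubin--Nitsche duality in negative norms, or a spectral decomposition with a frequency cut at $h^{-1}$ --- and neither is carried out. Both would require substantial further machinery that you do not supply (a negative-norm error estimate for the semidiscrete Schr\"odinger evolution in the first case; FEM eigenvalue/eigenfunction approximation theory and control of the resulting mesh-dependent constants in the second). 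As it stands the argument for the claimed bound $\|e_A(\t)\|_{L^2}\le C\,h^s\|u\|_{H^s}$ is missing.

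Moreover, your diagnosis of where the difficulty lies overshoots. The paper obtains the result by a short, elementary device: starting from the same energy identity $\partial_t\|e_A(t)\|_{L^2}^2=2\,\Re\,(\Ph\,\partial_t y-\partial_t y,\,e_A)_{L^2}$, it integrates in time and then \emph{integrates by parts in $t$}, moving the time derivative off the projection defect $(\Ph-\mathrm{Id})\,y$ and onto $e_A$. After this step only $(\Ph-\mathrm{Id})\,y(\s)$ itself appears (no time derivative, hence no invocation of $\partial_t y=\tfrac{\ii}{2}\Delta y$ and no loss of two spatial derivatives), and Theorem~\ref{thm:Brenner-Scott-Ph-conv} together with the $H^s$-conservation of $\EA$ gives $\|e_A(t)\|_{L^2}^2\le C\,h^s\sup_\s\|y(\s)\|_{H^s}\,\|e_A(t)\|_{L^2}$, using $\int_0^t\partial_\s e_A\,\dd\s=e_A(t)$ since $e_A(0)=0$; the $H^1$-bound then follows from the inverse estimate (Theorem~\ref{thm:Brenner-Scottinv}) because $e_A(\t)\in\nVh$. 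No oscillatory cancellation in the Duhamel integral, no duality argument, and no spectral splitting is needed --- contrary to your claim that a ``straightforward'' estimate cannot avoid the factor $\t$ and the extra regularity. To repair your proposal, replace the two sketched strategies by this temporal integration by parts in the energy identity.
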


The proof is given after the proof of Proposition~\ref{thm:EAdiff}.

\begin{proof}[Proof of Proposition~\ref{thm:EAdiff}]
	We denote
	\begin{equation*}
	y(t)=\EA(t)u\,, \quad y_h(t) = \EAh(t) \Ph\,u\,,
	\quad \text{such that}~~ e_A(t) = \Ph\,y(t) - y_h(t)\,.
	\end{equation*}
	Here, $ e_A(0)=0 $,
	and $y(t)$ and $y_h(t)$ are the solutions of
	\begin{align*}
	(\partial_t\,y(t), v)_{L^2} + \half\,\ii\,(\nabla y(t),\nabla v)_{L^2} &= 0
	\quad \text{for all} ~~ v \in H^1_0\,, 
	\\
	(\partial_t\,y_h(t),v_h)_{L^2} + \half\,\ii\,(\nabla y_h(t),\nabla v_h)_{L^2} &= 0
	\quad \text{for all} ~~ v \in \nV_h\,. 
	\end{align*}
	Setting $v=v_h=e_A(t)$ and subtracting these equations we obtain
	\begin{equation*}
	(\partial_t (y(t)-y_h(t)),e_A(t))_{L^2}
	+ \half\,\ii\,\big( \nabla y(t) - \nabla y_h(t), \nabla e_A(t))_{L^2} = 0\,.
	\end{equation*}
	With $y(t)-y_h(t) = e_A(t) - (\Ph\,y(t)-y(t))$ this takes the form
	\begin{align*}
	(\partial_t\,e_A(t),e_A(t))_{L^2}
    - (\partial_t (\Ph\,y(t)-y(t)),e_A(t))_{L^2}
	+ \half\,\ii\,(\nabla e_A(t),\nabla e_A(t))_{L^2}
    - (\nabla\Ph\,y(t)-\nabla y(t),\nabla e_A(t))_{L^2} &=0\,.
	\end{align*}
	Due to the property~\eqref{eq:Ph-def} of the Rayleigh-Ritz projection we have
	$(\nabla\Ph\,y(t)-\nabla y(t),\nabla e_A(t))_{L^2}=0$. Thus we obtain
	\begin{align*}
	(\partial_t\,e_A(t),e_A(t))_{L^2}
    + \half\,\ii\,(\nabla e_A(t),\nabla e_A(t))_{L^2}
	&= (\Ph\,\partial_t\,y(t) - \partial_t\,y(t), e_A(t))_{L^2}\,, \\
	\intertext{and its complex conjugate}
	(e_A(t),\partial_t\,e_A(t))_{L^2}
    - \half\,\ii\,(\nabla e_A(t), \nabla e_A(t))_{L^2}
	&= (e_A(t),\Ph\,\partial_t\,y(t) - \partial_t\,y(t))_{L^2}\,.
	\end{align*}
	Adding these equations gives
	\begin{align}
	2\,\Re(\partial_t\,e_A(t),e_A(t))_{L^2}
    &= 2\,\Re\,(\Ph\,\partial_t\,y(t) - \partial_t\,y(t),e_A(t))_{L^2} \,.\label{eq:pt_e_e}
	\end{align}
	For the left-hand side of~\eqref{eq:pt_e_e} we find
	\begin{equation*}
	2\,\Re(\partial_t\,e_A(t),e_A(t))_{L^2}
	= \partial_t\| e_A(t) \|^2_{L^2} = 2\,\| e_A(t) \|_{L^2}\,\partial_t\| e_A(t)\|_{L^2}\,,
	\end{equation*}
	and on the right-hand side of~\eqref{eq:pt_e_e} we apply the Cauchy-Schwarz and H{\"o}lder inequalities,	
	\begin{align*}
	2\,\Re\,(\Ph\,\partial_t\,y(t) - \partial_t\,y(t), e_A(t))_{L^2}
    & \leq 2\,\big(|\Ph\,\partial_t\,y(t) - \partial_t\,y(t)|, |e_A(t)|\big)_{L^2}\\
	& \leq 2\,\| \Ph\,\partial_t\,y(t) - \partial_t\,y(t) \|_{L^2}\,\| e_A(t) \|_{L^2}\,.
	\end{align*}
	Hence,~\eqref{eq:pt_e_e} yields the inequality
	\begin{align*}
	2\,\| e_A(t) \|_{L^2}\,\partial_t\| e_A(t)\|_{L^2}
    &\leq 2\,\| \Ph\,\partial_t\,y(t) - \partial_t\,y(t) \|_{L^2}\,\| e_A(t) \|_{L^2}\,.
	\end{align*}
	Dividing by $2\,\|e_A(t)\|_{L^2}$ we obtain
	\begin{equation*}
	\partial_t\| e_A(t)\|_{L^2} \leq \| \Ph\,\partial_t\,y(t) - \partial_t\,y(t) \|_{L^2} \,.
	\end{equation*}
	Now we apply the bound for the projection operator from Theorem~\ref{thm:Brenner-Scott-Ph-conv}, yielding
	\begin{equation*}
	\partial_t \| e_A(t)\|_{L^2}\leq  \| \Ph\,\partial_t\,y(t) - \partial_t\,y(t) \|_{L^2}
	\leq C\,h^s\,\|  \partial_t\,y(t) \|_{H^{s}}\,,
	\quad s=\min\{p+1,\ell\}\,,
	\end{equation*}
	where $l$ is the degree of maximal Sobolev regularity of $\partial_t\,y(t)$.
	Now we integrate over $\s$ from $0$ to $t$,
	\begin{equation*}
	\| e_A(t) \|_{L^2} - \| e_A(0)\|_{L^2} \leq C \int_0^t h^s\,\| \partial_t\,y(\s) \|_{H^s}\,\dd \s\,
	\end{equation*}
	 and use the differential equation $\partial_t\,y(t) = \frac{1}{2}\,\ii\,\Delta\,y(t)$, which yields
	\begin{equation*}
	\| e_A(t) \|_{L^2} \leq \| e_A(0)\|_{L^2}+ C \int_0^t h^{s}\,\| \Delta y(\s) \|_{H^{s}}\,\dd \s\,.
	\end{equation*}
	With $ e_A(0)=0 $, and taking the supremum over the integrand we finally obtain
	the $ L^2$-estimate~\eqref{eq:EAdiff-L2}.
	
	The $H^1$-estimate~\eqref{eq:EAdiff-H1} now follows directly from the inverse estimate, Theorem~\ref{thm:Brenner-Scottinv}.
\end{proof}

\begin{proof}[Proof of Proposition~\ref{thm:EAdiff-weak}]
	We start as in the proof of Proposition~\ref{thm:EAdiff}
	and recall~\eqref{eq:pt_e_e} in the form
	\begin{equation*}
	\partial_t\| e_A(t)\|_{L^2}^2 =
	2\,\Re\,(\Ph\,\partial_t y(t) - \partial_t y(t),e_A(t))_{L^2}\,.
	\end{equation*}
	By integrating over $\s$ from $0$ to $t$,
	applying partial integration and using H{\"o}lder's inequality we obtain
	\begin{align*}
	\| e_A(t) \|_{L^2}^2 - \| e_A(0)\|_{L^2}
	&= 2\,\int_0^t \Re(\Ph\,\partial_\s y(\s) - \partial_\s y(\s),e_A(\s))_{L^2}\,\dd\s
    \leq 2 \,\Re\,\int_{\Omega} \Big|\int_0^t \partial_\s (\Ph\,y(\s) -  y(\s))\,\overline{e_A(\s)}\,\dd\s\,\Big|\,\dd x \\
	& \leq 2\,\Re\int_{\Omega}  \Big|\Big[\big(\Ph\,y(\s)
    -  y(\s)\big)\,\overline{e_A(\s)}\Big]_0^t - \int_0^t(\Ph\,y(\s) - y(\s))\,\partial_\s\overline{e_A(\s)}\,\dd\s\,\Big|\,\dd x \\
	& \leq 2\,\Big(\|\Ph\,y(t) -  y(t)\|_{L^2}\,\| e_A(t)\|_{L^2}
	+ \int_{\Omega}\,\sup_{0 \leq \s \leq t} \big|\Ph\,y(\s) -  y(\s)\big|\,\cdot\,\Big|
	\int_0^t \partial_\s e_A(\s)\,\dd\s\,\Big|\,\dd x \Big) \\
	& \leq 2\,\Big(\|\Ph\,y(t) -  y(t)\|_{L^2}\,\| e_A(t)\|_{L^2} + \sup_{0\leq \s \leq t}\,\|\Ph\,y(\s) - y(\s)\big\|_{L^2}\,\| e_A(t)\|_{L^2} \Big) \\
	& \leq C\,h^{s} \sup_{0\leq \s \leq t}\,\| y(\s)\|_{H^s}\,\cdot\,\| e_A(t)\|_{L^2}\,,
	\end{align*}
	for $C$ depending on $d$ and $\Omega$.
	Since $e_A(0)=0$, dividing by $\| e_A(t)\|_{L^2}$ results in
	the $L^2$-estimate of Proposition~\ref{thm:EAdiff-weak}.
	
	The $H^1$-estimate now follows directly from the inverse estimate, Theorem~\ref{thm:Brenner-Scottinv}.
\end{proof}

The major part of the proof of Theorem~\ref{thm:convergence-complete}
relies on the following consistency result for the splitting operator~$\nS_h$.
\begin{theorem}
\label{thm:interpS-err}
Let $u\in H^{l}$ for $l\geq 1$, and \,$\| u \|_{H^{s}}\leq M_{s}$ for $s=\min\{l,p+1\}$.
Then, the difference $\nS_h(\t,\Ph\,u) -\Ph S(\t,u)$ is bounded by
\begin{equation*}
\begin{aligned}
\| \nS_h(\t,\Ph\,u) - \Ph\,\nS(\t,u) \|_{L^2} &\leq C\,\t\,(1+\tfrac{1}{\t})^\beta\, h^s\,,\\
\| \nS_h(\t,\Ph\,u) - \Ph\,\nS(\t,u) \|_{H^1} &\leq C\,\t\,(1+\tfrac{1}{\t})^\beta\,h^{s-1}\,,
\end{aligned}
\qquad
\,\beta=\max\{0,\sgn(p+3-l)\}\,,
\end{equation*}
where $C$ depends on $\|u\|_{H^{s+2(1-\beta)}}$, $d$ and $\Omega$.
\end{theorem}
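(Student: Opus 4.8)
The plan is to separate the errors of the two linear (Laplacian) half-steps from the error of the nonlinear $B$-flow. Put $v=\eA\,u$ and $v_h=\eAh\,\Ph\,u$, so that $\Ph\,\nS(\t,u)=\Ph\,\eA\,\EB(\t,v)$ and $\nS_h(\t,\Ph\,u)=\eAh\,\EBh(\t,v_h)$; inserting $\eAh\,\Ph\,\EB(\t,v)$ gives
\begin{equation*}
\nS_h(\t,\Ph\,u)-\Ph\,\nS(\t,u)
=\eAh\big(\EBh(\t,v_h)-\Ph\,\EB(\t,v)\big)
-\big(\Ph\,\eA-\eAh\,\Ph\big)\EB(\t,v)\,.
\end{equation*}
The second term is $-e_A(\half\t)$ in the sense of~\eqref{eq:e(tau)} with datum $\EB(\t,v)$; I would bound it by the combined form of Propositions~\ref{thm:EAdiff} and~\ref{thm:EAdiff-weak} (invoking the first if $\ell\geq p+3$, i.e.\ $\beta=0$, and the second otherwise), using $\|\EB(\t,v)\|_{H^{k}}\leq C\,\|v\|_{H^{k}}=C\,\|u\|_{H^{k}}$ (the $H^k$-regularity of the $B$-flow, cf.\ Proposition~\ref{thm:Hm-regularity}, together with the $H^k$-isometry of $\EA$) to obtain $C\,\t\,(1+\tfrac1\t)^\beta h^s\,\|u\|_{H^{s+2(1-\beta)}}$ in $L^2$ and the analogous $h^{s-1}$-bound in $H^1$. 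Since $\eAh$ is unitary in $L^2$ and in $H^1$ (Proposition~\ref{thm:discrete-conservation}), the first term has the same norms as $\EBh(\t,v_h)-\Ph\,\EB(\t,v)$, which is all that remains to estimate.

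For this I would split $\EBh(\t,v_h)-\Ph\,\EB(\t,v)=\big(\EBh(\t,v_h)-\EBh(\t,\Ph v)\big)+\big(\EBh(\t,\Ph v)-\Ph\,\EB(\t,v)\big)$. The first summand is handled by the Lipschitz stability of $\EBh(\t,\,\cdot\,)$ in its argument --- exactly the estimate extracted inside the proof of Proposition~\ref{thm:Sh-stability} --- applied to $v_h-\Ph v=\eAh\,\Ph\,u-\Ph\,\eA\,u=-e_A(\half\t)$, which is once more controlled by Propositions~\ref{thm:EAdiff}/\ref{thm:EAdiff-weak}; the stability constant stays bounded because $\|v_h\|_{H^1},\|\Ph v\|_{H^1}\leq C\|u\|_{H^1}\leq C M_s$. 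The second summand, $\EBh(\t,\Ph v)-\Ph\,\EB(\t,v)$, is the genuine space consistency of the discretized nonlinear flow, and is where the work lies.

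To estimate it I would exploit that $|\psi(\,\cdot\,,t)|^2$ is conserved along both the exact and the discrete $B$-flow, so that $\EB(t,v)=\eb(t,v)\,v=\ee^{-\ii t\,\DP(|v|^2)}\,v$ with a \emph{time-independent} potential, and $\EBh(t,\Ph v)$ solves the linear Galerkin problem with time-independent discrete potential $\DPh(|\Ph v|^2)$. Setting $\rho_h(t):=\EBh(t,\Ph v)-\Ph\,\EB(t,v)\in\nV^h$ (so $\rho_h(0)=0$) and running an energy estimate (equivalently, the mild/variation-of-constants formulation used in the proof of Proposition~\ref{thm:Sh-stability}), the driving term $\DPh(|\Ph v|^2)\,\EBh(t,\Ph v)-\DP(|v|^2)\,\EB(t,v)$ is written as $\DPh(|\Ph v|^2)\,\rho_h(t)$ plus consistency terms, using $\EBh(t,\Ph v)-\EB(t,v)=\rho_h(t)+(\Ph-\mathrm{Id})\EB(t,v)$ and the decomposition $\DPh(|\Ph v|^2)-\DP(|v|^2)=\DPh(|\Ph v|^2-|v|^2)+(\Ph-\mathrm{Id})\DP(|v|^2)$ (via $\DPh=\Ph\,\DP$, Remark~\ref{rem:Ph} and~\eqref{eq:Ph-Deltahi}; it is essential to extract the source difference first, so that the remaining $(\Ph-\mathrm{Id})\DP$ acts on the regular source $|v|^2$ and not on the only-$H^1$-regular $|\Ph v|^2$), together with the Rayleigh--Ritz defect $(\Ph-\mathrm{Id})\big(\DP(|v|^2)\EB(t,v)\big)$ coming from $\Ph\,\partial_t\EB$. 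The term $\DPh(|\Ph v|^2)\,\rho_h$ contributes only a bounded factor $\ee^{C\t}$ (in $L^2$ it drops out since $\DPh(|\Ph v|^2)$ is real; in $H^1$ the residual gradient cross term is absorbed by Gronwall), since $\|\DPh(|\Ph v|^2)\|_{L^\infty}\leq C\|v\|_{H^1}^2$ (Theorem~\ref{thm:Brenner-Scott1} and the $H^2$-regularity of $\DP$); all the other terms are $O(h^s)$ in $L^2$ and $O(h^{s-1})$ in $H^1$ by the approximation estimates for $\Ph$ (Theorems~\ref{thm:Brenner-Scott1}, \ref{thm:Brenner-Scott-Ph-conv}), Proposition~\ref{thm:Hm-regularity}, the inverse estimate (Theorem~\ref{thm:Brenner-Scottinv}), and the product and smoothing estimates for $\DP,\DPh$ (Propositions~\ref{thm:DPh-L2-est}, \ref{thm:Hmin1-est}, Corollary~\ref{thm:DPh-H1-est-corollary}, and~\eqref{eq:Omega-property}); integrating over the step produces the extra $\t$, so $\|\rho_h(\t)\|_{L^2}\leq C\t h^s$ and $\|\rho_h(\t)\|_{H^1}\leq C\t h^{s-1}$. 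In particular this consistency term carries \emph{no} $(1+\tfrac1\t)^\beta$ factor --- that factor is inherited solely from the two $e_A$-type contributions. Adding the three pieces yields the asserted bounds, with $C$ depending on $\|u\|_{H^{s+2(1-\beta)}}$, $M_s$, $d$, $\Omega$ (and $T$, through the bounded factor $\ee^{C\t}$).

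The hard part will be the $H^1$ estimate of the consistency term $\EBh(\t,\Ph v)-\Ph\,\EB(\t,v)$: one must control, uniformly in $h$, the $H^1$-norms of products of the form $\big(\nabla\DPh(|\Ph v|^2-|v|^2)\big)\,w$ and $\DPh(\,\cdot\,)\,\nabla w$ (with $w$ running over the discrete $B$-flow), which forces the use of the $L^\infty$- and $L^4$-type bounds on $\DP,\DPh$ and their gradients collected in the appendix and of the regularity~\eqref{eq:Omega-property}, and at the same time account correctly for the single lost power of $h$ (via $\|(\Ph-\mathrm{Id})g\|_{H^1}\leq C\,h^{s-1}\|g\|_{H^s}$ and, where needed, the inverse estimate) without spoiling the $h$-independence of the constant. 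The low-regularity endpoint $\ell=1$ (hence $s=1$, $\beta=1$), where $|v|^2$ just fails to lie in $H^1$ in three space dimensions, likewise has to be covered by the dedicated appendix estimates rather than by plain Sobolev multiplication.
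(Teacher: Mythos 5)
Your decomposition is, after regrouping, exactly the one used in the paper: your two $e_A$-type pieces are the paper's terms~\eqref{eq:Sint-line1} and~\eqref{eq:Sint-line5} (handled by Propositions~\ref{thm:EAdiff} and~\ref{thm:EAdiff-weak}), your Lipschitz-stability piece is~\eqref{eq:Sint-line2}, and your consistency term $\EBh(\t,\Ph v)-\Ph\,\EB(\t,v)$ is the sum of~\eqref{eq:Sint-line3} and~\eqref{eq:Sint-line4}, estimated with the same variation-of-constants/energy argument, the same splitting of the potential defect via $\Bt_h(w)=\Ph\,\Bt(w)$, and the same appendix bounds, with the $(1+\tfrac1\t)^\beta$ factor correctly traced to the $e_A$-contributions only. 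The one place you make life harder than necessary is the $H^1$ bound: since $\nS_h(\t,\Ph u)-\Ph\,\nS(\t,u)\in\nVh$, the paper obtains it in one line from the $L^2$ bound via the inverse estimate (Theorem~\ref{thm:Brenner-Scottinv}), so the direct $H^1$ energy estimate of the consistency term that you flag as "the hard part" can be dispensed with entirely.
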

\begin{proof}
We consider the specific errors of the subflows $\EA$ and $\EB$ and take account of the special structure of the Strang splitting operator.
The difference $\EBh(\t,\Ph\,u) - \Ph \EB(\t,u)$ is recast as a sum of terms with appropriate asymptotics in terms of $\t$ and $h$.
In the final step, we use the unitarity of $\EBh$ and $\ebh$. We abbreviate $w_h=\EAh (\half \t) \Ph\,u$ and $w=\EA(\half \t) u$ and obtain
\begin{subequations}
\label{eq:Sint}
\begin{align}
&\| \nS_h(\t,\Ph\,u) -\Ph\,\nS (\t,u) \|_{L^2} \notag \\
& \quad = \| \EAh (\half \t) \EBh(\t,w_h ) - \Ph \EA (\half \t) \EB(\t,w)\|_{L^2}\notag \\
& \quad \leq  \| \EAh (\half \t) \EBh(\t,w_h ) - \EAh (\half \t) \EBh(\t,\Ph w ) \|_{L^2} \notag \\
& \qquad {}	+ \| \EAh (\half \t) \EBh(\t,\Ph w ) - \EAh (\half \t)\Ph \EB(\t,w )\|_{L^2}\notag \\
 & \qquad {} + \| \EAh (\half \t)\Ph \EB(\t,w ) -\Ph \EA (\half \t) \EB(\t,w)\|_{L^2} \notag\\
& \quad \leq \| \ebht{w_h} w_h-\ebht{w_h} \Ph w \|_{L^2}  + \| \ebht{w_h}\Ph w- \ebht{\Ph w}\Ph w \|_{L^2}\notag \\
 & \qquad {} + \| \ebht{\Ph w}\Ph w- \ebt{w}\Ph w \|_{L^2}  + \| \ebt{w}\Ph w- \Ph\big(\ebt{w} w\big)\|_{L^2}\notag \\
 & \qquad {} + \| \EAh (\half \t) \Ph \EB(\t,w )-\Ph \EA (\half \t)\EB(\t,w)\|_{L^2} \notag\\
& \quad\leq \|w_h- \Ph w \|_{L^2}\label{eq:Sint-line1} \\
 & \quad {} + \| \ebht{w_h}\Ph w-\ebht{\Ph w_h} \Ph w \|_{L^2}\label{eq:Sint-line2} \\
 & \quad {} +\| \ebht{\Ph w}\Ph w- \ebt{w}\Ph w \|_{L^2} \label{eq:Sint-line3}\\
 & \quad {} +\| \ebt{w}\Ph w - \Ph\big(\ebt{w} w\big)\|_{L^2} \label{eq:Sint-line4}\\
 & \quad {} + \|\big( \EAh (\half \t)\Ph -\Ph \EA (\half \t)\big)\EB(\t,w )\|_{L^2} \,.\label{eq:Sint-line5}
\end{align}
\end{subequations}
Now, we consider the expressions~\eqref{eq:Sint} and obtain the following five estimates.
\begin{itemize}
	\item  For~\eqref{eq:Sint-line1} we use the Propositions~\ref{thm:EAdiff}
    and~\ref{thm:EAdiff-weak} and obtain
	\begin{equation*}
	\| w_h-\Ph  w \|_{L^2}
    \leq C\,\t\,h^{s}(\tfrac{1}{\t})^\beta\,\| u\|_{H^{s+2(1-\beta)}}\,,
	\end{equation*}
	where $s=\min\{l,p+1\}$ and $ \beta=\max\{0,\sgn(p+3-l)\}$.
	\item For~\eqref{eq:Sint-line2}, we use the linear variation-of-constant formula as in~\eqref{eq:B-B-diff}, with arguments $u=\Ph w$, $\wu=w_h$, and obtain
	\begin{align*}
	&\ebht{\Ph w}\Ph w- \ebht{w_h}\Ph w \\
	& \qquad {} =\int_0^{\t} \ebh(\t-\s,w_h)
              \big(\Bt_h(\Ph w)-  \Bt_h(w_h)\big) \ebh(\s,\Ph w) \Ph w\,\dd \s\,.
	\end{align*}
	Hence
	\begin{align*}
	&\| \ebht{w_h} \Ph w-\ebht{\Ph w}\Ph w \|_{L^2} \leq  \t \sup_{0\leq \s\leq\t} \big\|\big(\Bt_h(w_h ) - \Bt_h(\Ph w)\big)\EBh(\s,\Ph w)\big\|_{L^2}\,,
	\end{align*}
	since $\ebh$ is unitary.
	We now proceed similarly as in Appendix~\ref{sec:auxiliary}.
    By the same argument as in~\eqref{eq:Bt-linear} we obtain
	\begin{align*}
    \qquad
	\| (\Bt_h(w_h )- \Bt_h(\Ph w))\EBh(\s,\Ph w)\|_{L^2}
	&= \|\DPh(|w_h|^2 - |\Ph w|^2)\,\ebh(\s,\Ph w) \,\Ph w\|_{L^2} \\
	&\leq \|\DPh(|w_h - \Ph w|\,|w_h+ \Ph w|) \cdot \ebh(\s,\Ph w)\,\Ph w\|_{L^2}\,.
	\end{align*}
	Via Propositions~\ref{thm:DPh-L2-est} and~\ref{thm:EBh-H1-est}
    this can further be bounded by
	\begin{equation*}
	C\,\|w_h - \Ph w\|_{L^2}\,\|w_h+\Ph w\|_{H^1}\,\ee^{\s\, C\|u\|_{H^1}^2}\,\| u\|_{H^1}\,.
	\end{equation*}
	Now we use Propositions~\ref{thm:EAdiff} and~\ref{thm:EAdiff-weak}
    applied to the term $ \|w_h - \Ph w\|_{L^2}$
    in combination with the conservation properties
    of $\EA$ and $\EAh$ (see~\eqref{eq:semidiscrete-conservation-A-L2} and~\eqref{eq:discrete-conservation-A}) and obtain
	\begin{align}
	& \|w_h  - \Ph w\|_{L^2}\, \|w_h + \Ph w\|_{H^1}\,\ee^{\s\, C\|u\|_{H^1}^2}\,\| u\|_{H^1} \notag\\
	& \quad {} \leq C\,\t\,(\tfrac{1}{\t})^\beta\, h^{s}\,\| u\|_{H^{s+2(1-\beta)}} \big( \|\Ph\,u\|_{H^1} + \| \Ph w\|_{H^1} \big)\,\ee^{\s\, C\|u\|_{H^1}^2}\,\| u\|_{H^1}\,, \label{eq:wh-previous}
	\end{align}
	where $s=\min\{l,p+1\}$ and $ \beta=\max\{0,\sgn(p+3-l)\}$.
	
	The projection property~\eqref{eq:Proj-prop} and~\eqref{eq:wh-previous} yield
	\begin{equation}
	\begin{aligned}
	&\| \ebht{w_h} \Ph w -\ebht{\Ph w}\Ph w \|_{L^2}   \\
	&\qquad \leq C\,\t^2\,(\tfrac{1}{\t})^\beta\, h^{s}\,\|u\|_{H^{s+2(1-\beta)}}\big( \|\Ph\,u\|_{H^1} + \| \Ph w\|_{H^1} \big)\,\ee^{\t\, C\|u\|_{H^1}^2}\,\| u\|_{H^1}\\
	&\qquad \leq C\,\t^2\,(\tfrac{1}{\t})^\beta\, h^{s}\,\ee^{\t\, C\|u\|_{H^1}^2}\,\|u\|_{H^{s+2(1-\beta)}}\,\| u\|_{H^1}^2 \,.
	\end{aligned}
	\end{equation}
	\item
	For~\eqref{eq:Sint-line3} we use variation of constants as in~\eqref{eq:B-Bh-diff},
	\begin{align*}
	&\ebht{\Ph w}\Ph w-\ebt{ w}\Ph w  = \int_0^\t \ebh(\t-\s,\Ph w) \big(\Bt_h(\Ph w)- \Bt(w)\big)\eb(\s,w) \Ph w\,\dd\s \,.
	\end{align*}
	Hence, with $\Bt_h(w)=\Ph \Bt(w) $,
	\begin{align*}
	& \| \ebht{\Ph w} \Ph w - \ebt{w} \Ph w \|_{L^2}
	  \leq C \,\t \sup_{0 \leq \s \leq \t}\,\|( \Bt_h(\Ph w) - \Bt(w) ) \eb(\s,w) \Ph w \|_{L^2} \\
	& \qquad \leq C\,\,\t \sup_{0 \leq \s \leq \t}\big(\| (\Bt_h (\Ph w) - \Bt_h(w))\Ph w\|_{L^2}
	                           +\| (\Bt_h ( w) - \Bt(w))\Ph w\|_{L^2}\big) \\
	& \qquad = C\,\t \sup_{0 \leq \s \leq \t}\big( \| (\Bt_h (\Ph w) - \Bt_h(w))\Ph w\|_{L^2}
	                           + \| (\Ph(\Bt(w)) - \Bt(w))\Ph w\|_{L^2} \big)\,.
	\end{align*}
    Now we separately estimate the two contributions on the right-hand side.
	Analogously as for~\eqref{eq:Sint-line2}, we have
	\begin{align*}
	\| (\Bt_h (\Ph w) - \Bt_h(w))\Ph w\|_{L^2} & \leq C\,\| \Ph w - w\|_{L^2}\,\| \Ph w + w\|_{H^1}\,\|\Ph w\|_{H^1}\\
	& \leq C\,h^{s}\,\| w \|_{H^{s}}\big(\| w \|_{H^1} + \|\Ph w\|_{H^1}\big)\| \Ph w\|_{H^1}\\
	& \leq C\,h^{s}\,\| w \|_{H^{s}}\,\| w \|_{H^1}^2\,.
	\end{align*}
	For the second contribution
    we make use of an estimate based on Theorem~\ref{thm:Brenner-Scott1} and the Sobolev embeddings of $H^1$ in $L^4$ and $H^{s+1}$ in $W^{s}_4$,
	\begin{align*}
	\| (\Ph(\Bt ( w)) - \Bt(w))\Ph w\|_{L^2} & \leq \| \Ph(\Bt ( w)) - \Bt(w)\|_{L^4}\,\|\Ph w\|_{L^4}\\
    & \leq C\,h^{s}\,\| \Bt(w) \|_{W^{s}_4}\,\| \Ph w\|_{L^4} \\
    & \leq C\,h^{s}\,\| \Bt(w) \|_{H^{s+1}}\,\| \Ph w\|_{H^1} \\
	& \leq C\,h^{s}\,\| |w|^2 \|_{H^{s-1}}\,\| \Ph w \|_{H^1} \\
	& \leq C\,h^{s}\,\| w \|_{H^{\eta_1}}^2 \| w \|_{H^1}\,, \quad \eta_1=s - \chi_{[3,\infty]}(s)\,,
	\end{align*}
	where $\chi$ is the indicator function.
    The two values of $\eta_1$ are related to different bounds for $s\in\{1,2\}$, $\| u\, v \|_{H^{s-1}}\leq C\,\| u\|_{H^{s}}\,\|u \|_{H^{s}}$, while for higher values of $s$, the bounds are valid with $\| u\,v\|_{H^s} \leq C\,\|u \|_{H^s}\,\|v\|_{H^s}$.
	
	Altogether this yields
	\begin{align*}
	\| \ebht{\Ph w} \Ph w- \ebt{w} \Ph w\|_{L^2} \leq C \,\t\, h^{s}\,\big(\| w \|_{H^{\eta_1}}^2 \| w \|_{H^1}  + \| w \|_{H^{s}}\,\| w \|_{H^1}^2\big)
	\end{align*}
	with $ \eta_1=s - \chi_{[3,\infty]}(s) $.
	
	\item For~\eqref{eq:Sint-line4}, we apply variation of constants as for~\eqref{eq:Sint-line3} and obtain
	\begin{align*}
	& \| \ebt{w}\Ph w - \Ph(\ebt{w} w)\|_{L^2} \\
	& \quad \leq C\,\t\,\| \Bt(w)\Ph( \ebt{w} w)- \Ph ( \Bt(w)\ebt{w} w)\|_{L^2}\\
	& \quad\leq C\,\t\,\big(\| \Bt(w)\Ph( \ebt{w} w) - \Bt(w)\ebt{w} w\|_{L^2} \\
	& \quad\qquad\quad {} + \| \Bt(w)\ebt{w} w - \Ph ( \Bt(w)\ebt{w} w)\|_{L^2}\big)\\
	& \quad\leq C\,\t\,\big( \| \Bt(w)\|_{H^2} \| \Ph(\ebt{w} w )- \ebt{w} w\|_{L^2} \\
	& \quad\qquad\quad {} + \| \Bt(w)\ebt{w} w - \Ph ( \Bt(w)\ebt{w} w)\|_{L^2}\big)\\
	& \quad\leq C\,\t\,\big(  \| |w|^2\|_{L^2}\,h^{s}\,\| \ebt{w} w\|_{H^{s}} + h^{s}\,\| \Bt(w)\ebt{w} w \|_{H^{s}}\big)\\
	& \quad\leq C\,\t\,h^{s} \big(  \| w\|_{H^1}^2 \| \ebt{w} w\|_{H^{s}} + \| \Bt(w) \ebt{w} w\|_{H^{s}} \big)\,.
	\end{align*}
	Now we consider $\| \Bt(w)  \ebt{w} w\|_{H^{s}}$ in more detail for different values of $s$,
	\begin{align*}
	\| \Bt(w)  \ebt{w} w\|_{H^{s}} & \leq \| \DP |w|^2\|_{H^{\kappa}}\,\|  \ebt{w} w\|_{H^{s}}  \qquad \kappa = \max\{s,2\}\\
	& \leq \| w\|_{H^{\eta_2}}^2 \,\ee^{\t\, L_s} \| w\|_{H^{s}}\,, \qquad \eta_2 = s - \chi_{[2,\infty]}(s) - \chi_{[4,\infty]}(s)\,,
	\end{align*}
	with $L_s$ from~\eqref{eq:Hm-reg-bound} and where $\chi$ is the indicator function.
	Hence,
	\begin{equation*}
	\big\| \ebt{w} \Ph w - \Ph\big(\ebt{w} w\big)\big\|_{L^2}
	\leq C\,\t\,h^{s}\,\ee^{\t\, L_s}\,\| u\|_{H^s}\,\| u \|_{H^{\eta_2}}^2\,.
	\end{equation*}
	\item For~\eqref{eq:Sint-line5}, we use the results from Propositions~\ref{thm:EAdiff} and~\ref{thm:EAdiff-weak}, and obtain
	\begin{align*}
	\big\| \big( \EAh (\half \t) \Ph -\Ph\EA (\half \t)\big)\EB(\t,w ) \big\|_{L^2}& \leq C\,\t\,(\tfrac{1}{\t})^\beta\, h^{s} \sup_{0\leq \t_1\leq \t}\,\|\EA(\half \t_1) \EB(\t_1, w ) \|_{H^{s+2(1-\beta)}}\\
	& \leq C\,\t\,(\tfrac{1}{\t})^\beta\, h^{s} \sup_{0\leq \t_1\leq \t}\,\| \EB(\t_1, w) \|_{H^{s+2(1-\beta)}} \\
	& \leq  C\,\t\,(\tfrac{1}{\t})^\beta\, h^{s}\,\ee^{\t\, L_{s+2(1-\beta)}}\,\| u\|_{H^{s+2(1-\beta)}}\,,
	\end{align*}
	where $s=\min\{l,p+1\}$ and $ \beta=\max\{0,\sgn(p+3-l)\}$.
\item Combining these results, we obtain
\begin{align*}
\qquad
\begin{aligned}
\|\nS_h(\t,\Ph\,u)-\Ph\nS(\t,u)\|_{L^2}
&\leq C\,\t\, h^{s} \Big( (\tfrac{1}{\t})^\beta\|u\|_{H^{s+2(1-\beta)}} +\t\,(\tfrac{1}{\t})^\beta\,\ee^{\t\, C\|u\|_{H^1}^2}\,\|u\|_{H^{s+2(1-\beta)}}\,\| u\|_{H^1}^2\\
& \qquad\qquad {} +  \| u \|_{H^{\eta_1}}^2 \| u \|_{H^1}  + \| u \|_{H^{s}}\,\| u \|_{H^1}^2
 +   \ee^{\t\, L_s}\,\| u\|_{H^s}\,\| u \|_{H^{\eta_2}}^2 \\
& \qquad\qquad{} + (\tfrac{1}{\t})^\beta\,\ee^{\t\, L_{s+2(1-\beta)}}\,\| u\|_{H^{s+2(1-\beta)}} \Big) \\
& \leq C\,\t\, h^{s}\Big( \| u \|_{H^{s}}\,\|u\|_{H^{\eta_1}}\,\| u\|_{H^1}+ (\tfrac{1}{\t})^\beta\ee^{\t\,L_{s+2(1-\beta)}}\,\| u  \|_{H^{s+2(1-\beta)}}\\
& \qquad\qquad {} + \ee^{\t\,L_s}\,\| u \|_{H^s}\,\|u  \|_{H^{\eta_2}}^2 +\t\,(\tfrac{1}{\t})^\beta\,\ee^{\t\, C\|u\|_{H^1}^2}\,\|u\|_{H^{s+2(1-\beta)}}\,\| u\|_{H^1}^2\Big)\,,
\end{aligned}
\end{align*}
where $\eta_1=s - \chi_{[3,\infty]}(s)$ and $\eta_2 = s - \chi_{[2,\infty]}(s) - \chi_{[4,\infty]}(s)$.
Thus, we can find a constant $C^\ast$ for some $\t<t_n$ such that
\begin{equation}
\label{eq:S-Sh-estimate-Cast}
\|\nS_h(\t,\Ph\,u)-\Ph\nS(\t,u)\|_{L^2} \leq C^\ast\,\t\,(1+\tfrac{1}{\t})^\beta h^{s}\,.
\end{equation}
\end{itemize}
The $H^1$ approximation result follows directly from the $L^2$ approximation via the inverse estimate, Theorem~\ref{thm:Brenner-Scottinv},
\begin{equation*}
\|\nS_h(\t,\Ph\,u)-\Ph\nS(\t,u)\|_{H^1} \leq h^{-1}\,\|\nS_h(\t,\Ph\,u)-\Ph\nS(\t,u)\|_{L^2}\,,
\end{equation*}
which concludes the proof.
\end{proof}

\subsection{$ H^1 $-regularity of the fully discretized splitting operator} \label{sec:regularity-full}

\begin{lemma}
	\label{thm:Sh-regularity}
	Suppose that $u\in H^4$, $t_n \leq T$ is fixed and that $h$ is sufficiently small compared to $\| u\|_{H^4}$ and~$T$.
    Then we can bound the iterative application of the splitting operator $\nS_h$
    from~\eqref{eq:Strang-splitting-h} in $H^1$ in terms of $\wt{C}$ depending on $t_n$ and on $\| u \|_{H^4}$,
	\begin{equation*}
	\max_{1 \leq m \leq n}\|\nnSh{m}\Ph\,\nnS{n-m} u\|_{H^1} \leq \wt{C}\,.
	\end{equation*}
\end{lemma}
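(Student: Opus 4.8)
The plan is to measure the fully discrete iterates $\nnSh{m}\Ph\,\nnS{n-m}u$ against the Rayleigh--Ritz projections of the semidiscrete solution and to control the resulting $H^1$-defect by a \emph{one-step} error recursion; the conditional $H^1$-stability of $\nS_h$ is then turned into an effective bound by a bootstrap that exploits the smallness of~$h$. The first ingredient is that the semidiscrete solution is uniformly controlled: since $u=\ini\in H^4$, Proposition~\ref{thm:Hm-regularity} gives $\|\nnS{k}u\|_{H^4}\le\ee^{L_4 t_k}\|u\|_{H^4}$ for every $k$ with $t_k\le T$, so that $\|\nnS{k}u\|_{H^1}$ and $\|\nnS{k}u\|_{H^{s+2(1-\beta)}}$ are bounded in terms of $\|u\|_{H^4}$, $T$, $d$, $\Omega$ alone, where $s=\min\{4,p+1\}$, $\beta=\max\{0,\sgn(p-1)\}$ and $H^{s+2(1-\beta)}\subseteq H^4$. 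By the projection bound~\eqref{eq:Proj-prop} the same holds for $\|\Ph\,\nnS{k}u\|_{H^1}\le M_0$, with $M_0$ having the same dependence. It therefore suffices to show that $\|\nnSh{m}\Ph\,\nnS{n-m}u\|_{H^1}$ stays within a fixed multiple of $M_0$.

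Next, fixing $m\in\{1,\dots,n\}$ and writing $v:=\nnS{n-m}u$ and $e_i:=\|\nnSh{i}\Ph v-\Ph\,\nnS{i}v\|_{H^1}$ (so $e_0=0$), I would apply one splitting step and insert the mixed term $\Ph\,\nS(\t,\nnS{i}v)$ to obtain
\begin{equation*}
\nnSh{i+1}\Ph v-\Ph\,\nnS{i+1}v
= \big[\nS_h(\t,\nnSh{i}\Ph v)-\nS_h(\t,\Ph\,\nnS{i}v)\big]
  + \big[\nS_h(\t,\Ph\,\nnS{i}v)-\Ph\,\nS(\t,\nnS{i}v)\big].
\end{equation*}
The second bracket is exactly the quantity estimated in Theorem~\ref{thm:interpS-err}, applied to the semidiscrete iterate $\nnS{i}v=\nnS{n-m+i}u$ (bounded in $H^{s+2(1-\beta)}$ by the previous paragraph), hence $\le C\,\t\,(1+\tfrac{1}{\t})^\beta h^{s-1}$ with $C$ uniform in $i,m,n,\t,h$; the first bracket is handled by the one-step $H^1$-stability of Proposition~\ref{thm:Sh-stability}, whose constant involves only $\max\{\|\nnSh{i}\Ph v\|_{H^1},\|\Ph\,\nnS{i}v\|_{H^1}\}\le M_0+e_i$. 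This produces the recursion
\begin{equation*}
e_{i+1}\le\ee^{\,C\,\t\,(M_0+e_i)^2}\,e_i+C\,\t\,(1+\tfrac{1}{\t})^\beta h^{s-1},
\end{equation*}
whose right-hand side contains $e_i$ but not $e_{i+1}$.

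Finally I would run the bootstrap: by induction, if $e_j\le M_0$ for $j\le i$, the recursion linearizes to $e_{j+1}\le\ee^{\,4C\,\t\,M_0^2}e_j+C\,\t\,(1+\tfrac{1}{\t})^\beta h^{s-1}$, and a discrete Gronwall estimate (using $i\t\le T$) yields $e_{i+1}\le C_1\,(1+\tfrac{1}{\t})^\beta h^{s-1}$ with $C_1=C_1(\|u\|_{H^4},T,d,\Omega)$; for $h$ sufficiently small this is $\le M_0$, which closes the induction. Then $\|\nnSh{i}\Ph v\|_{H^1}\le M_0+e_i\le 2M_0$ for all $0\le i\le m$, uniformly in $m$, and taking the maximum over $1\le m\le n$ gives the claim with $\wt{C}=2M_0$.

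The hard part is the circularity: the $H^1$-stability of $\nS_h$ in Proposition~\ref{thm:Sh-stability} is only \emph{conditional}, its constant growing with the $H^1$-norm of the numerical solution, so that a naive estimate would presuppose what it is meant to establish. The way around it is precisely to organise the estimate as the one-step recursion above rather than as a full telescoping sum as in~\eqref{eq:global-space-overview} (which would feed $\nnSh{i+1}\Ph v$ back into its own Lipschitz constant), and to run the bootstrap, which relies on $h$ being small compared to $\|u\|_{H^4}$ and $T$. The only further point of care is that for $\beta=1$ (regularity insufficient relative to the polynomial degree) the accumulated defect carries the additional factor $1+\tfrac{1}{\t}$; this does not affect the qualitative boundedness asserted here and merely mirrors the reduced convergence order already visible in Theorem~\ref{thm:convergence-complete}.
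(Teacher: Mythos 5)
Your proof is correct and follows essentially the same route as the paper: both combine the semidiscrete $H^4$-regularity (Proposition~\ref{thm:Hm-regularity}), the one-step consistency bound of Theorem~\ref{thm:interpS-err} and the conditional $H^1$-stability of Proposition~\ref{thm:Sh-stability} in a bootstrap on the smallness of $h$, and your one-step recursion with discrete Gronwall is just the unrolled form of the paper's telescoping sum with induction on the step count. The only caveat --- shared with the paper's own proof, which silently uses the $\beta=0$ bound $C^\ast\t h$ --- is that for $\beta=1$ the smallness condition on $h$ implicitly couples $h$ to $\t$ through the accumulated factor $(1+\tfrac{1}{\t})\,h^{s-1}$.
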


\begin{proof}
	We use induction over $n$ for $t_n=n\t \leq T $. For $n=1$
    we apply Proposition~\ref{thm:Sh-stability}, giving
	\begin{equation*}
	\| \nnSh{}\Ph\,u \|_{H^1} \leq \ee^{C\,\t a_1^2}\,\| u\|_{H^1}\,,
	\end{equation*}
	where $a_1=\|u \|_{H^1}$.	
	For $n=2$ we use the consistency estimate~\eqref{eq:S-Sh-estimate-Cast}, giving
	\begin{align*}
	\| \nnSh{2} \Ph\,u\|_{H^1} & = \| \nnSh{} \nnSh{} \Ph\,u\|_{H^1} \\
	& \leq \|\Ph\,\nnS{2} u\|_{H^1}  + \|(\Ph\,\nnS{}  - \nnSh{} \Ph)\,\nnS{} u \|_{H^1}
      + \| \nnSh{} (\Ph\,\nnS{} u- \nnSh{} \Ph\,u) \|_{H^1} \\
	& \leq \| \nnS{2} u\|_{H^1}+ C^\ast \,\t\,h + \ee^{C\,\t\, a_2^2}\,\| \Ph\,\nnS{} u - \nnSh{} \Ph\,u \|_{H^1} \\
	& \leq\| \nnS{2} u\|_{H^1} + C^\ast\,\t\,h\,\big( 1+ \ee^{C\,\t\, a_2^2}\big),
	\end{align*}
	where $a_2= \max\{\| \nnS{}u\|_{H^1}, \| \nnSh{} \Ph\,u\|_{H^1}\}$
    and where $C^\ast$ depends in particular on $\| \nnS{m}u \|_{H^{4}}$, $m\in\{0,1\}$
    as it appears in~\eqref{eq:S-Sh-estimate-Cast}.
    Here we have used the $H^1$-bound of Theorem~\ref{thm:interpS-err} for $s=2$, which involves the regularity requirement $u \in H^4$.
	For sufficiently small $h$ we can control the contribution of $\t\,( 1+ \ee^{C\,\t\, a_2^2})$ such that
	\begin{equation*}
	\| \nnSh{2} \Ph\,u\|_{H^1} \leq \| \nnS{2} u\|_{H^1} + C^\ast \leq \wt{C}\,,
	\end{equation*}
	for a constant $\wt{C}$ depending on $t_n$ and $\|u\|_{H^{4}}$. This follows from the regularity of the splitting solution in $H^{4}$,
	\begin{equation*}
	\| \nnS{2}u \|_{H^{4}}\leq \ee^{L_{4}\, t_2}\,\|u\|_{H^{4}} \leq \ee^{L_{4} T}\,\|u \|_{H^{4}}\leq \frac{\wt{C}}{2}\,,
	\end{equation*}
	with $L_4$ from~\eqref{eq:Hm-reg-bound}.
	
	\paragraph{$ \bullet $ $ n \mapsto n+1\,$}
    We assume inductively that $a_n= \max\limits_{m \in \{ 0,\ldots,n-1 \}}\| \nnSh{m}\Ph\,\nnS{n-1-m} u\|_{H^1} $ satisfies
    $ a_n \leq \wt{C} $ and show
	\begin{equation*}
	a_{n+1} \leq \wt{C}\,.
	\end{equation*}
	In particular, the constant $\wt{C}$ depends on $ T $ and $\| u\|_{H^{4}}$ such that
	\begin{equation*}
	\| \nnS{n} u\|_{H^4} \leq \ee^{L_{4} \,t_n}\,\|u \|_{H^{4}} \leq \frac{\wt{C}}{2}\,.
	\end{equation*}
	Now we use this inequality to show that $ \| \nnSh{n} \Ph\,u\|_{H^1} $ is bounded. In fact,
	\begin{align*}	
	\| \nnSh{n} \Ph\,u\|_{H^1} & \leq \big\| \Ph\,\nnS{n} u -
    \sum_{k=1}^{n}\big(\nnSh{k-1}  \Ph\,\nnS{} \nnS{n-k} u - \nnSh{k-1} \nnSh{}\Ph\,\nnS{n-k} u \big) \big\|_{H^1} \\
	& \leq \|\Ph\,\nnS{n} u\|_{H^1}  + \sum_{k=1}^{n}\,\|\nnSh{k-1}\Ph\,\nnS{} \nnS{n-k} u - \nnSh{k-1} \nnSh{}\Ph\,\nnS{n-k} u \|_{H^1}\\
	& \leq \| \nnS{n} u\|_{H^1} + \sum_{k=1}^n \ee^{C\,\t\,(k-1)\, a_n^2}\,\|\Ph\,\nnS{} \nnS{n-k} u -  \nnSh{}\Ph\,\nnS{n-k} u \|_{H^1}\\
	& \leq \| \nnS{n} u\|_{H^1} + \sum_{k=1}^n C^\ast\,\t\,h\,\ee^{C\,\t\,(k-1)\, a_n^2} ~ \leq  \| \nnS{n} u\|_{H^1} + C^\ast\, n\,\t\, h\,\ee^{C\,\t\,n\, a_n^2} \\
	& \leq \| \nnS{n} u\|_{H^1} + C^\ast\, T\, h\,\ee^{C\, T\, a_n^2} ~ \leq \| \nnS{n} u\|_{H^1} + C^\ast \leq \wt{C}
	\end{align*}
	for sufficiently small $h$ to control $t_n\ee^{C\, T\, a_n^2}$.
	
	Obviously, we can apply the same estimate for terms of the form
	\begin{equation*}
	\| \nnSh{m} \Ph\,\nnS{n-m} u\|_{H^1}\,, \quad m=0 \ldots n\,.
	\end{equation*}
	Hence
	\begin{equation*}
	a_{n+1} = \max_{m\in\{0,\ldots, n\}} \|  \nnSh{m}\Ph\,\nnS{n-m} u\|_{H^1} \leq \wt{C}\,.
	\end{equation*}
\end{proof}
\noindent This proof was inspired by~\cite{gauckler09}, where an Hermite spectral discretization
was considered.

\section{Implementation and numerical results}
\label{sec:num}

\subsection{Implementation aspects}
 For the efficient implementation of the FEM model introduced in Sec.~\ref{sec:setting},
 we use a method based on~\cite{Coh02} and~\cite{kormann2012time}.
 To this end we choose (tensor) Gauss--Lobatto nodes of degree $p$ on rectangular elements for the definition of the nodal basis and for the numerical evaluation of the inner products in~\eqref{eq:Matrix-definition}. These nodes allow exact integration of polynomials up to degree $2p-1$, hence the evaluation of the matrix $K$, which involves the gradients $\nabla v_{(k,j)}$, is exact. The evaluation of the matrix $M$ involves integrals of the form
\begin{equation*}
\int_{\Omega_k} v_{(k,i)}(x)\,v_{(k,j)}(x)\,\dd x \approx
\sum_\ell w_\ell\,v_{(k,i)}(x_\ell)\,v_{(k,j)}(x_\ell) =
\sum_\ell w_\ell\,\delta_{i{}\ell}\,\delta_{j{}\ell} = w_i\,\delta_{i{}j}\,,
\end{equation*}
where $w_l$ are the associated quadrature weights. Hence the matrix $M$ is diagonal,
and $M^{-1}K$ preserves the sparsity of $K$ (see Algorithm~\ref{alg:SPE-alg}), and likewise for the matrix $\potm$.

Analogously, the evaluation of $F$ simplifies to
\begin{equation*}
F(c) = M\cdot \big( c\;{.*}\;\ol{c}\big)\,,
\end{equation*}
where ${.*}$ denotes component-wise multiplication.
For the computation of the numerical solution $\psi_n=\nS_h^n(\t,\ini)$ for the full FEM discretization, we refer to Algorithm \ref{alg:SPE-alg}.
\begin{algorithm}
\caption{A splitting solution of the Schr\"odinger-Poisson problem}
\label{alg:SPE-alg}
\begin{algorithmic}[1]
\Procedure {Schr\"odinger-Poisson}{$M,K,\ini$}
\State $\ini\ldots$ interpolated initial function
\Procedure {Calculate matrices}{$v_i$} \Comment {$v_i\ldots$Galerkin basis functions}
\For {$k=1\ldots n_G$} \Comment{$n_G\ldots$ number of integration nodes per element}
\State $K_{i,j}:=K_{i,j} + w_k\cdot\nabla v_i(x_k)\cdot\nabla v_j(x_k)\cdot \det(T_k)$
\Comment{$T_k\ldots$ Jacobian of translation to $x_k$}
\State $M_{i,j}:=M_{i,j} + w_k\cdot v_i(x_k)\cdot v_j(x_k)\cdot \det(T_k)$			
\Comment{$w_k\ldots$ integration weights}
\State Preliminary calculation of $\potm(\cdot)$, $F(\cdot)$
\State $\potm_{i,j}:=\potm_{i,j} + w_k\cdot v_m(x_k)\cdot v_i(x_k)\cdot v_j(x_k)\cdot \det(T_k)$ \Comment{$v_m\, v_i=v_i^2$ using $v_m(x_k)v_i(x_k)=\delta_{m,i}\delta_{m,k}$}
\State $F_{i}:=F_{i} + w_k\cdot v_i(x_k)^2\cdot v_i(x_k)\cdot \det(T_k)$
\EndFor
\EndProcedure
\State $t:=\t$			\Comment{$\t\ldots$initial time stepsize}
\State $\psi:=\ini$
\While {$t<T$}
\State	$\psi_{\text{tmp}}:=\psi_\t(\t,\psi,\alpha,\beta)$   	\Comment{$\alpha$, $\beta$\ldots splitting coefficients}
\State	$\texttt{err}:=$ error estimator$(\psi_{\text{tmp}},\t,\psi)$   \Comment{any suitable error estimator}
\If {$\texttt{err}<\texttt{tol}$}
\State	$\psi:=\psi_{\text{tmp}}$, $t:=t+\t$
\Else
\State Choose smaller $\t$ to reduce \texttt{err}
\EndIf
\EndWhile
\EndProcedure
\Statex
\Procedure {$\psi_{\t}$}{$\t$,$\psi_{\texttt{inp}}$,$\alpha$,$\beta$}
\State $\psi_\t:=\psi_{\texttt{inp}}$
\For {$k=1\ldots n_s$} 	\Comment{$n_s\ldots$ number of splitting stages}
\State		$\psi_{\t}=\EBh(\beta(k) \cdot\t,\EAh(\alpha(k)\cdot \t,\psi_\t))$
\EndFor
\EndProcedure
\Statex
\Procedure {$\EAh$}{$\t$, $\psi_{\texttt{inp}}$}
\State $A:=-\ii \,\half \,M^{-1}\cdot K$					\Comment{$A\ldots$temporary matrix}
\State $\EAh:=\ee^{\t A}\cdot \psi_{\texttt{inp}}$ 	\Comment{matrix exponential calculated via an effective solver (\texttt{expokit})}
\EndProcedure
\Statex
\Procedure {$\EBh$}{$\t$, $\psi_{\texttt{inp}}$}
\State $d:=-K^{-1}\cdot F\cdot |\psi_{\texttt{inp}}|^2$ 	\Comment{$d\ldots$ solution of Poisson problem} \label{line:c}
\State $\hat{B}:=-\ii\,\,M^{-1}\cdot \potm\cdot d$			\Comment{$\hat{B}\ldots$ diagonal matrix} \label{line:d}
\State $\EBh:=\ee^{\t \hat{B}}\cdot \psi_{\texttt{inp}}$			\Comment{exponential calculated via pointwise multiplication}
\EndProcedure
\end{algorithmic}
\end{algorithm}

The obtained systems of differential equations for $\EAh$ can be solved efficiently via fast exponential solvers (for instance the function \texttt{expv} from the package \texttt{expokit}, see~\cite{sidje98}, which is based on an adaptive Krylov integrator, see~\cite{saad92s}), and since $K$ is a symmetric positive definite band matrix, the Poisson problem can be solved efficiently by common solvers for sparse systems of linear equations.

In Algorithm~\ref{alg:SPE-alg} we have indicated a time-adaptive version based on an appropriate local error estimator.
For this purpose, one may e.g. adopt the approach from~\cite{auzingeretal13b}.
For adaptivity in space, an appropriate a posteriori error estimator is required, but this is not in the scope of this presentation.

\subsection{Numerical example}
\label{sec:num-example}
We illustrate the performance of time-splitting for a two-dimensional test example. The problem data are chosen as follows:
\begin{itemize}
\item $\Omega=[0,5]^2$
\item $\ini(x,y)=10\,\ee^{-10((x-2.5)^2+(y-2.5)^2)}$ (Gaussian initial state)
\item Integration from $t=0$ to $t=0.1$.
In Figure~\ref{Fig:Plot} we display the wave function at time $t=0.1$ using a $100\times100$ mesh
and polynomial basis functions of degree $2$,
obtained via a fourth order splitting method with time stepsize $\t=0.0005$.
\end{itemize}
\paragraph{Global time-splitting error} For the finite element discretization we choose $25\times25$ uniform
rectangular elements of degree $p=2$ with Gauss--Lobatto nodes.
We apply time-splitting methods of orders $q=1$ to $4$, namely
Lie--Trotter splitting ($q=1$), Strang splitting ($q=2$),
a scheme of order $q=3 $ with rational coefficients by Ruth
(\cite[3rd order scheme from the pair \text{\tt Emb 3/2 RA}]{splithp}),
and an optimized scheme of order $ q=4 $ by Blanes and Moan
(\cite[4th order scheme from the pair \text{\tt Emb 4/3 BM PRK/A}]{splithp});
see the collection~\cite{splithp} for tables of coefficients and further references.

In Figure~\ref{Fig:t-errors_6methods} we display the $L^2$-norm of the global error at $t_n=0.1$ for different choices of the time stepsize $\t$ together with the observed orders $\hat{q}$,
\begin{equation*}
\| \psi(t_n) - \psi_n \|_{L^2} \approx C\,\t^{\hat{q}}\,.
\end{equation*}
A reference solution was obtained using a high order splitting scheme with a significantly refined time stepsize.

\paragraph{FEM approximation error} In Figure~\ref{Fig:h-errors_4methods-full}
we document the behavior of the spatial discretization error using the Strang splitting method for a fixed time stepsize $\t=0.002$ in dependence of the FEM-mesh for different values of the polynomial degree $p$, varying the mesh parameter $h$ from $1$ to $2^{-6}$ and determining the respective observed order $\hat{p}$ of the spatial error via extrapolation for $h\rightarrow 0$,
\begin{align*}
\| \psi(t_n) - \psi_n \|_{L^2} \approx C\,h^{\hat{p}+1}\,.
\end{align*}

   \begin{figure}[h!]
   	\centering
   	\includegraphics[width=0.45\textwidth]{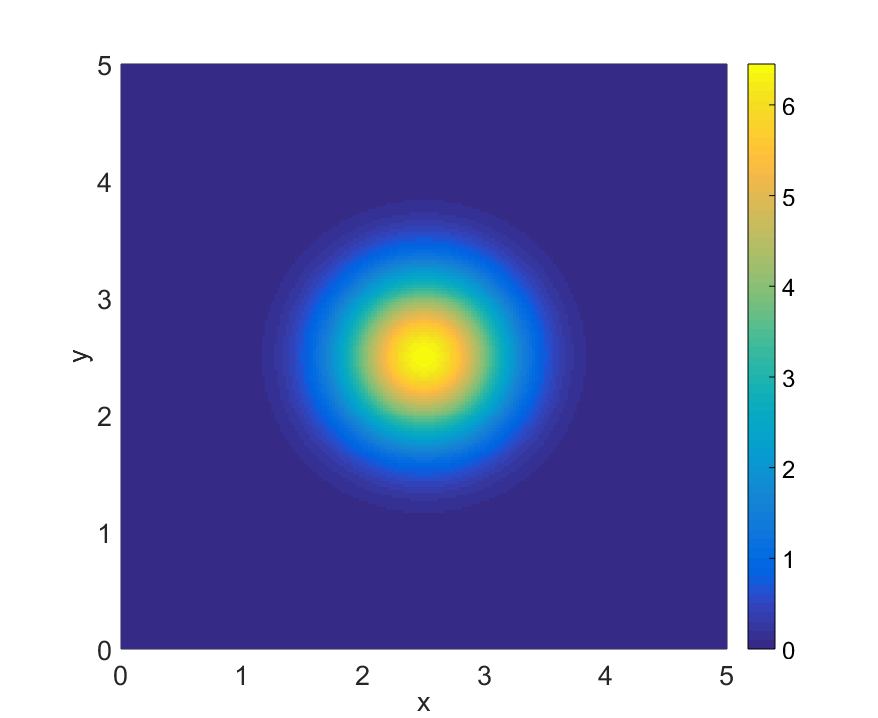}
   	\includegraphics[width=0.45\textwidth]{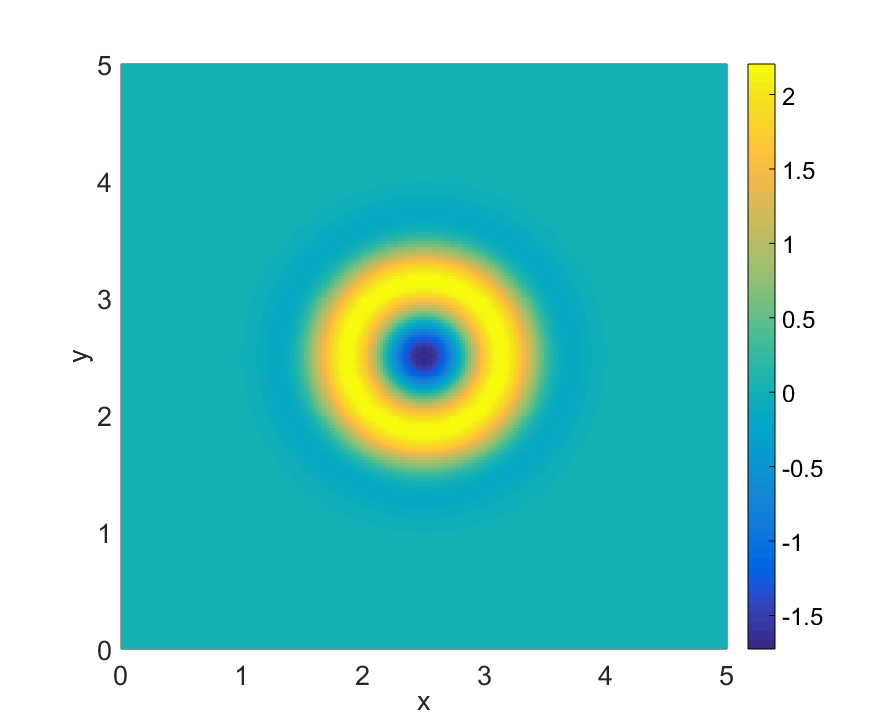}
   	\caption{Wave function at $t=0.1$. Left: Absolute value $|\psi|$. Right: Real part $\Re\,\psi$.}
   	\label{Fig:Plot}
   \end{figure}

\begin{figure}[h!]
    	\centering
        \includegraphics[width=0.45\textwidth]{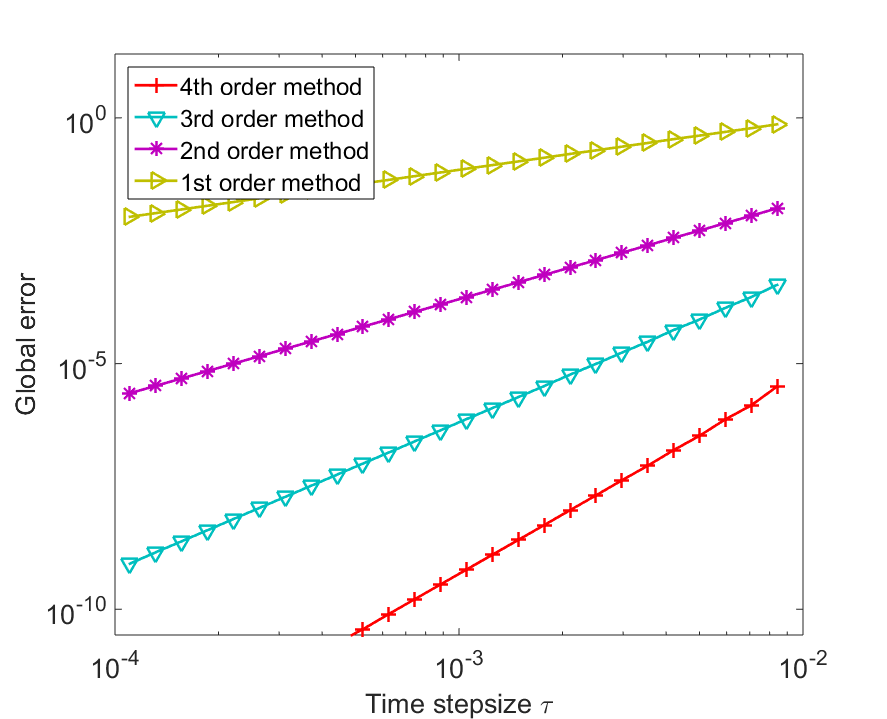}
        \includegraphics[width=0.45\textwidth]{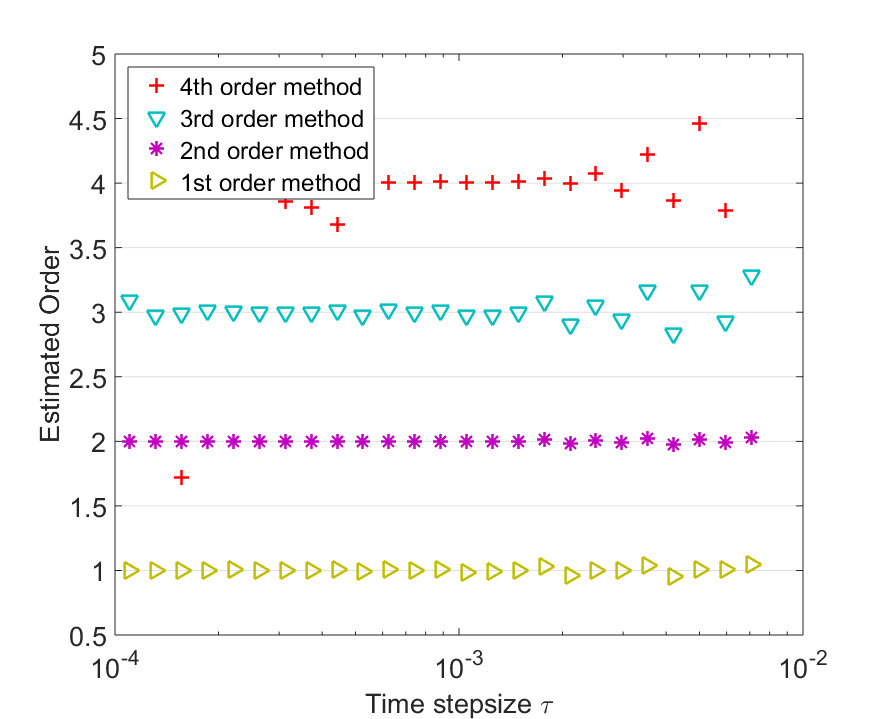}
    	\caption{\textbf{Global time-splitting error.}
        Left: Global time-splitting error in $ L^2 $ at $t=0.1$ in dependence of the time stepsize $\t$
        for splitting methods orders $q=1$ to $4$. Right: Observed order~$\hat{q}$.}
    \label{Fig:t-errors_6methods}
    \end{figure}

\begin{figure}[h!]
    	\centering
        \includegraphics[width=0.45\textwidth]{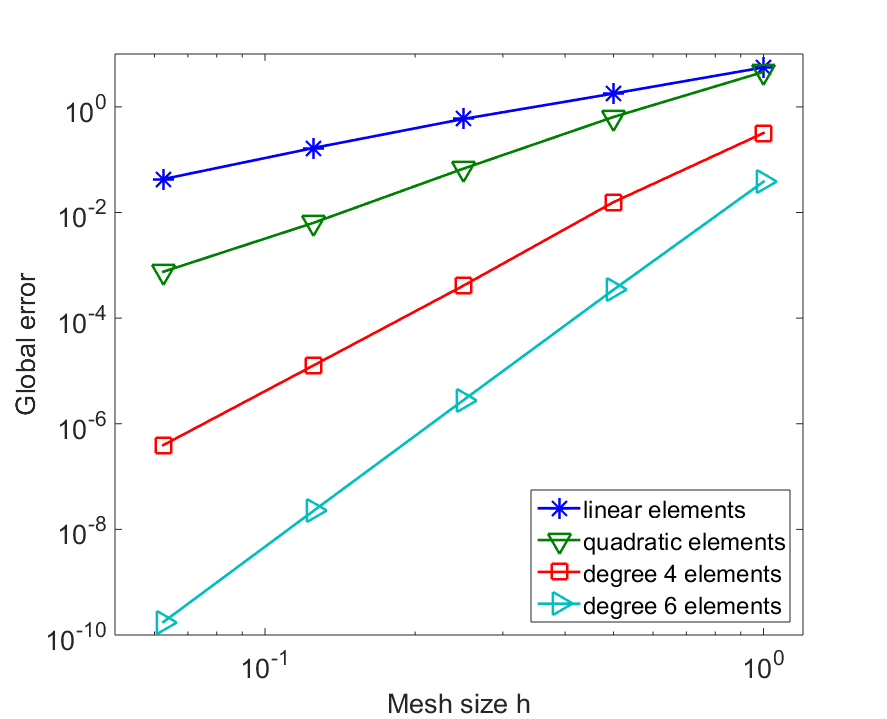}
        \includegraphics[width=0.45\textwidth]{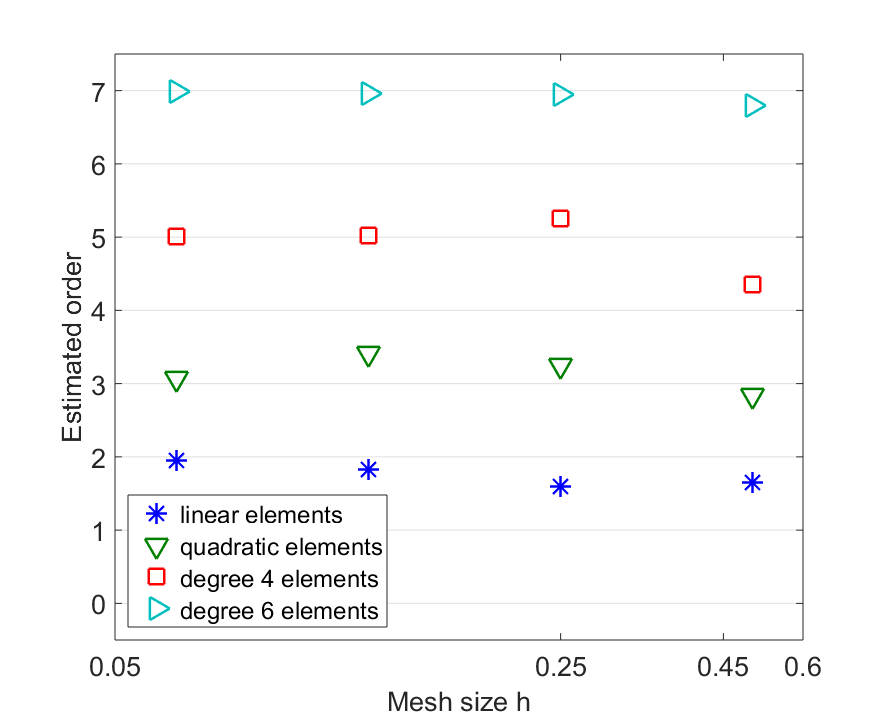}
    	\caption{\textbf{Spatial approximation order.} Left: FEM approximation error in $ L^2 $ at $t=0.1$
        in dependence of the meshsize $h$ for polynomial degree $p\in\{1,2,4,6\}$. Right: Observed order $\hat{p}+1$.}
    \label{Fig:h-errors_4methods-full}
    \end{figure}
\appendix
\section{Solution representation by variation-of-constant formulas}
\label{sec:VOC-appl}
Since in our context nonlinear operators $B$ and $B_h$ arise,
we will resort to the following variants of the variation-of-constants formula.
We recall that $B$ and $\hat{B}$ are defined in~\eqref{eq:AB} and define subflows~$\EB$ and $\eb$ in the subproblem~\eqref{eq:EB-problem} and~\eqref{eq:eb-problem-linear}.
In the spatially discrete case, the operators $B_h$ and $\hat{B}_h$ are defined in~\eqref{eq:AhBh}
and define the subproblems~\eqref{eq:EBh-weak-problem} and~\eqref{eq:ebh-weak-problem} for the evolutionary operators $\EBh$ and $\ebh$.
\begin{enumerate}[(i)]
\item \textbf{Same operators $B_h$, but with different argument.}
	
To rewrite the difference
\begin{align*}
\EBh(\t,u) - \ebh(\t,\wu)u &= \underbrace{\ebh(\t,u)u}_{v(\t)} - \underbrace{\ebh(\t,\wu)u}_{w(\t)}\,,
\end{align*}
we use the fact that for all $\phi_h\in \nVh$,
\begin{align*}
\big(v'(\t),\phi_h\big)_{L^2} &=\big( \Bt_h(u) \,v(\t), \phi_h\big)_{L^2}\,, \qquad \big(w'(\t),\phi_h\big)_{L^2} = \big(\Bt_h(\wu)\, w(\t),\phi_h\big)_{L^2}\,, \qquad v(0)=w(0)\,,
\end{align*}
which defines a new differential equation,
\begin{align*}
\begin{cases}
~\big((v-w)'(\t),\phi_h\big)_{L^2} = \big(\Bt_h(\wu) \big(v-w\big)(\t) + \big( \Bt_h(u) - \Bt_h(\wu)\big) v(\t),\phi_h\big)_{L^2}\,,\\
~\big((v-w)(0),\phi_h\big)_{L^2} = 0\,.
\end{cases}
\end{align*}
By the variation-of-constant formula we obtain the mild formulation
\begin{equation}
\label{eq:B-B-diff}
\big(v - w)(\t)\,u = \int_0^\t \ebh(\t-\s,\wu)\,\big( \Bt_h(u) - \Bt_h(\wu)\big)\,\ebh(\s,u)\,u\,\dd\s\,.
\end{equation}
\item \textbf{Different operators $B$, $B_h$}.

To rewrite the difference
\begin{equation*}
\ebt{ \wu}u  - \EBh(\t,u)
= \underbrace{\eb(\t,\wu)u}_{v(\t)} - \underbrace{\ebh(\t,u)u}_{w(\t)}\,,
\end{equation*}
we use the fact that $\nVh\subset \nV$, such that for all $\phi_h\in\nVh$,
\begin{equation*}
\big(v'(\t),\phi_h\big)_{L^2}
= \big(\Bt(\wt u) \,v(\t),\phi_h\big)_{L^2}\,, \quad \big( w'(\t),\phi_h\big)_{L^2}
= \big(\Bt_h(u)\, w(\t),\phi_h\big)_{L^2}\,, \quad v(0)=w(0)\,.
\end{equation*}
Again we obtain a differential equation,
\begin{equation*}
\begin{cases}
~\big((v-w)'(\t),\phi_h\big)_{L^2} = \big(\Bt_h(u) \big(v-w\big)(\t) + \big( \Bt(\wt u) - \Bt_h(u)\big) v(\t),\phi_h\big)_{L^2}\,,\\
~\big((v-w)(0),\phi_h\big)_{L^2} = 0\,
\end{cases}
\end{equation*}
such that the variation-of-constant formula yields
\begin{equation}
\label{eq:B-Bh-diff}
\big(v - w)(\t)\,u = \int_0^\t \ebh(\t-\s,u)\,\big( \Bt(\wt u) - \Bt_h(u)\big)\,\eb(\s,\wu)\,u \,\dd\s\,.
\end{equation}
\end{enumerate}

\section{Useful inequalities}\label{sec:sobolev}

In our theoretical estimates, we recurrently resort to estimates of Sobolev type.
For convenience of the reader, we briefly recapitulate these technical tools here.
We start by repeating some elementary notions from functional
analysis, see for example~\cite{brennerscott02,hackbusch92,miklavcic98}.
The underlying space is $L^2$ equipped with the inner product $(\cdot , \cdot )_{L^2}$,
\begin{equation*}
(v,w)_{L^2} = \int_{\Omega} v(x)\,\overline{w(x)} \; \dd x\,, \qquad v,w \in L^2\,,
\end{equation*}
and the norm $\|\,\cdot\,\|_{L^2}$,
where $\Omega$ is a bounded domain with smooth boundary (for
the Sobolev embeddings cited below, it is necessary that $\Omega$ satisfies
the cone condition).

The set of all functions in $L^2$ having weak derivatives up to
order $\leq k$ is denoted as the \emph{Sobolev space} $H^k$.
It is equipped with the norm
\[ \|u\|_{H^k}:=\Big(\sum_\alpha \| \partial^\alpha u\|_{L^2}^2\Big)^{1/2},\]
where the sum is over all derivatives up to order $k$.

Furthermore, we will denote by $\|\,\cdot\|_{L^\infty}$ the supremum norm
on the space of functions bounded almost everywhere.

In our analysis, we will make use of
the following results, see for instance~\cite{brennerscott02}.
Our formulations are specific to $\RR^d$, $d\in\{2,3\}$:
\begin{theorem}\label{thm:sobolev1}
Let $k, m \in \NN$ such that $k-m>3/2$. Then for $u\in H^k$ there is a $C^m$ function in
the $L^2$ equivalence class of $u$ and
\begin{equation*}
\|u\|_{C^m}:=\sum \|\partial^\alpha u\|_{L^\infty}\leq \const\|u\|_{H^k},
\end{equation*}
where the sum is over all derivatives of order up to $m$.
\end{theorem}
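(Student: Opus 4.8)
The plan is to reduce the statement to the classical Fourier-analytic Sobolev embedding on all of $\RR^d$ and then transfer it back to the bounded domain $\Omega$ by means of a bounded extension operator, finally upgrading ``weakly differentiable with continuous derivatives'' to genuine $C^m$-regularity by mollification.

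First I would treat the case $m=0$ on $\RR^d$. For $u\in H^k(\RR^d)$ with $k>d/2$ (which holds here since $d\in\{2,3\}$ and $k>3/2$), the Fourier transform satisfies $(1+|\xi|^2)^{k/2}\hat u\in L^2(\RR^d)$. Writing
\begin{equation*}
|\hat u(\xi)| = (1+|\xi|^2)^{-k/2}\,(1+|\xi|^2)^{k/2}\,|\hat u(\xi)|
\end{equation*}
and applying the Cauchy--Schwarz inequality gives
\begin{equation*}
\|\hat u\|_{L^1(\RR^d)}
\leq \Big(\int_{\RR^d}(1+|\xi|^2)^{-k}\,\dd\xi\Big)^{1/2}\,\|u\|_{H^k},
\end{equation*}
where the remaining integral converges precisely because $2k>d$. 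Hence $\hat u\in L^1(\RR^d)$, so the inverse-Fourier-transform representation of $u$ exhibits a bounded continuous representative of its $L^2$-equivalence class (boundedness from the displayed $L^1$-bound, continuity from dominated convergence), with $\|u\|_{L^\infty}\leq\const\,\|u\|_{H^k}$.

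Next I would pass from $\RR^d$ to $\Omega$: since $\Omega$ is a bounded domain with smooth boundary (in particular it satisfies the cone condition), there is a bounded linear extension operator $E\colon H^k(\Omega)\to H^k(\RR^d)$ with $(Eu)|_\Omega=u$ and $\|Eu\|_{H^k(\RR^d)}\leq\const\,\|u\|_{H^k(\Omega)}$; combining this with the $\RR^d$-estimate settles the case $m=0$ on $\Omega$. For general $m$ I would apply the $m=0$ result to each weak derivative $\partial^\alpha u$ with $|\alpha|\leq m$: since $u\in H^k(\Omega)$ one has $\partial^\alpha u\in H^{k-|\alpha|}(\Omega)\subseteq H^{k-m}(\Omega)$, and the hypothesis $k-m>3/2\geq d/2$ makes the embedding $H^{k-m}\hookrightarrow C^0$ applicable, so $\|\partial^\alpha u\|_{L^\infty}\leq\const\,\|\partial^\alpha u\|_{H^{k-m}}\leq\const\,\|u\|_{H^k}$; summing over $|\alpha|\leq m$ yields the asserted norm inequality.

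Finally, to conclude that $u\in C^m(\Omega)$ in the classical sense, I would invoke the standard mollification argument: if $u$ together with all its weak derivatives up to order $m$ admit continuous representatives, then $u$ is $m$-times continuously differentiable and its classical derivatives agree with the weak ones. The main obstacle is not any single deep step but the clean assembly of these ingredients — in particular verifying the existence and boundedness of the extension operator $E$, which is exactly where the smoothness (cone-condition) hypothesis on $\Omega$ enters; all remaining steps are routine Fourier analysis and mollification. Since this is a classical result, in the text it suffices to refer to~\cite{brennerscott02}.
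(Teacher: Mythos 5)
Your argument is correct. Note, however, that the paper does not prove this statement at all: it is listed among the "useful inequalities" in the appendix and simply referred to the literature ("see for instance~\cite{brennerscott02}"), exactly as you suggest doing in your closing sentence. Your sketch is nevertheless a valid self-contained proof, and all the quantitative checks work out: the Fourier integral $\int_{\RR^d}(1+|\xi|^2)^{-k}\,\dd\xi$ converges because $2k>3\geq d$ for $d\in\{2,3\}$, the reduction of the case of general $m$ to $m=0$ uses precisely the hypothesis $k-m>3/2\geq d/2$, and the extension operator plus mollification steps are standard (the extension also guarantees continuity of the representative up to $\partial\Omega$, which is what the $L^\infty$ norms in the statement implicitly require). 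The only point worth flagging is that your route differs from the one in the cited reference: Brenner--Scott prove Sobolev embeddings via averaged Taylor polynomials and Riesz-potential estimates, which covers general $W^m_p$ spaces and domains satisfying only a cone condition, whereas the Fourier-transform argument is the cleanest option in the Hilbert-space case $p=2$ — which is all this paper ever uses.
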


This implies the following inequalities, see for instance
\cite{adams75},~\cite{brennerscott02},~\cite{harlit34}, and~\cite{miklavcic98}:
\begin{corollary}\label{cor:sobolev2}
For $u,v \in H^2$, the following inequalities hold:
\begin{align*}
 \|uv\|_{L^2}&\leq \|u\|_{L^2}\|v\|_{L^\infty}\leq \const\, \|u\|_{L^2}\|v\|_{H^2}\,,\\
 \|uv\|_{H^1}&\leq \const\,\|u\|_{H^1}\|v\|_{H^2}\,,\\
 \|uv\|_{H^2}&\leq \const\,\|u\|_{H^2}\|v\|_{H^2}\,,\\
 \|uv\|_{L^2}&\leq \const\,\|u\|_{L^4}\|v\|_{L^4} \leq
\const\|u\|_{H^1}\|v\|_{H^1}\,,\\
 \|uvw\|_{L^2}&\leq \const\|u\|_{L^6}\|v\|_{L^6}\|w\|_{L^6} \leq
\const\|u\|_{H^1}\|v\|_{H^1}\|w\|_{H^1}\,.
\end{align*}
\end{corollary}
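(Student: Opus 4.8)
The plan is to obtain all five lines from H\"older's inequality together with the Sobolev embeddings that are available in space dimension $d\in\{2,3\}$ on a bounded domain satisfying the cone condition. The facts I would invoke are $H^2\hookrightarrow L^\infty$ (this is Theorem~\ref{thm:sobolev1} with $k=2$, $m=0$, since $2-0>3/2$), the embeddings $H^1\hookrightarrow L^4$ and $H^1\hookrightarrow L^6$ (for $d=3$ both follow from the critical embedding $H^1\hookrightarrow L^6$ because $L^6\subset L^4$ on a bounded domain, and for $d=2$ every finite exponent is admissible), and $H^2\hookrightarrow W^{1,4}$, i.e.\ $\|\nabla v\|_{L^4}\le\const\|v\|_{H^2}$, since $\nabla v\in H^1\hookrightarrow L^4$.

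First I would dispatch the three ``pure product'' lines. Line~1 is immediate from H\"older ($\tfrac12=\tfrac12+0$), giving $\|uv\|_{L^2}\le\|u\|_{L^2}\|v\|_{L^\infty}$, followed by $\|v\|_{L^\infty}\le\const\|v\|_{H^2}$. Line~4 follows from H\"older with $\tfrac12=\tfrac14+\tfrac14$ and then $H^1\hookrightarrow L^4$, and line~5 from H\"older with $\tfrac12=\tfrac16+\tfrac16+\tfrac16$ and then $H^1\hookrightarrow L^6$.

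For the two lines involving derivatives I would use the Leibniz rule. For line~2, from $\nabla(uv)=(\nabla u)\,v+u\,(\nabla v)$ I would estimate the two terms asymmetrically: $\|(\nabla u)\,v\|_{L^2}\le\|\nabla u\|_{L^2}\,\|v\|_{L^\infty}\le\const\|u\|_{H^1}\|v\|_{H^2}$, and $\|u\,(\nabla v)\|_{L^2}\le\|u\|_{L^4}\,\|\nabla v\|_{L^4}\le\const\|u\|_{H^1}\|v\|_{H^2}$, using $H^1\hookrightarrow L^4$ and $H^2\hookrightarrow W^{1,4}$; combined with the $L^2$-bound already obtained in line~1 this gives the stated $H^1$-estimate. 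For line~3, the Banach-algebra property of $H^2$ for $d\le 3$, I would expand every derivative of $uv$ of order at most $2$ by Leibniz and distribute the factors so that each is controlled in $L^\infty$, $L^4$, or $L^2$: the term carrying two derivatives on a single factor, e.g.\ $(D^2u)\,v$, is bounded by $\|D^2u\|_{L^2}\,\|v\|_{L^\infty}$, and the mixed second-order term $(Du)(Dv)$ by $\|Du\|_{L^4}\,\|Dv\|_{L^4}$, using $H^2\hookrightarrow L^\infty$ and $H^2\hookrightarrow W^{1,4}$ respectively; summing over the finitely many terms yields $\|uv\|_{H^2}\le\const\|u\|_{H^2}\|v\|_{H^2}$.

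There is no deep obstacle. The only points that need attention are that each embedding invoked is precisely the one that fails in higher dimension, so the restriction $d\in\{2,3\}$ (with the cone condition on $\Omega$) is genuinely used, and that in line~2 the Leibniz terms must be split so that the $L^\infty$-control --- which costs two derivatives --- always lands on $v$ and never on $u$; this is what makes the $H^1$-estimate asymmetric in its two arguments.
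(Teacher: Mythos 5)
Your proposal is correct and follows exactly the route the paper intends: the paper gives no written proof of Corollary~\ref{cor:sobolev2}, merely noting that it ``follows'' from the embedding Theorem~\ref{thm:sobolev1} and citing the standard references, and your argument (H\"older with the exponent splittings $\tfrac12=\tfrac12+0$, $\tfrac14+\tfrac14$, $\tfrac16+\tfrac16+\tfrac16$, combined with $H^2\hookrightarrow L^\infty$, $H^1\hookrightarrow L^4$, $H^1\hookrightarrow L^6$, $H^2\hookrightarrow W^{1,4}$ for $d\in\{2,3\}$, plus the Leibniz rule for the $H^1$- and $H^2$-lines) is precisely the standard derivation those references supply. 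Your remark that the $L^\infty$-control must always land on the $H^2$-argument in the asymmetric second line is the right point of care.
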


\section{Auxiliary results} \label{sec:auxiliary}
{\revblue This section contains a collection of useful theorems and bounds which are used in the convergence theory in~Section~\ref{sec:conv-ana}.

\subsection{Conservation and stability properties of the subflows}
\begin{proposition}
\label{thm:semidiscrete-conservation} $~$
\begin{enumerate}[(i)]
\item
The evolution operator $ \EA(t) $ is unitary with respect to $ \norm{L^2}{\,\cdot\,} $ and $ \norm{H^1}{\,\cdot\,} $, for $ t>0 $
and $ u \in H_0^1$,
\begin{subequations}
\label{eq:semidiscrete-conservation-A}
\begin{align}
\norm{L^2}{\EA(t)\,u} &= \norm{L^2}{u}\,, \label{eq:semidiscrete-conservation-A-L2} \\
\norm{H^1}{\EA(t)\,u} &= \norm{H^1}{u}\,. \label{eq:semidiscrete-conservation-A-H1}
\end{align}
\end{subequations}
\item
The evolution operator $ \EB(t,\,\cdot\,) $ is unitary with respect to $ \norm{L^2}{\,\cdot\,} $
for $ t>0 $ and $ u \in H_0^1$,
\begin{equation}
\label{eq:semidiscrete-conservation-B}
\norm{L^2}{\EB(t)\,u} = \norm{L^2}{u}\,.
\end{equation}
\end{enumerate}
\end{proposition}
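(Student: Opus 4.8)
The plan is to treat the two subflows separately, in each case reducing to a linear evolution with a skew-symmetric generator and reading off the isometry property from an energy identity.

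For part~(i), the $L^2$-statement~\eqref{eq:semidiscrete-conservation-A-L2} follows from the weak formulation~\eqref{eq:SubproblemWeak-A}: for a sufficiently regular solution $\psi$ of~\eqref{eq:EA-problem} I would insert the admissible test function $\phi=\psi(\cdot,t)\in H^1_0$ to get $(\partial_t\psi,\psi)_{L^2}=-\tfrac12\ii\,(\nabla\psi,\nabla\psi)_{L^2}=-\tfrac12\ii\,\|\nabla\psi\|_{L^2}^2$, which is purely imaginary; taking real parts and using $2\,\Re(\partial_t\psi,\psi)_{L^2}=\partial_t\|\psi\|_{L^2}^2$ yields $\partial_t\|\psi(\cdot,t)\|_{L^2}^2=0$, hence $\|\EA(t)u\|_{L^2}=\|u\|_{L^2}$. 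For initial data only in $H^1_0$ one extends by density, the isometry established on $H^2\cap H^1_0$ admitting a unique bounded extension; equivalently, one may invoke Stone's theorem, since $A=\tfrac12\ii\Delta$ with domain $H^2\cap H^1_0$ is skew-adjoint and therefore generates a unitary $C_0$-group on $L^2$.

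For the $H^1$-statement~\eqref{eq:semidiscrete-conservation-A-H1} I would exploit that $\EA(t)$ commutes with the Dirichlet Laplacian. Writing $\|u\|_{H^1}^2=\|u\|_{L^2}^2+|u|_{H^1}^2=\|u\|_{L^2}^2-(\Delta u,u)_{L^2}$ for $u\in H^2\cap H^1_0$, the second term transforms as $-(\Delta\,\EA(t)u,\EA(t)u)_{L^2}=-(\EA(t)\,\Delta u,\EA(t)u)_{L^2}=-(\Delta u,u)_{L^2}$, where the middle step uses $\EA(t)\,\Delta=\Delta\,\EA(t)$ on $H^2\cap H^1_0$ and the last step the $L^2$-unitarity just proven; combined with~\eqref{eq:semidiscrete-conservation-A-L2} this gives $\|\EA(t)u\|_{H^1}=\|u\|_{H^1}$, and density extends it to $H^1_0$. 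Alternatively, expanding $u$ in the $L^2$-orthonormal eigenbasis $(e_k)$ of the Dirichlet Laplacian with eigenvalues $-\lambda_k$ gives $\EA(t)u=\sum_k\ee^{-\frac12\ii\lambda_k t}(u,e_k)_{L^2}\,e_k$, and since $|\ee^{-\frac12\ii\lambda_k t}|=1$ both $\sum_k|(u,e_k)_{L^2}|^2$ and $\sum_k\lambda_k|(u,e_k)_{L^2}|^2$ are invariant.

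For part~(ii) I would use the reduction recorded after~\eqref{eq:EB-def}: the solution $\psi=\EB(t,u)\,u$ of~\eqref{eq:EB-problem} coincides with the solution of the \emph{linear} equation~\eqref{eq:EB-problem-linear}, $\partial_t\psi=\Bt(u)\psi=-\ii\,\pot\,\psi$, where $\pot=\DP(|u|^2)\in H^1_0$ is real-valued (being the solution of a Poisson problem with real data $|u|^2\in L^2\hookrightarrow H^{-1}$ and real homogeneous boundary data) and independent of $t$. Then, exactly as in the displayed computation preceding~\eqref{eq:EB-problem-linear}, $\partial_t|\psi(\cdot,t)|^2=2\,\Re\big(\overline{\psi}\,\partial_t\psi\big)=2\,\Re\big(-\ii\,\pot\,|\psi|^2\big)=0$ pointwise; integrating over $\Omega$ gives $\|\psi(\cdot,t)\|_{L^2}^2=\|u\|_{L^2}^2$, which is~\eqref{eq:semidiscrete-conservation-B}. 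Equivalently one may test~\eqref{eq:EB-weak-problem} with $\phi=\psi$ and take the real part, using that $(\pot\,\psi,\psi)_{L^2}=\int_\Omega\pot\,|\psi|^2\,\dd x$ is real. The only genuine point requiring care is the rigorous justification of these energy identities for data of limited regularity (differentiating $\|\psi(t)\|_{L^2}^2$, using $\psi(t)$ itself as a test function, and the commutation $\EA(t)\,\Delta=\Delta\,\EA(t)$); I would handle this in the standard way, by first arguing on the dense regular subspace $H^2\cap H^1_0$ where the solutions are classical and the manipulations are licit, and then passing to the limit by density together with the boundedness of the evolution operators. The rest is routine.
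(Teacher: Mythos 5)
Your proposal is correct. The $L^2$-unitarity of $\EA(t)$ and the whole of part~(ii) follow the same route as the paper: test the weak formulation with $\phi=\psi$ (respectively use that $\Bt(u)=-\ii\,\DP(|u|^2)$ is $\ii$ times a real potential), take real parts, and conclude $\partial_t\|\psi\|_{L^2}^2=0$. The one place where you genuinely diverge is the $H^1$-conservation in part~(i): the paper tests the weak formulation with $\phi=\partial_t\psi$, which gives $\scp{L^2}{\nabla\psi}{\nabla\partial_t\psi}=2\,\ii\,\|\partial_t\psi\|_{L^2}^2$ and hence $\partial_t|\psi|_{H^1}^2=2\,\Re\big(2\,\ii\,\|\partial_t\psi\|_{L^2}^2\big)=0$ directly, staying entirely within the variational framework that is reused verbatim for the discrete operator $\EAh$ in Proposition~\ref{thm:discrete-conservation}. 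You instead use either the commutation $\EA(t)\,\Delta=\Delta\,\EA(t)$ on $H^2\cap H^1_0$ together with the already-established $L^2$-isometry, or the spectral expansion in the Dirichlet eigenbasis. Both of your arguments are valid and arguably cleaner at the semi-discrete level (the spectral one in particular makes the conservation of every $H^k$-seminorm transparent, cf.\ Remark~\ref{rem:Hk-cons}), but they rely on structure specific to the exact Laplacian and do not transfer as directly to the Galerkin setting, where the paper's test-function argument carries over with $\Delta$ replaced by $\Delta_h$. Your attention to the density/regularization issue is appropriate and in fact slightly more careful than the paper, which performs the formal computation and relegates the density argument to a remark.
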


\begin{proof}
\begin{enumerate}[(i)]
\item
We proceed from {the weak formulation}~\eqref{eq:EB-weak-problem},
{
\begin{equation}
\label{eq:EB-weak-problem2}
\begin{cases}
~\scp{L^2}{\partial_t \psi}{\phi} = -\,\ii\,\scp{L^2}{\pot\,\psi}{\phi}
  \quad \text{for all} ~~ \phi \in H^1_0\,,  \\
~\psi\big|_{t=0} = u\,,
\end{cases}
\end{equation}
}
set $ \phi=\psi=\EA(t,u) $,
\begin{equation*}
\scp{L^2}{\partial_t \psi}{\psi} = -\,\tfrac{1}{2}\,\ii\,\scp{L^2}{\nabla\psi}{\nabla\psi}\,,
\end{equation*}
and obtain
\begin{equation*}
\partial_t\| \psi \|_{L^2}^2 = \partial_t(\psi,\psi)_{L^2}
= 2\,\Re(\partial_t \psi,\psi)_{L^2} = \Re(-\ii\,\scp{L^2}{\nabla\psi}{\nabla\psi}) = 0\,,
\end{equation*}
which implies~\eqref{eq:semidiscrete-conservation-A-L2}.
Furthermore, setting $ \phi=\partial_t \psi = \partial_t\,\EA(t,u) $ in~\eqref{eq:EB-weak-problem2},
\begin{equation*}
\norm{L^2}{\partial_t \psi} = \scp{L^2}{\partial_t\psi}{\partial_t \psi} = -\,\tfrac{1}{2}\,\ii\,\scp{L^2}{\nabla\psi}{\nabla\partial_t\psi}\,,
\end{equation*}
we obtain
\begin{equation*}
\partial_t |\psi|_{H^1}^2
= \partial_t(\nabla \psi,\nabla \psi)_{L^2}
= 2\,\Re\scp{L^2}{\nabla\psi}{\nabla\partial_t\psi}
= 2\,\Re(2\,\ii\,\norm{L^2}{\partial_t \psi}^2) = 0\,,
\end{equation*}
which together with~\eqref{eq:semidiscrete-conservation-A-L2} implies~\eqref{eq:semidiscrete-conservation-A-H1}.
\item For the flow defined by~\eqref{eq:eb-problem-linear},
{$\partial_t \psi = \Bt(w)\psi, \ \psi\big|_{t=0} = u,$}
 and $\psi=\eb(t,w)\,u$ we have
\begin{equation*}
\partial_t \| \psi \|_{L^2}^2 = 2\, \Re\,( \psi, \Bt(w)\, \psi)_{L^2} =  2\, \Re\,( |\psi|^2, \Bt(w))_{L^2} = 0,
\end{equation*}
which, in particular, implies~\eqref{eq:semidiscrete-conservation-B}.
\end{enumerate}
\end{proof}

\begin{remark} \label{rem:Hk-cons}
{\rm{
More generally, the $H^k$-norms for $k\geq1$ are conserved under the flow $ \EA(t) $. To see this, we consider
the strong formulation~\eqref{eq:EA-problem}, {$\partial_t\psi=A\psi,\ \psi(0)=u,$} with $u\in H^{k+2}\cap H_0^1$ and wish
to show that $\|\partial_x^k \psi \|_{L^2} =\|\partial_x^k u \|_{L^2}$ for
any partial derivative~$\partial_x$. We compute
\begin{equation*}
\partial_t (\partial_x^k \psi, \partial_x^k \psi)_{L^2}
= 2\,\Re(\partial_x^k \partial_t \psi,\partial_x^k \psi)_{L^2}
= 2\,\Re\big(\tfrac{1}{2}\,\ii\,(\partial_x^{k+2}\psi,\partial_x^k\psi)_{L^2}\big)
= \Re\big(-\ii\,\|\partial_x^{k+1} \psi\|_{L^2}^2 \big)= 0\,.
\end{equation*}
Via a density argument, the result also holds for all $\psi \in H^k \cap H_0^1 $.
}}
\end{remark}

\subsection{Conservation and stability properties of the discrete subflows}
For our convergence analysis we will make use of the following facts.
\begin{proposition}
\label{thm:discrete-conservation} $~$
\begin{enumerate}[(i)]
\item
The evolution operator $ \EAh(t) $ is unitary with respect to $ \norm{L^2}{\,\cdot\,} $ and $ \norm{H^1}{\,\cdot\,} $, for $ t>0 $
and $ u_h \in \nVh$,
\begin{subequations}
\label{eq:discrete-conservation-A}
\begin{align}
\norm{L^2}{\EAh(t)\,u_h} &= \norm{L^2}{u_h}\,, \label{eq:discrete-conservation-A-L2} \\
\norm{H^1}{\EAh(t)\,u_h} &= \norm{H^1}{u_h}\,. \label{eq:discrete-conservation-A-H1}
\end{align}
\end{subequations}
\item
The evolution operator $ \ebh(t,\,\cdot\,) $ is unitary
with respect to $ \norm{L^2}{\,\cdot\,} $ i.e., for $ t>0 $ and $u_h,\, w_h \in \nVh $,
\begin{subequations}
\label{eq:discrete-conservation-B}
\begin{equation}
\label{eq:discrete-conservation-B-L2}
\norm{L^2}{\ebh(t,w_h) u_h} = \norm{L^2}{u_h}\,.
\end{equation}
For $w_h=u_h$, this implies $\norm{L^2}{\EBh(t,u_h)} = \norm{L^2}{u_h}$.
Furthermore, $ \ebh(t,\,\cdot\,)$ satisfies the differential inequality
\begin{equation}
\label{eq:H1-EBh-property}
\partial_t |\ebh(t,w_h) u_h|_{H^1} 
                                \leq \norm{L^2}{\ebh(t,w_h) u_h \nabla\pot_h}\,,
\end{equation}
where $\pot_h=\DPh(|w_h|^2)$.
\end{subequations}
\end{enumerate}
\end{proposition}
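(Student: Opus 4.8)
The plan is to prove all four assertions by the same energy‑estimate technique that underlies Proposition~\ref{thm:semidiscrete-conservation} for the continuous subflows, testing the weak formulations~\eqref{eq:EAh-weak-problem} and~\eqref{eq:ebh-weak-problem} against suitable finite element functions. Two structural facts do the work: the skew‑symmetry built into the discrete equations, and the fact that $\pot_h=\DPh(|w_h|^2)$ is real‑valued, so that multiplication by $\pot_h$ is selfadjoint on $L^2$. Since $\nVh$ is finite‑dimensional and $t$‑independent, $\partial_t\psi_h$ stays in $\nVh$ whenever $\psi_h(t)\in\nVh$ and may therefore itself be used as a test function; this replaces the continuous device of differentiating twice in space.

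For part~(i): to obtain~\eqref{eq:discrete-conservation-A-L2} I would set $\phi_h=\psi_h=\EAh(t)u_h$ in~\eqref{eq:EAh-weak-problem}, so that $\partial_t\norm{L^2}{\psi_h}^2=2\,\Re\,\scp{L^2}{\partial_t\psi_h}{\psi_h}=\Re\big(-\ii\,\scp{L^2}{\nabla\psi_h}{\nabla\psi_h}\big)=0$. For~\eqref{eq:discrete-conservation-A-H1} I would instead test with $\phi_h=\partial_t\psi_h$, which yields $\norm{L^2}{\partial_t\psi_h}^2=-\tfrac12\,\ii\,\scp{L^2}{\nabla\psi_h}{\nabla\partial_t\psi_h}$; hence $\scp{L^2}{\nabla\partial_t\psi_h}{\nabla\psi_h}=-2\,\ii\,\norm{L^2}{\partial_t\psi_h}^2$ is purely imaginary and $\partial_t|\psi_h|_{H^1}^2=2\,\Re\,\scp{L^2}{\nabla\partial_t\psi_h}{\nabla\psi_h}=0$. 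Together with the $L^2$‑identity this gives conservation of $\norm{H^1}{\,\cdot\,}$. The first assertion in part~(ii) is analogous: testing~\eqref{eq:ebh-weak-problem} with $\phi_h=\psi_h$ and using $\Bth(w_h)=-\ii\pot_h$ with $\pot_h$ real gives $\partial_t\norm{L^2}{\psi_h}^2=2\,\Re\big(-\ii\int_\Omega\pot_h|\psi_h|^2\,\dd x\big)=0$, which is~\eqref{eq:discrete-conservation-B-L2}; the case $w_h=u_h$ together with~\eqref{eq:def-ebh} then gives $\norm{L^2}{\EBh(t,u_h)}=\norm{L^2}{u_h}$.

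The one substantive step is the differential inequality~\eqref{eq:H1-EBh-property}, which I would prove in the spirit of the continuous estimate. Differentiating the $H^1$‑seminorm and inserting $\partial_t\psi_h=\Bth(w_h)\psi_h=-\ii\,\pot_h\psi_h$ — more precisely, expressing $\partial_t|\psi_h|_{H^1}^2$ through the weak form~\eqref{eq:ebh-weak-problem} tested with $\Delta_h\psi_h\in\nVh$ and the defining relation~\eqref{eq:Deltah-Def} — and then using the Leibniz rule $\nabla(\pot_h\psi_h)=\psi_h\nabla\pot_h+\pot_h\nabla\psi_h$, the contribution of $\pot_h\nabla\psi_h$ becomes $-\ii\int_\Omega\pot_h|\nabla\psi_h|^2\,\dd x$, which is purely imaginary (again because $\pot_h$ is real) and therefore drops out of the real part. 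What remains is
\begin{equation*}
\partial_t|\psi_h|_{H^1}^2 = 2\,\Re\big(-\ii\,\scp{L^2}{\psi_h\nabla\pot_h}{\nabla\psi_h}\big) = 2\,\Im\,\scp{L^2}{\psi_h\nabla\pot_h}{\nabla\psi_h} \le 2\,\norm{L^2}{\psi_h\nabla\pot_h}\,|\psi_h|_{H^1}\,,
\end{equation*}
by Cauchy--Schwarz and $|\psi_h|_{H^1}=\norm{L^2}{\nabla\psi_h}$; dividing by $2|\psi_h|_{H^1}$ (the case $|\psi_h|_{H^1}=0$ being trivial) gives~\eqref{eq:H1-EBh-property}.

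The main obstacle is hidden in that last step: $\Bth(w_h)\psi_h=-\ii\,\pot_h\psi_h$ is a product of finite element functions and does not belong to $\nVh$, so~\eqref{eq:ebh-weak-problem} only determines $\partial_t\psi_h$ as the $L^2$‑orthogonal projection of $-\ii\,\pot_h\psi_h$ onto $\nVh$, and one has to verify that the component of $\pot_h\psi_h$ orthogonal to $\nVh$ does not spoil the cancellation of the $\pot_h\nabla\psi_h$‑term. I expect to handle this by performing the computation entirely through $\Delta_h\psi_h\in\nVh$, so that the orthogonal remainder is killed when paired against finite element quantities, using the selfadjointness of $\Delta_h$ on $\nVh$, in the same way the Galerkin orthogonality of the Rayleigh--Ritz projection is exploited elsewhere in the paper; making this reduction airtight, rather than the energy estimate itself, is where the care is needed.
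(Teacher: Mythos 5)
Your handling of part (i) and of the $L^2$-unitarity in part (ii) is correct and essentially identical to the paper's: the paper obtains \eqref{eq:discrete-conservation-A-L2} by pairing with $\psi_h$, obtains \eqref{eq:discrete-conservation-A-H1} by showing $\scp{L^2}{\nabla\partial_t\psi_h}{\nabla\psi_h}=-\scp{L^2}{\partial_t\psi_h}{\Delta_h\psi_h}$ is purely imaginary (your test with $\phi_h=\partial_t\psi_h$ is the same computation), and obtains \eqref{eq:discrete-conservation-B-L2} from the realness of $\pot_h$. Those three claims are fine.

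The problem is \eqref{eq:H1-EBh-property}, exactly where you locate it, and your proposed repair does not close the gap. The paper's own proof is the direct formal computation: it substitutes $\partial_t\psi_h=-\ii\,\pot_h\psi_h$ into $\nabla\partial_t\psi_h$, applies the Leibniz rule to $\nabla(\pot_h\psi_h)$, discards the real term $\scp{L^2}{\pot_h\nabla\psi_h}{\nabla\psi_h}$, and estimates the rest by Cauchy--Schwarz; it never engages with the fact that $\pot_h\psi_h\notin\nVh$. Your idea of routing everything through $\Delta_h\psi_h$ does correctly remove the $L^2$-projection: since $\Delta_h\psi_h\in\nVh$ is an admissible test function in \eqref{eq:ebh-weak-problem}, one gets $\partial_t|\psi_h|_{H^1}^2=-2\,\Re\scp{L^2}{\partial_t\psi_h}{\Delta_h\psi_h}=-2\,\Im\scp{L^2}{\pot_h\psi_h}{\Delta_h\psi_h}$ with the exact product $\pot_h\psi_h$ and no projection. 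But there you are stuck: the defining relation $\scp{L^2}{\Delta_h\psi_h}{v}=-\scp{L^2}{\nabla\psi_h}{\nabla v}$ holds only for $v\in\nVh$, and $v=\pot_h\psi_h$ is a piecewise polynomial of degree $2p$ lying outside $\nVh$, so you cannot integrate by parts back to $\scp{L^2}{\nabla(\pot_h\psi_h)}{\nabla\psi_h}$ and thereby split off the real term $\int_\Omega\pot_h|\nabla\psi_h|^2$. Self-adjointness of $\Delta_h$ on $\nVh$ does not help, because the obstruction is not where $\Delta_h$ sits but that the function it must be paired against is not in $\nVh$. Hence the decisive cancellation is not recovered by your device, and your proof of the differential inequality is incomplete — as you yourself concede. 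To reproduce the statement you would either carry out the paper's formal Leibniz-rule computation (accepting the strong form $\partial_t\psi_h=-\ii\,\pot_h\psi_h$) or explicitly estimate the defect introduced by the $L^2$-orthogonal projection of $\pot_h\psi_h$ onto $\nVh$; neither is done in the proposal.
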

\begin{proof}
\begin{enumerate}[(i)]
\item
For $ u_h \in \nVh $ and $ \psi_h = \EAh(t)\,u_h\in \nVh $ we have
$ \partial_t \psi_h = \half\,\ii\,\Delta_h \psi_h $. Hence by
{the definition of the discrete Laplacian}~\eqref{eq:Deltah-Def},
\begin{equation*}
\partial_t\| \psi_h \|_{L^2}^2 = \partial_t(\psi_h,\psi_h)_{L^2}
= 2\,\Re(\partial_t \psi_h,\psi_h)_{L^2} = \Re(\ii\,\Delta_h \psi_h,\psi_h)_{L^2}
= -\Re(\ii\,\nabla \psi_h,\nabla \psi_h)_{L^2} = 0\,,
\end{equation*}
which implies~\eqref{eq:discrete-conservation-A-L2}. Furthermore,
\begin{align*}
\partial_t |\psi_h|_{H^1}^2
&= \partial_t(\nabla \psi_h,\nabla \psi_h)_{L^2}
= 2\,\Re(\nabla\,\partial_t \psi_h,\nabla \psi_h)_{L^2}
=- 2\,\Re(\partial_t \psi_h,\Delta_h \psi_h)_{L^2}\\
& =-2\, \Re(\partial_t \psi_h,-\,2\,\ii\, \partial_t \psi_h)_{L^2}
= -4\,\Im(\partial_t \psi_h,\partial_t \psi_h)_{L^2}
 = 0\,,
\end{align*}
which implies~\eqref{eq:discrete-conservation-A-H1}.
\item For $u_h,\,w_h \in \nVh $ and $ \psi_h = \ebh(t,w_h)\,u_h $ we have
$ \partial_t \psi_h = -\ii\,\pot_h \psi_h $ with $ \pot_h = \DPh(|w_h|^2) $, hence
\begin{equation*}
\partial_t\,\norm{L^2}{\psi_h}^2
= \partial_t \scp{L^2}{\psi_h}{\psi_h}
= 2\,\Re\scp{L^2}{\partial_t \psi_h}{\psi_h}
= 2\,\Im\scp{L^2}{\pot_h\,\psi_h}{\psi_h} = 0,
\end{equation*}
since $ \pot_h $ is real. This implies~\eqref{eq:discrete-conservation-B-L2}.

On the other hand, $\ebh$ does not conserve the $H^1$-norm.
To derive a bound we compute
\begin{equation*}
\begin{split}
\partial_t |\psi_h|_{H^1}^2
&= \partial_t \scp{L^2}{\nabla\psi_h}{\nabla\psi_h}
 = 2\,\Re\scp{L^2}{\nabla\,\partial_t \psi_h}{\nabla\psi_h}
 = 2\,\Re(-\,\ii\,\scp{L^2}{\nabla(\pot_h \psi_h)}{\nabla\psi_h}) \\
&= 2\,\Im\scp{L^2}{\pot_h \nabla\psi_h}{\nabla\psi_h}
    + 2\,\Im \scp{L^2}{\psi_h \nabla\pot_h}{\nabla\psi_h}
=  0 + 2\,\Im\scp{L^2}{\psi_h\,\nabla\pot_h}{\nabla\psi_h}\,,
\end{split}
\end{equation*}
and estimate
\begin{equation*}
2 \,|\psi_h|_{H^1}\,\partial_t |\psi_h|_{H^1} =
\partial_t |\psi_h|_{H^1}^2 \leq 2\,\big| \scp{L^2}{\psi_h \nabla\pot_h}{\nabla\psi_h}\big|
\leq 2\,\norm{L^2}{\psi_h \nabla\pot_h}\,\big| \psi_h\big|_{H^1}\,.
\end{equation*}
This implies~\eqref{eq:H1-EBh-property},
\begin{equation*}
\partial_t |\psi_h|_{H^1} \leq \norm{L^2}{\psi_h \nabla\pot_h}
\quad \text{for} ~~ \psi_h = \ebh(t,w_h)\,u_h\,,
\end{equation*}
concluding the proof.
\end{enumerate}
\end{proof}

\subsection{Interpolation bounds and inverse estimates} \label{sec:interp_inverse}
In our convergence analysis we will refer to the following standard interpolation and inverse estimates.

\begin{theorem}
\label{thm:Brenner-Scott1}
Suppose $1<p < \infty$ and $m-d/p>0$.
Then, for $ 0 \leq s \leq m $ and $ u \in W_p^m $,
\begin{equation}
\label{eq:Brenner-Scott1}
\|u-\Ih\,u\|_{W_p^s}\leq C\,h^{m-s}\,|u|_{W_p^m}\,,
\end{equation}
where $C$ depends on $m$ and $d$.

Furthermore,
\begin{equation}
\label{eq:Brenner-Scott-infty}
\|u-\Ih\,u\|_{L^\infty} \leq C h^{m-d/p}\,|u|_{W^m_p}\,,
\end{equation}
where $C$ depends on $m$ and $d$.
\end{theorem}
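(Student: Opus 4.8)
The estimate~\eqref{eq:Brenner-Scott1} is classical; I would only sketch the standard argument and refer to~\cite{brennerscott02} for the full details. The hypothesis $m-d/p>0$ ensures, via the Sobolev embedding $W_p^m\hookrightarrow C^0$, that the pointwise nodal values in~\eqref{eq:Ih-def} are well-defined, so that $\Ih$ is a bounded linear operator on $W_p^m$; moreover the (Lagrange) nodal element reproduces all polynomials of degree $\leq m-1$.

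The plan is to reduce everything to the reference element $\Omega_0$ and then scale back. For each $k$ let $\hat u(\hat x)=u(B_k\hat x+b_k)$ denote the pullback of $u|_{\Omega_k}$ under the affine map $\Omega_0\to\Omega_k$ with constant Jacobian $B_k$. Affine equivalence gives $\widehat{\Ih u}=\hat I\hat u$, where $\hat I$ is the fixed nodal interpolation operator associated with $(\Omega_0,P,\mathcal N)$. Since $\mathrm{Id}-\hat I$ is bounded from $W_p^m(\Omega_0)$ to $W_p^s(\Omega_0)$ for $0\leq s\leq m$ and annihilates $\mathcal P_{m-1}$, the Bramble--Hilbert lemma yields
\[
\|\hat u-\hat I\hat u\|_{W_p^s(\Omega_0)}\leq C\inf_{q\in\mathcal P_{m-1}}\|\hat u-q\|_{W_p^m(\Omega_0)}\leq C\,|\hat u|_{W_p^m(\Omega_0)},
\]
with $C$ depending only on $\Omega_0$, $m$, $p$, $d$.

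Next I would transfer this to $\Omega_k$ by the standard affine scaling estimates. Quasi-uniformity (shape-regularity together with $h_k\simeq h$) gives $\|B_k\|\lesssim h$, $\|B_k^{-1}\|\lesssim h^{-1}$, $|\det B_k|\simeq h^d$, hence $|v|_{W_p^t(\Omega_k)}\simeq h^{d/p-t}\,|\hat v|_{W_p^t(\Omega_0)}$ for every $t\geq 0$, uniformly in $k$. Applying this on each seminorm $|u-\Ih u|_{W_p^j(\Omega_k)}$ for $0\leq j\leq s$, bounding it by $\|\hat u-\hat I\hat u\|_{W_p^s(\Omega_0)}$, invoking the reference estimate and then scaling back $|\hat u|_{W_p^m(\Omega_0)}\simeq h^{m-d/p}|u|_{W_p^m(\Omega_k)}$ gives the elementwise bound $\|u-\Ih u\|_{W_p^s(\Omega_k)}\leq C\,h^{m-s}\,|u|_{W_p^m(\Omega_k)}$ (absorbing the harmless powers $h^{m-j}\leq h^{m-s}$ for $h\leq 1$). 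Summing the $p$-th powers over $k=1,\dots,K$ and using $\big(\sum_k|u|_{W_p^m(\Omega_k)}^p\big)^{1/p}=|u|_{W_p^m}$ yields~\eqref{eq:Brenner-Scott1}; the $p=\infty$ case follows by taking maxima instead of $\ell^p$-sums.

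For~\eqref{eq:Brenner-Scott-infty} the same reduction applies, but on $\Omega_0$ one combines the embedding $W_p^m(\Omega_0)\hookrightarrow L^\infty(\Omega_0)$ (again valid since $m-d/p>0$) with Bramble--Hilbert to obtain $\|\hat u-\hat I\hat u\|_{L^\infty(\Omega_0)}\leq C\,|\hat u|_{W_p^m(\Omega_0)}$; the $L^\infty$-norm being scaling-invariant while $|\hat u|_{W_p^m(\Omega_0)}\simeq h^{m-d/p}|u|_{W_p^m(\Omega_k)}$, taking the maximum over $k$ gives~\eqref{eq:Brenner-Scott-infty}. The only substantive point in this programme is the Bramble--Hilbert step, i.e.\ the construction of a near-best polynomial approximant (for instance an averaged Taylor polynomial) whose $W_p^m$-error is controlled purely by $|\hat u|_{W_p^m(\Omega_0)}$; this is precisely the content of~\cite[Ch.~4]{brennerscott02}, which we invoke.
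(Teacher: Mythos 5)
Your proposal is correct and coincides with the paper's treatment: the paper gives no proof of its own but simply invokes \cite[Theorem 4.4.20]{brennerscott02}, and your sketch (Sobolev embedding for well-defined nodal values, Bramble--Hilbert on the reference element, affine scaling, summation over elements) is exactly the standard argument behind that citation. No gaps; the only implicit standing assumption, which both you and the paper rely on, is that the element's polynomial degree is at least $m-1$ so that $\Ih$ reproduces $\mathcal P_{m-1}$.
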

\noindent This follows from our assumptions and~\cite[Theorem 4.4.20]{brennerscott02}.
\begin{theorem}
	\label{thm:Brenner-Scott-Ph-conv}
	Suppose that the boundary of $\Omega$ is such
    that~\eqref{eq:Omega-property} holds. Then,
	\begin{equation*}
	\| u -\Ph u \|_{L^2} \leq C\, h^{m} |u|_{H^m}\,.
	\end{equation*}
\end{theorem}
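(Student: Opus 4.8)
The plan is to run the classical Aubin--Nitsche duality argument, using the assumed $H^2$-regularity \eqref{eq:Omega-property} of the Poisson problem on $\Omega$. Write $e = u - \Ph u \in H_0^1$. The first ingredient is the $H^1$-seminorm error bound: the Galerkin orthogonality \eqref{eq:Ph-def} together with $|\Ph u|_{H^1}\le|u|_{H^1}$ identifies $\Ph u$ as the $H^1$-seminorm best approximation to $u$ from $\nVh$, so comparing with the nodal interpolant and invoking Theorem~\ref{thm:Brenner-Scott1} (with Lebesgue exponent $2$ and $s=1$) gives
\[
|e|_{H^1} = |u-\Ph u|_{H^1} \le |u-\Ih u|_{H^1} \le C\,h^{m-1}\,|u|_{H^m}.
\]

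Next I would introduce the dual problem: let $w \in H_0^1$ solve $\Delta w = e$ with homogeneous Dirichlet conditions, i.e.\ $w = \DP e$. By the regularity hypothesis \eqref{eq:Omega-property} (applied with $e \in L^2$ in place of $u-\Ph u$) and the Poincar\'e inequality, $\norm{H^2}{w} \le C\,\norm{L^2}{e}$. Since $d \in \{2,3\}$, the embedding $H^2 \hookrightarrow C(\overline\Omega)$ makes $\Ih w$ well defined, and Theorem~\ref{thm:Brenner-Scott1} (with $m=2$, $s=1$) yields $|w - \Ih w|_{H^1} \le C\,h\,|w|_{H^2} \le C\,h\,\norm{L^2}{e}$. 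Now test against $e$: since $e$ vanishes on $\partial\Omega$, integration by parts gives $\norm{L^2}{e}^2 = \scp{L^2}{e}{\Delta w} = -\scp{L^2}{\nabla e}{\nabla w}$, and subtracting the Galerkin orthogonality relation $\scp{L^2}{\nabla e}{\nabla \Ih w}=0$ together with the Cauchy--Schwarz inequality gives
\[
\norm{L^2}{e}^2 = -\scp{L^2}{\nabla e}{\nabla (w-\Ih w)} \le |e|_{H^1}\,|w-\Ih w|_{H^1} \le C\,h^{m-1}|u|_{H^m}\cdot C\,h\,\norm{L^2}{e}.
\]
Dividing by $\norm{L^2}{e}$ produces $\norm{L^2}{e}\le C\,h^{m}\,|u|_{H^m}$, as claimed.

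I do not expect a substantive obstacle here, precisely because the crucial $H^2$ elliptic regularity estimate is exactly what is assumed in \eqref{eq:Omega-property}. The only points requiring care are (i) correctly reading off the quasi-optimal $H^1$ rate for $\Ph$ so that the standard interpolation estimate can be quoted (for $m=1$ one simply uses $v_h=0$ in the best-approximation inequality), and (ii) justifying the nodal interpolant $\Ih w$ of the dual solution, which rests on $H^2\hookrightarrow C^0$ in dimension $d\le 3$; alternatively one could replace $\Ih w$ by $\Ph w$, which needs no continuity but would make the argument mildly self-referential, so the interpolant route is cleaner.
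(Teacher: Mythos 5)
Your argument is correct and is precisely the Aubin--Nitsche duality argument that the paper itself invokes: the text gives no self-contained proof but refers to \cite[Theorem 5.4.8]{brennerscott02}, noting only that ``the proof relies on a duality argument,'' with the hypothesis~\eqref{eq:Omega-property} supplying exactly the $H^2$ regularity of the dual solution $w=\DP(u-\Ph u)$ that your estimate $\norm{H^2}{w}\le C\,\norm{L^2}{e}$ requires. Your two points of care (quasi-optimality of $\Ph$ in the $H^1$ seminorm, and $H^2\hookrightarrow C^0$ for $d\le 3$ to define $\Ih w$) are the right ones and are handled correctly.
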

The proof relies on a duality argument and can be found in~\cite[Theorem 5.4.8]{brennerscott02}.

\begin{theorem}[Inverse estimate]
\label{thm:Brenner-Scottinv} Suppose that $0<h<1$. Then there exists $C$ such that
\begin{equation*}
\| u_h \|_{H^1} \leq C\,h^{-1}\,\| u_h \|_{L^2}
\end{equation*}
for all $u_h\in \nV^h$.
\end{theorem}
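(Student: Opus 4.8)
The plan is the classical scaling argument: reduce the estimate to the fixed reference element $\Omega_0$, exploit that all norms on the finite–dimensional polynomial space $P$ are equivalent there, and then transform back and sum the local contributions over the quasi–uniform tessellation $\mathcal{T}^h$. Since $u_h$ is piecewise polynomial, the broken norms are additive,
\begin{equation*}
\norm{H^1}{u_h}^2 = \sum_{k=1}^{K}\norm{H^1(\Omega_k)}{u_h}^2\,, \qquad
\norm{L^2}{u_h}^2 = \sum_{k=1}^{K}\norm{L^2(\Omega_k)}{u_h}^2\,,
\end{equation*}
so it suffices to prove the local inverse bound $|u_h|_{H^1(\Omega_k)} \le C\,h_k^{-1}\,\norm{L^2(\Omega_k)}{u_h}$ for each element, with $h_k = \mathrm{diam}\,\Omega_k$ and $C$ independent of $k$ and $h$.

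First I would fix, for each $k$, the affine map $F_k\colon\Omega_0\to\Omega_k$ underlying the affine equivalence, and write $u_h|_{\Omega_k} = \hat u_k\circ F_k^{-1}$ with $\hat u_k\in P$. From affine equivalence together with the shape–regularity implied by quasi–uniformity one has $|\det DF_k|\simeq h_k^{d}$, $\|DF_k\|\simeq h_k$, and $\|DF_k^{-1}\|\simeq h_k^{-1}$, so the change of variables and the chain rule give
\begin{equation*}
\norm{L^2(\Omega_k)}{u_h}\simeq h_k^{d/2}\,\norm{L^2(\Omega_0)}{\hat u_k}\,,\qquad
|u_h|_{H^1(\Omega_k)}\simeq h_k^{d/2-1}\,|\hat u_k|_{H^1(\Omega_0)}\,,
\end{equation*}
with constants depending only on $d$ and the shape–regularity constant.

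The key step is that $P$ is finite–dimensional, hence the seminorm $|\,\cdot\,|_{H^1(\Omega_0)}$ and the norm $\norm{L^2(\Omega_0)}{\,\cdot\,}$ are equivalent on $P$; this provides a constant $C_0=C_0(p,d,\Omega_0)$ with $|\hat v|_{H^1(\Omega_0)}\le C_0\,\norm{L^2(\Omega_0)}{\hat v}$ for all $\hat v\in P$. Combining this with the scaling relations yields the local estimate, and since $0<h_k\le h<1$ (which controls the $L^2$–part of the full $H^1$–norm) while quasi–uniformity furnishes $h_k\ge c\,h$, squaring and summing over $k$ gives $\norm{H^1}{u_h}^2\le C\,h^{-2}\,\norm{L^2}{u_h}^2$, which is the assertion. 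I do not expect any genuine analytic obstacle; the only point that needs care is bookkeeping the dependence of the intermediate constants on the polynomial degree $p$ and on the shape–regularity constant of $\mathcal{T}^h$, both of which are fixed, so that the final $C$ is uniform in $h$ as required. This is the standard argument; see, e.g.,~\cite[Theorem~4.5.11]{brennerscott02}.
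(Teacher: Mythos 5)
Your argument is correct and is precisely the standard scaling/norm-equivalence proof of the inverse estimate; the paper itself gives no argument but simply cites \cite[Theorem~4.5.11]{brennerscott02}, whose proof is exactly the reduction to the reference element, equivalence of norms on the finite-dimensional polynomial space, and quasi-uniform rescaling that you carry out. Your handling of the lower-order $L^2$ part of the $H^1$-norm via $0<h<1$ is also the right (and necessary) bookkeeping step, so nothing is missing.
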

This follows from the remark of~\cite[Theorem 4.5.11]{brennerscott02}.
\subsection{Bounds involving $\DPh$}
At first we note the $H^1$-regularity property (see~\eqref{eq:Ph-Deltahi}),
\begin{equation}
\label{eq:Hmin2-est}
\| u_h \|_{H^1} = \| \DPh f \|_{H^1} = \|\Ph\,\DP f\|_{H^1}  \leq C\,\|\DP f\|_{H^1} \leq  C\,\| f \|_{H^{-1}}\,.
\end{equation}
The following estimate will be useful:
\begin{proposition}
\label{thm:Hmin1-est}
{\revred For $ f\in L^2 $ and $g\in H^1$,}
\begin{equation}
\label{eq:Hmin1-est}
\| f\,g \|_{H^{-1}} \leq C\,\| f \|_{L^2}\,\| g \|_{H^1}\,,
\end{equation}
where $C$ depends on $d$ and on $\Omega$.
\end{proposition}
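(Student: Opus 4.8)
The plan is to argue by duality, reducing the claimed $H^{-1}$ bound to an $L^2$ product estimate that is already available from the Sobolev inequalities recorded in Corollary~\ref{cor:sobolev2}.

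First I would write out the definition of the negative-order norm,
\[
\|f\,g\|_{H^{-1}} = \sup_{\phi \in H^1_0,\ \phi \neq 0} \frac{|(f\,g,\phi)_{L^2}|}{\|\phi\|_{H^1}}\,,
\]
so that it suffices to bound $|(f\,g,\phi)_{L^2}|$ for an arbitrary test function $\phi \in H^1_0$ by $C\,\|f\|_{L^2}\,\|g\|_{H^1}\,\|\phi\|_{H^1}$ and then pass to the supremum.

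Next, by the Cauchy--Schwarz inequality in $L^2$,
\[
|(f\,g,\phi)_{L^2}| \leq \int_\Omega |f|\,|g\,\phi|\,\dd x \leq \|f\|_{L^2}\,\|g\,\phi\|_{L^2}\,,
\]
which isolates the product $g\,\phi$ of two $H^1$ functions. The key step is then the bound $\|g\,\phi\|_{L^2} \leq C\,\|g\|_{H^1}\,\|\phi\|_{H^1}$: this is precisely the line $\|u\,v\|_{L^2} \leq C\,\|u\|_{L^4}\,\|v\|_{L^4} \leq C\,\|u\|_{H^1}\,\|v\|_{H^1}$ of Corollary~\ref{cor:sobolev2}, which rests on the continuous embedding $H^1(\Omega)\hookrightarrow L^4(\Omega)$. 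Combining the three displays and taking the supremum over $\phi$ with $\|\phi\|_{H^1}\leq 1$ yields the assertion, with $C$ depending on $d$ and $\Omega$ only through the embedding constant.

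The only genuine subtlety is dimensional: the argument uses $H^1\hookrightarrow L^4$, which is exactly where the standing assumption $d\in\{2,3\}$ (together with the cone condition on $\Omega$) enters; in higher space dimensions one would have to compensate with extra regularity on $f$ or $g$. Apart from this, no obstacle arises — everything reduces to Cauchy--Schwarz and the Sobolev product estimates already collected in Appendix~\ref{sec:sobolev}.
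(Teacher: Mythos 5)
Your proposal is correct and follows essentially the same route as the paper's own proof: duality in $H^{-1}$, Cauchy--Schwarz to peel off $\|f\|_{L^2}$, and then H\"older together with the embedding $H^1\hookrightarrow L^4$ (the product estimate of Corollary~\ref{cor:sobolev2}) to bound $\|g\,\phi\|_{L^2}$. Your remark on where $d\in\{2,3\}$ enters is accurate but not needed beyond what the paper already assumes.
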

\begin{proof}
We apply Cauchy-Schwarz and H{\"o}lder inequalities
and the Sobolev embedding of $H^1$ in $L^4$,
\begin{equation*}
\| f\,g \|_{H^{-1}}
= \sup_{\| v \|_{H^1} = 1} |(f\,g,v)_{L^2}|
\leq \sup_{\| v \|_{H^1} = 1} \|f\|_{L^2}\,\|g\,v\|_{L^2}
\leq \sup_{\| v \|_{H^1} = 1} \|f\|_{L^2}\,\|g\|_{L^4}\,\|v\|_{L^4}
\leq C\,\| f \|_{L^2}\,\| g \|_{H^1}\,,
\end{equation*}
completing the proof.
\end{proof}

\begin{proposition}
\label{thm:DPh-L2-est}
For $a\in L^2$ and $b,c \in H^1_0$,
\begin{equation*}
\|\DPh(a\, b)\, c\|_{L^2} \leq C\,\| a \|_{L^2} \,\|b\|_{H^1}\,\|c\|_{H^1}\,,
\end{equation*}
where $C$ depends on $d$ and on $\Omega$.
\end{proposition}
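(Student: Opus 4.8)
The plan is to derive the estimate by composing three tools that are already available: the product bound in $H^{-1}$ from Proposition~\ref{thm:Hmin1-est}, the $H^1$-regularity of the discrete inverse Laplacian recorded in~\eqref{eq:Hmin2-est}, and the pointwise product bound $\|uv\|_{L^2}\le C\,\|u\|_{H^1}\,\|v\|_{H^1}$ from Corollary~\ref{cor:sobolev2}, which holds for $d\in\{2,3\}$ via the embedding $H^1\hookrightarrow L^4$. The structure of the argument mirrors the decomposition of $\DPh$ as a bounded map $H^{-1}\to H^1_0$.

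First I would regard the product $a\,b$ as an element of $H^{-1}$: since $a\in L^2$ and $b\in H^1_0$, Proposition~\ref{thm:Hmin1-est} gives $\|a\,b\|_{H^{-1}}\le C\,\|a\|_{L^2}\,\|b\|_{H^1}$ with $C$ depending only on $d$ and $\Omega$. Next I would apply $\DPh$ and invoke~\eqref{eq:Hmin2-est} — which itself rests on the identity $\DPh f=\Ph\,\DP f$, the $H^1$-stability~\eqref{eq:Proj-prop} of the Rayleigh--Ritz projection, and the continuity of $\DP\colon H^{-1}\to H^1$ — to conclude $\DPh(a\,b)\in\nVh\subset H^1_0$ together with
\[
\|\DPh(a\,b)\|_{H^1}\le C\,\|a\,b\|_{H^{-1}}\le C\,\|a\|_{L^2}\,\|b\|_{H^1}\,.
\]
Finally, since both $\DPh(a\,b)$ and $c$ lie in $H^1_0$ and $d\in\{2,3\}$, Corollary~\ref{cor:sobolev2} yields $\|\DPh(a\,b)\,c\|_{L^2}\le C\,\|\DPh(a\,b)\|_{H^1}\,\|c\|_{H^1}$, and combining this with the previous display gives the claim.

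There is essentially no hard obstacle: the argument is a three-step chaining of known inequalities. The one point that deserves care — and the reason the result is phrased with a constant depending only on $d$ and $\Omega$ — is that the step through $\DPh$ must introduce no dependence on the mesh size $h$; this is precisely guaranteed by the $h$-independence of the $H^1$-stability constant of $\Ph$. Keeping this uniformity explicit is what makes the estimate usable in the stability bounds of Sec.~\ref{sec:S-stab}, where it is applied with $h$ varying.
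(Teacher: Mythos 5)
Your proposal is correct and is essentially the paper's own proof: both arguments chain Proposition~\ref{thm:Hmin1-est} (to bound $\|a\,b\|_{H^{-1}}$), the regularity estimate~\eqref{eq:Hmin2-est} for $\DPh$, and the $L^4$--$L^4$ H\"older/Sobolev product bound of Corollary~\ref{cor:sobolev2}, merely written in the opposite order. Your remark on the $h$-uniformity of the constant, inherited from the $H^1$-stability of $\Ph$, is a correct and worthwhile observation.
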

\begin{proof}
We use H{\"o}lder's inequality,
the Sobolev embedding of $H^1$ in $L^4$,
the estimate~\eqref{eq:Hmin2-est}, and Proposition~\ref{thm:Hmin1-est}:
\begin{align*}
\|\DPh(a\,b)\,c\|_{L^2}
&\leq \| \DPh(a\,b)\|_{L^4}\,\|c\|_{L^4}
\leq C\,\| \DPh(a\,b)\|_{H^1}\,\|c\|_{H^1} \\
&\leq C\,\| a\,b\|_{H^{-1}}\,\| c\|_{H^1} \leq C\,\| a\|_{L^2}\,\|b\|_{H^1}\,\|c\|_{H^1}\,,
\end{align*}
completing the proof.
\end{proof}

\begin{proposition}
\label{thm:DPh-H1-est}
For $ a,b,c \in H^1_{0} $,
\begin{equation*}
\|(\nabla\DPh (a,b))\,c\|_{L^2}
\leq  C\,\| a\|_{L^3}\, \| b\|_{L^3}\,\|c\|_{H^1}+ C\,h\,\| a\|_{L^6}\, \| b\|_{L^6}\,\|c\|_{H^1}\,,
\end{equation*}
with a constant $C$ depending on $d$ and $\Omega$.
\end{proposition}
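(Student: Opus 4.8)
The plan is to reduce everything to the continuous Poisson solution and then treat the Galerkin error separately. Write $w := \DP(a\,b)$, so that $\DPh(a\,b) = \Ph\,w$ by~\eqref{eq:Ph-Deltahi}, and split $\nabla\DPh(a\,b) = \nabla w + \nabla(\Ph\,w - w)$; accordingly
\begin{equation*}
\|(\nabla\DPh(a\,b))\,c\|_{L^2} \leq \|(\nabla w)\,c\|_{L^2} + \|(\nabla(\Ph\,w-w))\,c\|_{L^2}\,.
\end{equation*}
Throughout I would use that $d\in\{2,3\}$, so $H^1_0\hookrightarrow L^6$. The two summands are designed to produce, respectively, the first and second term of the claimed bound.

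For the first term I would apply H\"older's inequality and the embedding $H^1_0\hookrightarrow L^6$,
\begin{equation*}
\|(\nabla w)\,c\|_{L^2} \leq \|\nabla w\|_{L^3}\,\|c\|_{L^6} \leq C\,\|\nabla\DP(a\,b)\|_{L^3}\,\|c\|_{H^1}\,,
\end{equation*}
and then bound $\|\nabla\DP(a\,b)\|_{L^3}$ by elliptic $W^{2,p}$-regularity (available since $\partial\Omega$ is smooth), the Sobolev embedding $W^{2,3/2}\hookrightarrow W^{1,3}$ which holds for $d\le 3$, and a final H\"older step,
\begin{equation*}
\|\nabla\DP(a\,b)\|_{L^3} \leq C\,\|\DP(a\,b)\|_{W^{2,3/2}} \leq C\,\|a\,b\|_{L^{3/2}} \leq C\,\|a\|_{L^3}\,\|b\|_{L^3}\,.
\end{equation*}

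For the second term the same H\"older/embedding step gives $\|(\nabla(\Ph\,w-w))\,c\|_{L^2} \leq C\,\|\nabla(\Ph\,w-w)\|_{L^3}\,\|c\|_{H^1}$, and I would estimate the $W^{1,3}$-error of the Rayleigh--Ritz projection in the standard way: writing $\Ph\,w - w = \Ph(w-\Ih\,w) - (w-\Ih\,w)$ (using $\Ph\,\Ih\,w = \Ih\,w$), invoking $W^{1,3}$-stability of $\Ph$ on the quasi-uniform mesh, and then the interpolation estimate of Theorem~\ref{thm:Brenner-Scott1} with $m=2$, $s=1$, $p=3$ (note $2-d/3>0$ for $d\le 5$, so $\Ih$ is well defined on $w\in W^{2,3}$). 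Together with elliptic regularity and H\"older this yields
\begin{equation*}
\|\nabla(\Ph\,w-w)\|_{L^3} \leq C\,h\,|w|_{W^{2,3}} = C\,h\,|\DP(a\,b)|_{W^{2,3}} \leq C\,h\,\|a\,b\|_{L^3} \leq C\,h\,\|a\|_{L^6}\,\|b\|_{L^6}\,.
\end{equation*}
Adding the two contributions gives the assertion.

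The main obstacle is the sharp power of $h$ in the second term: it rests on $W^{1,p}$-stability of $\Ph$, which for $p<\infty$ holds on quasi-uniform meshes but is not among the estimates recorded in the excerpt (only $H^1$-stability is stated). If one prefers to use only a standard $L^p$-inverse inequality (cf.\ Theorem~\ref{thm:Brenner-Scottinv}) together with the $H^2$-regularity estimate~\eqref{eq:Omega-property}, one instead bounds $\|\nabla(\Ph\,w-\Ih\,w)\|_{L^3}\leq C\,h^{-d/6}\,\|\Ph\,w-\Ih\,w\|_{H^1}\leq C\,h^{1-d/6}\,|w|_{H^2}$, obtaining the slightly weaker factor $h^{1-d/6}\in\{h^{2/3},h^{1/2}\}$ in place of $h$; this is still $\to 0$ as $h\to 0$, which is all that the applications of this proposition (Corollary~\ref{thm:DPh-H1-est-corollary} and the $H^1$-stability proof of Proposition~\ref{thm:Sh-stability}) actually use.
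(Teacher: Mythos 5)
Your proof is correct and follows essentially the same route as the paper's: H\"older with $L^3\times L^6$, a split of $\nabla\DPh(a\,b)$ into the continuous Poisson solution plus the Galerkin error, elliptic $L^p$-regularity for the former and an $\nO(h)$ bound in $W^{1,3}$ for the latter. You are in fact more careful than the paper at the one delicate step --- the paper simply invokes the interpolation estimate of Theorem~\ref{thm:Brenner-Scott1} to get $\|\nabla(\DPh-\DP)(a\,b)\|_{L^3}\leq C\,h\,|\DP(a\,b)|_{W^2_3}$ for the Ritz-projection error, whereas you correctly flag that this requires $W^{1,p}$-stability of $\Ph$ on the quasi-uniform mesh (or else accept the harmless weaker factor $h^{1-d/6}$ via the inverse estimate).
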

\begin{proof}

We use H{\"o}lder's inequality, apply Theorem~\ref{thm:Brenner-Scott1},
and use the Sobolev embedding of $H^1$ in $L^6$,
\begin{align*}
\|(\nabla\DPh (a\,b))\,c\|_{L^2}
& \leq \|\nabla\DPh (a\,b)\|_{L^3}\,\|c\|_{L^6} \\
& \leq \|\nabla(\DPh-\DP)\,(a\,b)\|_{L^3}\,\| c \|_{L^6} +
       \| \nabla\DP (a\, b)\|_{L^3}\,\| c \|_{L^6}\\
& \leq C\,h\,|\DP (a\,b)|_{W^2_3}\,\| c\|_{L^6} +  \|\DP (a\,b)\|_{W^1_3}\,\| c \|_{L^6} \\
& \leq C\,h\,\| a\, b \|_{L^3}\,\| c \|_{L^6} + \| a\, b \|_{W^{-1}_3}\,\| c \|_{L^6}\\
& \leq C\,h\,\| a \|_{L^6}\,\| b\|_{L^6}\,\| c\|_{H^1} + C\,\| a\,b \|_{W^{-1}_3}\,\| {\revred c} \|_{H^1}\\
& \leq C\,h\,\| a \|_{L^6}\,\| b\|_{L^6}\,\| c \|_{H^1} + C\,\| a \|_{L^3}\,\| b \|_{L^3}\,\| c \|_{H^1} \,,
\end{align*}
where the last inequality follows from a duality argument and the H\"older inequality, $\| a\, b\|_{W^{-1}_3}\leq \| a \|_{L^3}\, \| b\|_{L^3}$.
\end{proof}

\begin{corollary}
\label{thm:DPh-H1-est-corollary}
For $a,b,c \in H^1_{0}$,
\begin{equation}
\label{eq:DPh-H1-est-corr}
\|(\nabla\DPh (a \, b))\,c\|_{L^2}
\leq  C\,(1+h) \| a\|_{H^1}\,\| b \|_{H^1}\,\|c\|_{H^1}\,.
\end{equation}
\end{corollary}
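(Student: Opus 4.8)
The plan is to obtain this as an immediate consequence of Proposition~\ref{thm:DPh-H1-est}, which has already isolated the two relevant mixed norms. First I would invoke that proposition to get
\[
\|(\nabla\DPh (a \, b))\,c\|_{L^2}
\leq  C\,\| a\|_{L^3}\, \| b\|_{L^3}\,\|c\|_{H^1}+ C\,h\,\| a\|_{L^6}\, \| b\|_{L^6}\,\|c\|_{H^1}\,,
\]
with $C$ depending only on $d$ and $\Omega$.

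The single remaining ingredient is the continuous Sobolev embedding $H^1(\Omega)\hookrightarrow L^q(\Omega)$, valid for $q=3$ and $q=6$ in space dimension $d\in\{2,3\}$ because $\Omega$ is bounded with smooth boundary (in particular it satisfies the cone condition); this is exactly the kind of estimate collected in Corollary~\ref{cor:sobolev2} and the surrounding discussion. Applying it to each of the four factors $\|a\|_{L^3}$, $\|b\|_{L^3}$, $\|a\|_{L^6}$, $\|b\|_{L^6}$ and absorbing the embedding constants into $C$ yields
\[
\|(\nabla\DPh (a \, b))\,c\|_{L^2}
\leq C\,\|a\|_{H^1}\,\|b\|_{H^1}\,\|c\|_{H^1} + C\,h\,\|a\|_{H^1}\,\|b\|_{H^1}\,\|c\|_{H^1}\,,
\]
and collecting the two terms gives the claimed bound~\eqref{eq:DPh-H1-est-corr} with the factor $(1+h)$ and a constant $C$ depending only on $d$ and $\Omega$.

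There is essentially no obstacle here, since the corollary is a one-line reduction of Proposition~\ref{thm:DPh-H1-est}. The only point worth noting is that for $d=3$ the embedding $H^1\hookrightarrow L^6$ is the critical one, so the constant (though not the order in $h$) depends on the dimension — which is already reflected in the statement — while for $d=2$ the embedding is non-critical and hence comfortably available.
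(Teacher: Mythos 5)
Your argument is correct and is exactly the paper's proof: invoke Proposition~\ref{thm:DPh-H1-est} and then apply the Sobolev embeddings of $H^1$ into $L^3$ and $L^6$ (valid for $d\in\{2,3\}$ on a bounded smooth domain) to each factor, absorbing the constants. Nothing further is needed.
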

\begin{proof}
This follows from Proposition~\ref{thm:DPh-H1-est} and the Sobolev embeddings of $H^1$ in $L^3$ and $L^6$.
\end{proof}

\subsection{Conditional $ H^1$-stability of the evolution operator $ \ebh(t,\,\cdot\,) $}

\begin{proposition}
\label{thm:EBh-H1-est}
For $\phi,\xi \in\nV^h$,
the evolution operator $ \ebh(t,\,\cdot\,) $ defined in~\eqref{eq:def-ebh} satisfies
\begin{equation}
\label{eq:EBh-H1-est}
\| \ebhtee{\phi}\,\xi\|_{H^1} \leq \ee^{t\,C \| \phi\|_{H^1}^2} \norm{H^1}{\xi}\,,
\end{equation}
with a constant $C$ depending on $d$ and $\Omega$.
\end{proposition}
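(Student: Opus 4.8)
The plan is to reduce the claim to a scalar Gr\"onwall inequality assembled from the differential inequality~\eqref{eq:H1-EBh-property} and the bound~\eqref{eq:DPh-H1-est-corr}. First I would fix $\phi,\xi\in\nVh$, set $\psi_h(t)=\ebh(t,\phi)\,\xi\in\nVh$ — the solution of~\eqref{eq:ebh-weak-problem} with $w_h=\phi$, $u_h=\xi$ — and note that, since $\phi$ does not depend on $t$, the potential $\pot_h=\DPh(|\phi|^2)$ is a fixed function. From Proposition~\ref{thm:discrete-conservation} I would then record the two ingredients I need: conservation of the $L^2$-norm along the flow, $\|\psi_h(t)\|_{L^2}=\|\xi\|_{L^2}$ (cf.~\eqref{eq:discrete-conservation-B-L2}), and the differential inequality for the $H^1$-seminorm, $\partial_t|\psi_h(t)|_{H^1}\le\|\psi_h(t)\,\nabla\pot_h\|_{L^2}$.

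Next I would pass from the seminorm to the full norm. Since $\|\psi_h(t)\|_{H^1}^2=\|\psi_h(t)\|_{L^2}^2+|\psi_h(t)|_{H^1}^2$ and the first summand is constant in $t$, the quantity $\|\psi_h(t)\|_{H^1}^2$ is differentiable (which sidesteps the non-smoothness of the seminorm at its zeros) and
\[
\tfrac12\,\partial_t\|\psi_h(t)\|_{H^1}^2=|\psi_h(t)|_{H^1}\,\partial_t|\psi_h(t)|_{H^1}\le\|\psi_h(t)\|_{H^1}\,\|\psi_h(t)\,\nabla\pot_h\|_{L^2}\,.
\]
To estimate the remaining factor I would apply Corollary~\ref{thm:DPh-H1-est-corollary}, i.e.~\eqref{eq:DPh-H1-est-corr}, with $a=\phi$, $b=\ol{\phi}$ and $c=\psi_h(t)$, using that $\pot_h=\DPh(\phi\,\ol\phi)$ and that complex conjugation is an isometry of $H_0^1$, so $\|\ol\phi\|_{H^1}=\|\phi\|_{H^1}$ and hence $\|\psi_h(t)\,\nabla\pot_h\|_{L^2}\le C\,(1+h)\,\|\phi\|_{H^1}^2\,\|\psi_h(t)\|_{H^1}$. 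Combining the two estimates gives $\partial_t\|\psi_h(t)\|_{H^1}^2\le 2C(1+h)\,\|\phi\|_{H^1}^2\,\|\psi_h(t)\|_{H^1}^2$, and Gr\"onwall's lemma together with $h<1$ (absorbing $1+h$ into $C$) yields $\|\psi_h(t)\|_{H^1}^2\le\ee^{2Ct\|\phi\|_{H^1}^2}\|\xi\|_{H^1}^2$; taking square roots is~\eqref{eq:EBh-H1-est}.

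I do not expect a serious obstacle: the argument is essentially an energy estimate. The only two points that require a little care are the ones flagged above — working with $\|\cdot\|_{H^1}^2$ rather than the bare seminorm so that the Gr\"onwall step applies cleanly, and invoking~\eqref{eq:DPh-H1-est-corr} with the conjugated argument $\ol\phi$ so that $\DPh(|\phi|^2)$ genuinely has the product form $\DPh(a\,b)$ demanded by Corollary~\ref{thm:DPh-H1-est-corollary}.
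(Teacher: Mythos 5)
Your proposal is correct and follows essentially the same route as the paper's proof: the differential inequality~\eqref{eq:H1-EBh-property}, Corollary~\ref{thm:DPh-H1-est-corollary} applied to $\pot_h=\DPh(\phi\,\ol{\phi})$, and a Gr\"onwall argument with $1+h$ absorbed for $h<1$. Your extra care in passing from the $H^1$-seminorm to the full norm via $\|\cdot\|_{H^1}^2$ and $L^2$-conservation is a slight tidying of a step the paper treats more loosely, but the argument is the same.
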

\begin{proof} Let $ \psi_h = \ebhtee{\phi}\,\xi $ and
	$ \pot_h = \Delta_h^{-1}(|\phi|^2) = \Delta_h^{-1}(\phi \cdot \ol{\phi}) $. According to~\eqref{eq:H1-EBh-property},
	\begin{equation*}
	\partial_t\,\norm{H^1}{\psi_h} = \partial_t |\psi_h|_{H^1}
	\leq \norm{L^2}{\psi_h\,\nabla\pot_h} \,.
	\end{equation*}
	From Corollary~\ref{thm:DPh-H1-est-corollary} we obtain
	\begin{align*}
	\partial_t\,\norm{H^1}{\psi_h} = \norm{L^2}{\psi_h\,\nabla\pot_h} & \leq C\,(1+h) \| \phi\|_{H^1}\,\| \phi \|_{H^1}\,\|\psi_h\|_{H^1}
	\end{align*}
	which entails~\eqref{eq:EBh-H1-est} for $h<1$.
\end{proof}

\subsection{$ H^m $-regularity of the semi-discrete splitting solution}
Here we show that an $H^m$-bound for the semi-discrete splitting solution
$\nnS{n} \ini$ defined in~\eqref{eq:Strang-splitting} depends linearly on the $H^m$-norm
of the initial value $ \psi_0 $ times an exponential function
depending on lower order Sobolev norms.
Hence for bounded times $n \t \leq T$, the $H^m$-norm of the semi-discrete splitting solution
will not behave worse than the $H^m$-norm of the initial value.

\begin{proposition}
	\label{thm:Hm-regularity}
	Let $m\in \mathbb{N}$. If $\ini\in H^m$ and
	\begin{equation*}
	\| \nnS{n}\ini \|_{H^{1}} \leq M_1 \quad \text{for all $n$~with }\, n\t\leq T,
	\end{equation*}
	then
	\begin{equation}
	\label{eq:Hm-reg-bound}
	\| \nnS{n}\ini \|_{H^m} \leq \ee^{L_m\, n \t}\,\| \ini\|_{H^m}\quad \text{for} ~~ n \t \leq T\,, \quad m \geq 2\,,
	\end{equation}
   	where $L_m$ depends on $M_1$ and on $\| \ini\|_{H^j}$
    for all $j<m$. The specific dependence is indicated in the proof.
\end{proposition}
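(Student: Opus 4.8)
The plan is to argue by induction on $m$: for every index $k\ge 2$ I will prove a \emph{one-step} bound for the subflow $\EB$ at level $k$, from which the proposition at level $k$ follows by chaining. For $m=1$ the bound is the hypothesis (with $L_1=0$). Assume $m\ge2$ and that the one-step bound, and hence the proposition, holds for all indices $<m$; in particular $\sup_{n\t\le T}\|\nnS{n}\ini\|_{H^j}\le\tilde M_j$ for $j<m$, where $\tilde M_j$ depends only on $M_1$ and $\|\ini\|_{H^i}$, $i\le j$. Since $\nS(\t,u)=\EA(\half\t)\,\EB(\t,w)$ with $w=\EA(\half\t)u$, and $\EA(t)$ conserves every $H^k$-norm (Remark~\ref{rem:Hk-cons}), it suffices to establish
\begin{equation*}
\|\EB(\t,w)\|_{H^m}\le\ee^{C_m(w)\,\t}\,\|w\|_{H^m},
\end{equation*}
where the growth rate $C_m(w)$ depends on $d$, $\Omega$ and only on the norms $\|w\|_{H^j}$ with $j<m$. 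Chaining this over $u=\nnS{j}\ini$, $j=0,\dots,n-1$, so that $\|w\|_{H^j}=\|\nnS{j}\ini\|_{H^j}\le\tilde M_j$ (or $M_1$) stays below the induction thresholds and hence $C_m(w)\le L_m$ uniformly in $j$, then yields $\|\nnS{n}\ini\|_{H^m}\le\ee^{L_m n\t}\|\ini\|_{H^m}$, which is~\eqref{eq:Hm-reg-bound}.

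For the one-step bound, recall that $\psi(t)=\EB(t,w)=\eb(t,w)w$ solves $\partial_t\psi=-\ii\,\pot\,\psi$ with the fixed, \emph{real} potential $\pot=\DP(|w|^2)$, so that $\psi(t)\in H^m\cap H^1_0$ by standard well-posedness for the linear Schr\"odinger flow with a smooth bounded potential. Differentiating $\|\psi(t)\|_{H^m}^2=\sum_{|\alpha|\le m}\|\partial^\alpha\psi\|_{L^2}^2$ and writing $\partial^\alpha(\pot\,\psi)=\pot\,\partial^\alpha\psi+[\partial^\alpha,\pot]\,\psi$, the leading term contributes $\Re(-\ii\,\pot\,\partial^\alpha\psi,\partial^\alpha\psi)_{L^2}=0$ since $\pot$ is real, and one is left with
\begin{equation*}
\partial_t\|\psi\|_{H^m}\le C\max_{|\alpha|\le m}\|[\partial^\alpha,\pot]\,\psi\|_{L^2},\qquad
[\partial^\alpha,\pot]\,\psi=\sum_{0<\beta\le\alpha}\binom{\alpha}{\beta}(\partial^\beta\pot)(\partial^{\alpha-\beta}\psi).
\end{equation*}
Every summand carries at most $m-1$ derivatives on $\psi$, and Moser/Kato--Ponce-type estimates assembled from the product and Sobolev-embedding inequalities of Corollary~\ref{cor:sobolev2} bound it by $C\bigl(\|\pot\|_{H^3}+\|\pot\|_{H^m}\bigr)\|\psi\|_{H^{\max(m-1,2)}}$; in the low cases $m\in\{2,3\}$ the H\"older exponents have to be chosen through the embeddings $H^1\hookrightarrow L^4,L^6$ rather than via a naive $L^\infty$ estimate on $\partial\pot$, in the same spirit as the $\eta_1,\eta_2$ bookkeeping in the proof of Theorem~\ref{thm:interpS-err}. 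The crucial observation is that, by the two-derivative gain of $\DP$ on a smooth domain (cf.~\eqref{eq:Omega-property} and~\cite[Sec.~5.5]{brennerscott02}) together with the algebra/embedding bounds for $|w|^2$, one has $\|\pot\|_{H^3},\|\pot\|_{H^m}\le C\,q\bigl(\|w\|_{H^1},\dots,\|w\|_{H^{m-1}}\bigr)$ for a polynomial $q$, i.e.\ in terms of Sobolev norms of $w$ of order strictly below $m$.

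It remains to close the Gronwall loop. For $m\ge3$ the $\psi$-factor above is $\|\psi(t)\|_{H^{m-1}}$, controlled by the one-step bound at level $m-1$ (induction), so integrating in $t$ gives $\|\psi(\t)\|_{H^m}\le\|w\|_{H^m}+A(w)\,\t$ with $A(w)$ depending only on $\|w\|_{H^j}$, $j<m$; absorbing the remainder via $\t\,A(w)\le(\t A(w)/\|w\|_{L^2})\,\|w\|_{H^m}$ (using that $\|w\|_{L^2}=\|\ini\|_{L^2}$ is conserved and positive) yields the one-step bound. For $m=2$ one obtains instead $\partial_t\|\psi\|_{H^2}\le C\|w\|_{H^1}^2\,\|\psi\|_{H^2}$ directly and plain Gronwall suffices. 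I expect the main obstacle to be exactly this bookkeeping: one has to make sure that no norm of order $m$ of $w$ — equivalently, of the iterate $\nnS{j}\ini$ — enters the growth rate $C_m$, since otherwise the step-to-step recursion would become nonlinear and the induction would fail to close; the low-regularity cases $m=2,3$, in which the generic algebra property of $H^{m-2}$ is not available, are precisely where this must be verified by hand using the Sobolev embeddings of Corollary~\ref{cor:sobolev2}.
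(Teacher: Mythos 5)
Your proposal is correct, and it rests on the same mechanism as the paper's proof: since $\EA$ conserves all $H^k$-norms, everything reduces to an energy estimate for $\EB(\cdot,w)$, where the elliptic gain $\|\DP(|w|^2)\|_{H^k}\leq C\,\||w|^2\|_{H^{k-2}}$ ensures that the potential is controlled by Sobolev norms of $w$ of order strictly below $m$ (for $m\geq3$), so that Gronwall plus the already-established bounds at lower order close the argument. The differences are in the execution, and they are worth noting. First, the paper does not use your commutator cancellation $\Re\scpbig{L^2}{-\ii\,\pot\,\partial^\alpha\psi}{\partial^\alpha\psi}=0$; it keeps the top-order term $C\|w\|_{H^1}^2\|\psi\|_{H^m}$ in the right-hand side of \eqref{eq:DP-bound-m} and removes it by Gronwall, which produces the recurring prefactor $\ee^{\t C M_1^2}$. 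Your cancellation eliminates that term outright for $m\geq3$ and is a genuine (if minor) sharpening. Second, the paper carries the additive lower-order contribution through an explicit discrete recursion in $n$ and absorbs it at the end via $\|\ini\|_{H^{m-1}}\leq\|\ini\|_{H^m}$ and $1+x\leq\ee^x$, worked out case by case for $m=2,3,4,5$ with the remark that higher $m$ is "analogous but technically more involved"; you instead package everything into a one-step multiplicative bound $\|\EB(\t,w)\|_{H^m}\leq\ee^{C_m(w)\t}\|w\|_{H^m}$ valid for all $m$ by induction, which actually supplies the uniform-in-$m$ argument the paper only sketches. The price of your absorption step $\t A(w)\leq\bigl(\t A(w)/\|w\|_{L^2}\bigr)\|w\|_{H^m}$ is an inverse dependence of $L_m$ on the conserved quantity $\|\ini\|_{L^2}$, which is harmless (the case $\ini=0$ being trivial) but differs from the paper's constants, which depend only monotonically on $M_1$ and $\|\ini\|_{H^j}$, $j<m$. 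Your handling of the low-regularity cases — plain Gronwall for $m=2$, and H\"older exponents routed through $H^1\hookrightarrow L^4, L^6$ and $H^2\hookrightarrow L^\infty$ for $m=2,3$ where $H^{m-2}$ is not an algebra — matches the bookkeeping the paper performs via \eqref{eq:DP-bounds} and is exactly the point that needs care for the induction to close.
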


\begin{proof}
    Since $\EA$ conserves the $H^m$-norm, we only consider the properties of the splitting operator $\EB$,
    which is the solution of (see~Sec.~\ref{sec:setting})
		\begin{equation}
        \begin{cases}
        ~\partial_\t \psi = -\,\ii\,\DP(|u|^2)\,\psi\,, \label{eq:Hm-reg-1}  \\
        ~\psi\big|_{t=0} = u\,.
        \end{cases}
		\end{equation}
	The basic idea is to bound the right-hand side of~\eqref{eq:Hm-reg-1} in the corresponding $H^m$-norm, using the following estimates.
	By the H\"older inequality and the Sobolev embeddings of $H^2$ in $L^\infty$ and $H^1$ in $L^4$, we have
	 \begin{align*}
	 \| u\, v\|_{H^m}\leq C\, \sum_{j=2}^{m}\| u\|_{H^j} \| {\revred v} \|_{H^{m-j+2}}\,,\quad m\geq 2\,.
	 \end{align*}
	 We further use the bound
	 \begin{align*}
	 \begin{aligned}
	 \| \DP(|u|^2)\|_{H^2} & \leq C\, \| |u|^2\|_{L^2} \leq C\, \|u \|_{H^1}^2\,,\\
	 \| \DP(|u|^2)\|_{H^m} & \leq C\, \| |u|^2\|_{H^{m-2}}\leq C\, \|u\|_{H^l}^2\,, \quad l=\max\{2,m-2\}\,, \quad \text{for}~ m \geq 3\,,
	 \end{aligned}
	 \end{align*}
	 and obtain
     \begin{subequations}
     \label{eq:DP-bounds}
	 \begin{align}
	 \| \DP(|u|^2)\, v\|_{H^2} &\leq C\, \|u \|_{H^1}^2 \| v\|_{H^2}\,, \label{eq:DP-bound-2}\\
	 \| \DP(|u|^2)\, v\|_{H^m} &\leq C\, \sum_{j=2}^{m-2} \| u \|_{H^{m-j}}^2 \| v\|_{H^j} + C\, \|u \|_{H^2}^2 \| v\|_{H^{m-1}} + C\, \| u \|_{H^1}^2  \| v\|_{H^m}
     \quad \text{for}\ m\geq 3\,.\label{eq:DP-bound-m}
	 \end{align}
     \end{subequations}
	For the proof of~\eqref{eq:Hm-reg-bound},
    we first proceed along the lines of the arguments from~\cite{lubich07},
    where the result was shown for $m=2$ and then extend the result for $m\in\{3,4,5\}$.
    For higher values of $m$ the proof works analogously but becomes technically more and more involved.
\begin{description}	
\item[$ \bullet $ ${m=2}\,$] From~\eqref{eq:Hm-reg-1} we obtain an integral inequality in the $H^2$-norm using the bound~\eqref{eq:DP-bound-2},
	\begin{equation*}
	\| \psi(\t)\|_{H^2} \leq \| u \|_{H^2} + \int_0^\t C\, M_1^2\,\| \psi(\s) \|_{H^2}\,\dd \s\,,
	\end{equation*}
	where $\| u \|_{H^1} \leq M_1$.
	
	By a Gronwall argument it follows that
	\begin{equation*}
	\| \psi (\t)\|_{H^2} \leq \ee^{\t\,C\, M_1^2}\,\| u\|_{H^2}\,,
	\end{equation*}
	and thus $\| \nS\, \ini\|_{H^2}\leq \ee^{\t\,C\, M_1^2}\,\| \ini\|_{H^2}$. Iterative application yields
	\begin{align}
	\label{eq:Hm-reg-m2bound}
	\| \nnS{n} \ini\|_{H^2}\leq \ee^{n\t\,C\, M_1^2}\,\| \ini\|_{H^2}
	\end{align}
	and the constant $L_2$ reads $L_2= C\, M_1^2$.
	
\item[$ \bullet $ $ m=3\,$] Again, we bound the right-hand side of~\eqref{eq:Hm-reg-1}. By~\eqref{eq:DP-bound-m}, we obtain for $m=3$,
\begin{align*}
\| \DP(|u|^2)\, \psi(\t)\|_{H^3} &\leq C\, \| u\|_{H^1}^2\| \psi(\t)\|_{H^3} + C\, \| u\|_{H^2}^2\| \psi(\t)\|_{H^2}\,.
\end{align*}
Hence,
\begin{align*}
\| \psi (\t)\|_{H^3} & \leq \| u\|_{H^3}
+ \int_0^\t \!\big( C\, \| u\|_{H^1}^2\| \psi(\sigma)\|_{H^3} + C\, \| u\|_{H^2}^2\| \psi(\s)\|_{H^2} \big)\,\dd\sigma\,,
\end{align*}
and by a Gronwall argument it follows that
\begin{align*}
\| \psi (\t)\|_{H^3} \leq \ee^{\t\,C\, M_1^2}\,\big(\| u\|_{H^3} + \t\, \sup_{0\leq \sigma \leq \t}\,C\, \| u\|_{H^2}^2\| \psi(\sigma)\|_{H^2}\big)\,.
\end{align*}
Setting $\psi=\nnS{n} \ini$, $u=\nnS{n-1} \ini$, and inserting the bound~\eqref{eq:Hm-reg-m2bound} for the $H^2$-norms, we conclude that
\begin{align*}
\| \nnS{n} \ini\|_{H^3}&\leq \ee^{\t\,C\, M_1^2}\,\| \nnS{n-1} \ini\|_{H^3} + \t \,C\, \ee^{\t\,C\, M_1^2}\, \ee^{2(n-1)\t\,C\, M_1^2}\,\| \ini\|_{H^2}^2\, \ee^{n\t\,C\, M_1^2}\,\| \ini\|_{H^2}\\
& \leq \ee^{\t\,C\, M_1^2}\,\|  \nnS{n-1} \ini\|_{H^3} + \t \,C\, \ee^{3 n\t\,C\, M_1^2}\,\| \ini\|_{H^2}^3 \\
& \leq \ee^{n\t\, C\, M_1^2}\, \| \ini\|_{H^3} + \sum_{i=0}^{n-1} \t \,C\, \ee^{3 n\t\,C\, M_1^2}\,\| \ini\|_{H^2}^3 \\
& \leq \ee^{n\t\, C\, M_1^2}\, \| \ini\|_{H^3} + n \t \,C\, \ee^{3 n\t\,C\, M_1^2}\,\| \ini\|_{H^2}^2\, \|\ini\|_{H^3} \\
& \leq \ee^{3n\t\, C\, M_1^2}\,  \big( 1+ n \t \,C\, \| \ini\|_{H^2}^2\big) \| \ini\|_{H^3}\\
& \leq \ee^{3n\t\, C\, M_1^2}\,  \ee^{n\t\, C\, \| \ini\|_{H^2}^2}\| \ini\|_{H^3}\\
& \leq \ee^{n\t\, C\, (3\,M_1^2+\| \ini\|_{H^2}^2)}\,\| \ini\|_{H^3}\,.
\end{align*}
\item[$ \bullet $ $m=4\,$] From~\eqref{eq:DP-bound-m} and~\eqref{eq:Hm-reg-1} we obtain
\begin{align*}
\| \psi (\t)\|_{H^4} & \leq \| u\|_{H^4} + \int_0^\t \big( C\, \| u\|_{H^1}^2\| \psi(\sigma)\|_{H^4} + C\, \| u\|_{H^2}^2\| \psi(\sigma)\|_{H^3} +  C\, \| u\|_{H^2}^2\| \psi(\sigma)\|_{H^2} \big)\, \dd\sigma\,,
\end{align*}
hence
\begin{align*}
\| \psi (\t)\|_{H^4} \leq \ee^{\t\,C\, M_1^2}\,
\Big(\| u\|_{H^4} + \t\,C\,\| u\|_{H^2}^2 \sup_{0\leq \sigma \leq \t}\,\big( \| \psi(\s)\|_{H^3} + \| \psi(\s)\|_{H^2}\big) \Big)\,.
\end{align*}
Setting $\psi=\nnS{n} \ini$ and $u=\nnS{n-1} \ini$ and following the argument for $m=3$, we conclude that
\begin{align*}
\| \nnS{n} \ini\|_{H^4}&\leq \ee^{\t\,C\, M_1^2}\,\|  \nnS{n-1} \ini\|_{H^4} \\
& \qquad + \t \,C\,  \ee^{2n\t\,C\, M_1^2}\,\| \ini\|_{H^2}^2\,\big( \ee^{n\t\, C\, (3\,M_1^2+\| \ini\|_{H^2}^2)}\,\| \ini\|_{H^3} +  \ee^{n\t\,C\, M_1^2}\,\| \ini\|_{H^2}\big) \\
& \leq \ee^{\t\,C\, M_1^2}\,\|  \nnS{n-1} \ini\|_{H^4} + \t \,C\, \ee^{n\t\, C\, (5\,M_1^2+\| \ini\|_{H^2}^2)}\, \| \ini\|_{H^2}^2\,\| \ini\|_{H^3} \\
& \leq \ee^{n\t\, C\, M_1^2}\, \| \ini\|_{H^4} + n \t \,C\, \ee^{n\t\, C\, (5\,M_1^2+\| \ini\|_{H^2}^2)}\, \| \ini\|_{H^2}^2\,\| \ini\|_{H^4} \\
& \leq \ee^{n\t\, C\, (5\,M_1^2+\| \ini\|_{H^2}^2)}\,  \big( 1+ n \t \,C\, \| \ini\|_{H^2}^2\big) \| \ini\|_{H^4}\\
& \leq\ee^{n\t\, C\, (5\,M_1^2+2\,\| \ini\|_{H^2}^2)}\,\| \ini\|_{H^4}\,.
\end{align*}
\item[$ \bullet $ $m=5\,$] In a similar way as before we obtain
\begin{align*}
\| \psi (\t)\|_{H^5} & \leq \| u\|_{H^5} + C\,\int_0^\t   \| u\|_{H^1}^2\| \psi(\sigma)\|_{H^5} + \| u\|_{H^2}^2(\| \psi(\sigma)\|_{H^4} +\| \psi(\sigma)\|_{H^3}) +   \| u\|_{H^3}^2\| \psi(\sigma)\|_{H^2}\, \dd\sigma
\end{align*}
and thus
\begin{align*}
\| \nnS{n} \ini\|_{H^5}&\leq \ee^{\t\,C\, M_1^2}\,\|  \nnS{n-1} \ini\|_{H^5} + \t \,C\,  \ee^{2n\t\,C\, M_1^2}\,\| \ini\|_{H^2}^2\,\big( \ee^{n\t\, C\, (5\,M_1^2+2\,\| \ini\|_{H^2}^2)}\,\| \ini\|_{H^4} \\
& \qquad  +  \ee^{n\t\, C\, (3\,M_1^2+\| \ini\|_{H^2}^2)}\,\| \ini\|_{H^3}\big) + \t \, C\, \ee^{2 n\t\, C\, (3\,M_1^2+\| \ini\|_{H^2}^2)} \| \ini \|_{H^3}^2 \, \ee^{n\t\,C\, M_1^2}\,\| \ini\|_{H^2}\\
& \leq \ee^{\t\,C\, M_1^2}\,\|  \nnS{n-1} \ini\|_{H^5} + \t \,C\, \ee^{n\t\, C\, (7\,M_1^2+2\| \ini\|_{H^2}^2)}\, \| \ini\|_{H^2}\,\| \ini\|_{H^3}\, \|\ini\|_{H^4} \\
& \leq \ee^{n\t\, C\, (7\,M_1^2+2\| \ini\|_{H^2}^2)}\,  \big( 1+ n \t \,C\, \| \ini\|_{H^3}^2\big) \| \ini\|_{H^5}\\
& \leq\ee^{n\t\, C\, (7\,M_1^2+2\,\| \ini\|_{H^2}^2+ \|\ini\|_{H^3}^2)}\,\| \ini\|_{H^5}\,.
\end{align*}
\end{description}
\end{proof}

}

%

\bibliographystyle{plain}

\begin{thebibliography}{10}

\bibitem{adams75}
R.A. Adams.
\newblock {\em Sobolev Spaces}.
\newblock Academic Press, Orlando, Fla., 1975.

\bibitem{baoetal13a}
X.~Antoine, W.~Bao, and Ch. Besse.
\newblock Computational methods for the dynamics of the nonlinear
  {S}chr{\"o}dinger/{G}ross--{P}itaevskii equations.
\newblock {\em Comput. Phys. Commun.}, 184:2621--2633, 2013.

\bibitem{antoinebesseklein12}
X.~Antoine, C.~Besse, and P.~Klein.
\newblock Numerical solution of time-dependent nonlinear {S}chr{\"o}dinger
  equations using domain truncation techniques coupled with relaxation scheme.
\newblock {\em Laser Physics}, 21:1--12, 2011.

\bibitem{auzingeretal13b}
W.~Auzinger, H.~Hofst{\"a}tter, O.~Koch, and M.~Thalhammer.
\newblock Defect-based local error estimators for splitting methods, with
  application to {S}chr\"{o}dinger equations, {P}art {III}: The nonlinear case.
\newblock {\em J. Comput. Appl. Math.}, 273:182--204, 2014.

\bibitem{splithp}
W.~Auzinger and O.~Koch.
\newblock Coefficients of various splitting methods.
\newblock \texttt{http://www.asc.tuwien.ac.at/\~{}winfried/splitting/}.

\bibitem{baodft12}
G.~Bao, G.~Hu, and D.~Liu.
\newblock An $h$-adaptive finite element solver for the calculations of the
  electronic structures.
\newblock {\em J. Comput. Phys.}, 231:4967--4979, 2012.

\bibitem{baobecreview13}
W.~Bao and Y.~Cai.
\newblock Mathematical theory and numerical methods for {B}ose--{E}instein
  condensation.
\newblock {\em Kinet. Relat. Mod.}, 6:1--135, 2013.

\bibitem{bao15speq}
W.~Bao, S.~Jiang, Q.~Tang, and Y.~Zhang.
\newblock Computing the ground state and dynamics of the nonlinear
  {S}chr{\"o}dinger equation with nonlocal interactions via the nonuniform
  {FFT}.
\newblock {\em J. Comput. Phys.}, 296:72--89, 2015.

\bibitem{brennerscott02}
S.C. Brenner and L.R. Scott.
\newblock {\em The Mathematical Theory of Finite Element Methods}.
\newblock Springer-Verlag, New York, 2nd edition, 2002.

\bibitem{bremar91}
F.~Brezzi and P.~Markowitsch.
\newblock The three-dimensional {W}igner--{P}oisson problem: {E}xistence,
  uniqueness and approximation.
\newblock {\em Math. Methods Appl. Sci.}, 14:35--61, 1991.

\bibitem{carles13}
R.~Carles.
\newblock On {F}ourier time-splitting methods for nonlinear {S}chr{\"o}dinger
  equations in the semiclassical limit.
\newblock {\em SIAM J. Numer. Anal.}, 51:3232--3258, 2013.

\bibitem{Coh02}
G.~Cohen.
\newblock {\em Higher-{O}order {N}umerical {M}ethods for {T}ransient {W}ave
  {E}quations}.
\newblock Springer, Berlin--Heidelberg--New York, 2002.

\bibitem{fangetal12}
J.~Fang, X.~Gao, and A.~Zhou.
\newblock A {K}ohn--{S}ham equation solver based on hexahedral finite elements.
\newblock {\em J. Comput. Phys.}, 231:3166--3180, 2012.

\bibitem{gargri00}
J.~Garcke and M.~Griebel.
\newblock On the computation of the eigenproblems of hydrogen and helium in
  strong magnetic and electric fields with the sparse grid combination
  technique.
\newblock {\em J. Comput. Phys.}, 165:694--716, 2000.

\bibitem{gauckler09}
L.~Gauckler.
\newblock Convergence of a split-step {H}ermite method for the
  {G}ross--{P}itaevskii equation.
\newblock {\em IMA J. Numer. Anal.}, 31:396--415, 2011.

\bibitem{hackbusch92}
W.~Hackbusch.
\newblock {\em Elliptic Differential Equations: Theory and Numerical
  Treatment}.
\newblock Springer Verlag, Berlin--Heidelberg--New York, 1992.

\bibitem{haireretal02}
E.~Hairer, Ch. Lubich, and G.~Wanner.
\newblock {\em Geometric Numerical Integration}.
\newblock Springer-Verlag, Berlin--Heidelberg--New York, 2002.

\bibitem{harlit34}
G.H. Hardy, J.E. Littlewood, and G.~Polya.
\newblock {\em Inequalities}.
\newblock Cambridge Univ. Press, Cambridge, 1934.

\bibitem{HocBon2014}
D.~Hochstuhl, {C.M.} Hinz, and M.~Bonitz.
\newblock Time-dependent multiconfiguration methods for the numerical
  simulation of photoionization processes of many-electron atoms.
\newblock {\em The European Physical Journal Special Topics}, 223:177--336,
  2014.

\bibitem{illneretal94}
R.~Illner, P.F. Zweifel, and H.~Lange.
\newblock Global existence, uniqueness and asymptotic behaviour of solutions of
  the {W}igner--{P}oisson and {S}chr{\"o}dinger--{P}oisson systems.
\newblock {\em Math. Methods Appl. Sci.}, 17:349--376, 1994.

\bibitem{makridakis96}
O.~Karakashian and Ch. Makridakis.
\newblock A space-time finite element method for the nonlinear
  {S}chr{\"o}dinger equation: the discontinuous {G}alerkin method.
\newblock {\em Math. Comp.}, 67:479--499, 1998.

\bibitem{katkyz15}
T.~Katsaounis and I.~Kyza.
\newblock A posteriori error control and adaptivity for {C}rank--{N}icolson
  finite element approximations for the linear {S}chr{\"o}dinger equation.
\newblock {\em Numer. Math.}, 129:55--90, 2015.

\bibitem{koclub08a}
O.~Koch and Ch. Lubich.
\newblock Analysis and time integration of the multi-configuration
  time-dependent {H}artree-{F}ock equations in electron dynamics.
\newblock ASC Report 4/2008, Inst.~for Anal.~and Sci.~Comput., Vienna Univ.~of
  Technology, 2008.

\bibitem{koclub08c}
O.~Koch and Ch. Lubich.
\newblock Variational splitting time integration of the {MCTDHF} equations in
  electron dynamics.
\newblock {\em IMA J. Numer. Anal.}, 31:379--395, 2011.

\bibitem{knth10a}
O.~Koch, Ch. Neuhauser, and M.~Thalhammer.
\newblock Error analysis of high-order splitting methods for nonlinear
  evolutionary {S}chr\"{o}dinger equations and application to the {MCTDHF}
  equations in electron dynamics.
\newblock {\em M2AN Math.~Model.~Numer.~Anal.}, 47:1265--1284, 2013.

\bibitem{kormann2012time}
K.~Kormann.
\newblock A time-space adaptive method for the {S}chr{\"o}dinger equation.
\newblock {\em Tach. Rep}, 23, 2012.

\bibitem{lubich07}
Ch. Lubich.
\newblock On splitting methods for {S}chr\"odinger--{P}oisson and cubic
  nonlinear {S}chr\"odinger equations.
\newblock {\em Math. Comp.}, 77:2141--2153, 2008.

\bibitem{macqui02}
R.~McLachlan and R.~Quispel.
\newblock Splitting methods.
\newblock {\em Acta Numer.}, 11:341--434, 2002.

\bibitem{miklavcic98}
M.~Miklav\v{c}i\v{c}.
\newblock {\em Applied Functional Analysis and Partial Differential Equations}.
\newblock World Scientific, Singapore, 1998.

\bibitem{motamarrietal12}
P.~Motamarri, M.~Nowak, K.~Leiter, J.~Knap, and V.~Gavini.
\newblock Higher-order adaptive finite-element methods for {K}ohn--{S}ham
  density functional theory.
\newblock {\em J. Comput. Phys.}, 231:6596--6621, 2012.

{\bibitem{olstyr14}
M.A. Olshanskii and E.E. Tyrtyshnikov.
\newblock {\em Iterative Methods for Linear Systems}.
\newblock SIAM, Philadelphia, PA, USA, 2014.
}

\bibitem{saad92s}
Y.~Saad.
\newblock Analysis of some {K}rylov subspace approximations to the matrix
  exponential operator.
\newblock {\em SIAM J. Numer. Anal.}, 29(1):209--228, 1992.

{\bibitem{saad03}
Y.~Saad.
\newblock {\em Iterative Methods for Sparse Linear Systems}.
\newblock SIAM, Philadelphia, PA, USA, 2nd edition, 2003.
}

\bibitem{schauer14}
V.~Schauer.
\newblock {\em Finite element based electronic structure calculations}.
\newblock Universit{\"a}t Stuttgart, Inst. f. Mechanik (Bauwesen), Lehrstuhl I,
  2014.

\bibitem{schiesser91}
W.E. Schiesser.
\newblock {\em The Numerical Method of Lines}.
\newblock Academic Press, San Diego, 1991.

\bibitem{sidje98}
R.~Sidje.
\newblock Expokit: A software package for computing matrix exponentials.
\newblock {\em ACM Trans. Math. Software}, 24(1):130--156, 1998.

\bibitem{bandrauk95}
H.~Yu and A.~Bandrauk.
\newblock Three-dimensional {C}artesian finite element method for the time
  dependent {S}chr{\"o}dinger equation of molecules in laser fields.
\newblock {\em J. Chem. Phys.}, 102, 1995.

\bibitem{zhouetal06}
Y.~Zhou, Y.~Saad, M.~Tiago, and J.~Chelikowsky.
\newblock Parallel self-consistent-field calculations via {C}hebyshev-filtered
  subspace acceleration.
\newblock {\em Phys. Rev. E}, 74:066704, 2006.

\end{thebibliography}

\end{document}